\numberwithin{equation}{section}
\newtheorem{theorem}{Theorem}[section]
\newtheorem{lemma}[theorem]{Lemma}
\newtheorem{proposition}[theorem]{Proposition}
\newtheorem{remark}[theorem]{Remark}
\newcommand{\bbE}{{\ensuremath{\mathbb E}} }
\newcommand{\bbP}{{\ensuremath{\mathbb P}} }
\newcommand{\cD}{{\ensuremath{\mathcal D}} }
\newcommand{\cH}{{\ensuremath{\mathcal H}} }
\newcommand{\cL}{{\ensuremath{\mathcal L}} }
\newcommand{\cN}{{\ensuremath{\mathcal N}} }
\newcommand{\cP}{{\ensuremath{\mathcal P}} }
\newcommand{\cR}{{\ensuremath{\mathcal R}} }
\DeclareMathSymbol{\leqslant}{\mathalpha}{AMSa}{"36} 
\DeclareMathSymbol{\geqslant}{\mathalpha}{AMSa}{"3E} 
\DeclareMathSymbol{\eset}{\mathalpha}{AMSb}{"3F}     
\newcommand{\dd}{\text{\rm d}}             
\newcommand{\R}{\mathbb{R}}
\newcommand{\Z}{\mathbb{Z}}
\newcommand{\N}{\mathbb{N}}
\newcommand{\PEfont}{\mathrm}
\DeclareMathOperator{\p}{\ensuremath{\PEfont P}}
\DeclareMathOperator{\e}{\ensuremath{\PEfont E}}
\DeclareMathOperator{\bbvar}{\ensuremath{\mathbb{V}ar}}
\newcommand{\ind}{{\sf 1}}
\renewcommand{\epsilon}{\varepsilon} 
\renewcommand{\theta}{\vartheta} 
\renewcommand{\rho}{\varrho} 
\renewcommand{\phi}{\varphi}
\def\rc{\mathrm{c}}
\def\dd{\mathrm{d}}
\def\ee{\mathrm{e}}
\def\pin{\mathrm{pin}}
\def\cop{\mathrm{cop}}
\def\per{\mathrm{per}}
\def\inv{\mathrm{inv}}
\def\ann{\mathrm{ann}}
\def\que{\mathrm{que}}
\def\fin{\mathrm{fin}}
\def\f{\textsc{f}}
\def\M{\mathrm{M}}
\newenvironment{myenumerate}{%
\renewcommand{\theenumi}{\arabic{enumi}}%
\renewcommand{\labelenumi}{{\rm(\theenumi)}}%
\begin{list}{\labelenumi}
	{%
	\setlength{\itemsep}{0.4em}%
	\setlength{\topsep}{0.5em}%
	\setlength\leftmargin{2.45em}%
	\setlength\labelwidth{2.05em}%
	\setlength{\labelsep}{0.4em}%
	\usecounter{enumi}%
	}%
	}%
{\end{list}
}
\newenvironment{myitemize}{%
\begin{list}{$\bullet$}%
 	{%
	\setlength{\itemsep}{0.4em}%
	\setlength{\topsep}{0.5em}%
	\setlength\leftmargin{2.45em}%
	\setlength\labelwidth{2.05em}%
	\setlength{\labelsep}{0.4em}%
	}%
	}%
{\end{list}}
\renewenvironment{itemize}{
\begin{myitemize}}%
{\end{myitemize}} 
\begin{document}

\title{Phase transitions for spatially extended pinning}

\author{Francesco Caravenna}
\address{Dipartimento di Matematica e Applicazioni\\
Universit\`a degli Studi di Milano-Bicocca\\
via Cozzi 53, 20125 Milano, Italy}
\email{francesco.caravenna@unimib.it}

\author{Frank den Hollander}
\address{Mathematisch instituut\\
Universiteit Leiden\\
Postbus 9512\\
2300 RA Leiden\\
The Netherlands}
\email{denholla@math.leidenuniv.nl}

\begin{abstract}
We consider a directed polymer of length $N$ interacting with a linear interface. The monomers 
carry i.i.d.\ random charges $(\omega_i)_{i=1}^N$ taking values in $\R$ with mean zero and 
variance one. Each monomer $i$ contributes an energy $(\beta\omega_i-h)\phi(S_i)$ to the 
interaction Hamiltonian, where $S_i \in \Z$ is the height of monomer $i$ with respect to the 
interface, $\phi\colon\,\Z\to [0,\infty)$ is the interaction potential, $\beta \in [0,\infty)$ is the 
inverse temperature, and $h \in \R$ is the charge bias parameter. The configurations 
of the polymer are weighted according to the Gibbs measure associated with the interaction 
Hamiltonian, where the reference measure is given by a Markov chain on $\Z$. We study 
both the \emph{quenched} and the \emph{annealed} free energy per monomer in the limit 
as $N\to\infty$. We show that each exhibits a phase transition along a critical curve in the 
$(\beta,h)$-plane, separating a \emph{localized phase} (where the polymer stays close to 
the interface) from a \emph{delocalized phase} (where the polymer wanders away from the 
interface). 
We derive variational formulas for the critical curves and we obtain upper and lower bounds on the 
quenched critical curve in terms of the annealed critical curve. In addition, for the special 
case where the reference measure is given by a \emph{Bessel random walk}, we derive 
the scaling limit of the annealed free energy as $\beta,h \downarrow 0$ in \emph{three different 
regimes} for the tail exponent of $\phi$. 

\medskip\noindent
$\ast$ Dedicated to the memory of Harry Kesten, who for over half a century generously 
showed the way in probability theory. 
\end{abstract}

\thanks{The research in this paper was supported by ERC Advanced Grant 267356 VARIS
and NWO Gravitation Grant 024.002.003 NETWORKS}
\keywords{Charged polymers, spatially extended pinning, free energy, critical curve, phase 
transition, large deviation principle, variational representation}
\subjclass[2010]{60K37; 82B44; 82B41}

\date{\today}

\maketitle


\section{Introduction}
\label{s:intro}

\subsection{Motivation}
\label{ss:motivation}

\emph{Homogeneous pinning} models, where a directed polymer receives a reward for
every monomer that hits an interface, have been the object of intense study. Both discrete 
and continuous models have been analysed in detail, and a full understanding is available 
of the free energy, the phase diagram and the typical polymer configurations as a function 
of the underlying model parameters. \emph{Disordered pinning} models, where the reward 
depends on random weights attached to the interface or where the shape of the interface 
is random itself, are much harder to analyse. Still, a lot of progress has been made in past 
years, in particular, the effect of the disorder on the scaling properties of the polymer has 
been elucidated to considerable depth. For an overview the reader is referred to the 
monographs by Giacomin~\cite{cf:Gi1}, \cite{cf:Gi2} and den Hollander~\cite{cf:dHo}, 
the review paper by Caravenna, Giacomin and Toninelli ~\cite{cf:CaGiTo}, and references 
therein.  

\emph{Spatially extended pinning}, where the interaction of the monomers depends 
on their distance to the interface, remains largely unexplored. For a continuum model
with an interaction potential that decays polynomially with the distance, pinning-like results have 
been obtained in Lacoin~\cite{cf:La}. A continuum model for which the interaction potential is non-zero 
only in a finite window around the interface was analysed in Cranston, Koralov, 
Molchanov and Vainberg~\cite{cf:CrKoMoVa}. The goal of the present paper is to 
investigate what happens for more general interaction potentials, both for discrete and 
for continuous models with disorder.   

The remainder of this section is organised as follows. In Section~\ref{ss:model} we define 
our model, which consists of a directed polymer carrying random charges that interact 
with a linear interface at a strength that depends on their distance. In Sections~\ref{ss:quefe} 
and \ref{ss:annfe} we look at the quenched, respectively, the annealed free energy, and
discuss the qualitative properties of the phase diagram. In Section~\ref{ss:Besselscal}
we recall certain scaling properties of the Bessel random walk and its relation to the 
Bessel process, both of which play an important role in our analysis. In Section~\ref{ss:thmsgen} 
we state three theorems when the underlying reference measure (describing the polymer 
without interaction) is a Markov chain. In Section~\ref{ss:thmsweak} we state three theorems 
for the limit of weak interaction when the reference measure is the Bessel random walk, and 
show that this limit is related to the continuum version of our model when the reference 
measure is the Bessel process. In Section~\ref{ss:disc} we place the theorems in their 
proper perspective. In Section~\ref{ss:open} we list some open problems and explain 
how the proofs of the theorems are organised.  


\subsection{The model}
\label{ss:model}

Let $\N_0=\N\cup\{0\}$. Our model has three ingredients:

\begin{itemize}
\item[(1)] 
An irreducible nearest-neighbour recurrent \emph{Markov chain} $S:=(S_n)_{n\in\N_0}$ 
on $\Z$ starting at $S_0=0$, with law $\p=\p_0$.
\item[(2)] 
An i.i.d.\ sequences of \emph{random charges} $\omega:=(\omega_n)_{n\in\N}$ on 
$\R$, with law $\bbP$.
\item[(3)] 
A non-negative function $\phi\colon \Z \to [0,\infty)$, playing the role of an \emph{interaction potential}, such that 
\begin{equation}
\label{eq:phidecay}
0 < \|\phi\|_\infty < \infty, \quad \lim_{x\to\infty} \phi(x) = 0, \quad \lim_{x\to-\infty} \phi(x) \text{ exists}.
\end{equation} 
\end{itemize}

\medskip\noindent
Our model is defined through the quenched partition function
\begin{equation}
\label{eq:model0}
Z_{N,\beta}^{\omega} := \e\left[ \ee^{\beta \sum_{n=1}^N \omega_n \phi(S_n)} \right],
\qquad N \in \N_0,
\end{equation}
which describes a directed polymer chain $n \mapsto (n,S_n)$ of length $N$ carrying 
charges $n \mapsto \omega_n$ that interact with a linear interface according to the 
interaction potential $x \mapsto \phi(x)$ at inverse temperature $\beta \in [0,\infty)$. 
Without loss of generality we may replace $\beta\omega_n$ by $\beta\omega_n - h$, 
with $h \in \R$ the charge bias parameter, and assume that $\omega$ is standardized, 
i.e.,
\begin{equation}
\label{eq:omass}
\bbE[\omega_n] = 0, \qquad \bbvar[\omega_n] = 1,
\end{equation}
after which \eqref{eq:model0} becomes
\begin{equation} 
\label{eq:model1}
Z_{N,\beta,h}^\omega := \e\left[ \ee^{\sum_{n=1}^N (\beta\omega_n - h) \phi(S_n)} \right],
\qquad N\in\N_0.
\end{equation}

Throughout the sequel we assume that
\begin{equation}
\label{eq:expmom}
\M(t) := \bbE\big[ \ee^{t\omega_1} \big] < \infty \qquad \forall\, t \in \R.
\end{equation}
Moreover, defining $\tau_1 := \inf\{n\in\N\colon\,S_n = 0\}$ to be the first return time of $S$ to 0, we 
assume that there exists an $\alpha \in [0,\infty)$ such that 
\begin{equation} 
\label{eq:astau}
\sum_{n\in\N} \p(\tau_1 = n) = 1, \qquad \p(\tau_1 = n)  = n^{-(1+\alpha)+o(1)}, 
\quad n \to \infty.
\end{equation}
Note that $\e(\tau_1)=\infty$ for all $\alpha \in (0,1)$.
If $S$ has period $2$, then the last asymptotics is assumed to run along $2\N$.

\begin{remark}{\bf [Bessel random walk]}
\label{rem:Bessel}
\rm An example of a Markov chain $S$ satisfying \eqref{eq:astau} that will receive special 
attention in this paper is the one with transition probabilities 
\begin{equation}
\label{eq:Besseltr}
\p(S_{n+1} = x \pm 1 \mid S_n = x) =: \tfrac12\big[1 \pm d(x)\big], \quad x \in \Z,
\end{equation}
where
\begin{equation}
\label{eq:dasymp}
d(x)= -d(-x), \quad x \in \Z, \qquad d(x) = -(\alpha-\tfrac12)\,x^{-1} + O(|x|^{-(1+\epsilon)}), 
\quad |x| \to \infty, 
\end{equation} 
for some $\alpha \in (0,1)$ and $\epsilon>0$. This choice, which is referred to as the 
\emph{Bessel random walk}, has a drift away from the origin ($\alpha<\tfrac12$) or 
towards the origin ($\alpha>\tfrac12$) that decays inversely proportional to the distance. The 
case $d(x) \equiv 0$ ($\alpha=\tfrac12$) corresponds to simple random walk. The Bessel random 
walk was studied by Lamperti~\cite{cf:Lam} and, more recently, by Alexander~\cite{cf:Al} 
(who actually considered the one-sided version $(|S_n|)_{n\in\N_0}$). It is known that \eqref{eq:astau} 
holds in a sharp form \cite[Theorem~2.1]{cf:Al}, namely, 
\begin{equation} \label{eq:astau1}
\p(\tau_1 = n) \sim c \, n^{-(1+\alpha)}, \qquad n\to\infty,
\end{equation} 
along $2\N$ for some $c \in (0,\infty)$. More refined asymptotics are available as well 
(see Section~\ref{ss:Besselscal} below).
\end{remark}

\begin{remark} \rm
The model defined in \eqref{eq:model1} provides a natural interpolation between the 
\emph{pinning model} and the \emph{copolymer model}, which correspond to the choices
\begin{equation}
\label{eq:phipincop}
\phi^\pin(x) = \ind_{\{x=0\}}, \qquad \phi^\cop(x) = \ind_{\{x \leq 0\}}.
\end{equation}
See Giacomin~\cite{cf:Gi1}, \cite{cf:Gi2} and den Hollander~\cite{cf:dHo} for details. 
Actually, in the copolymer model the interaction is via the bonds rather than the sites 
of the path, i.e., $\phi^\cop((x,y)) = \ind_{\{x+y \leq 0\}}$, but we will ignore such 
refinements. Moreover, the standard parametrisation of the disorder in the copolymer 
model is $-2\beta(\omega_n + h)$ rather than $\beta\omega_n-h$. Again, this is the 
same after a change of parameters. Our choice has the advantage that the free energy 
is jointly convex in $(\beta, h)$ and that the critical curve is non-negative (see 
Fig.~\ref{fig-critcurveque}).
\end{remark}


\subsection{The quenched free energy}
\label{ss:quefe}

The \emph{quenched free energy} is defined by
\begin{equation}
\label{eq:f}
\f^\que(\beta,h) := \lim_{N\to\infty} \frac{1}{N} \log Z_{N,\beta,h}^\omega 
\quad \text{$\bbP$-a.s.\ and in $L^1(\bbP)$}.
\end{equation}
For the \emph{constrained} partition function
\begin{equation} 
\label{eq:Zc}
Z_{N,\beta,h}^{\omega,\rc} := \e\left[ \ee^{\sum_{n=1}^N (\beta\omega_n - h)
\phi(S_n)} \, \ind_{\{S_N=0\}} \right], \qquad N\in\N_0,
\end{equation}
the existence of the limit 
\begin{equation}
\lim_{N\to\infty} \frac{1}{N} \log Z_{N,\beta,h}^{\omega,\rc} 
\quad \text{$\bbP$-a.s.\ and in $L^1(\bbP)$}
\end{equation}
follows by standard super-additivity arguments. Since $\phi$ is bounded and $\bbP$ has 
finite exponential moments (recall \eqref{eq:expmom}), the limit is finite. We will show in 
Appendix~\ref{app:compare} that 
\begin{equation}
\label{eq:compare}
\lim_{N\to\infty} \frac{1}{N} \log \frac{Z_{N,\beta,h}^{\omega,\rc}}{Z_{N,\beta,h}^\omega} = 0
\quad \text{$\bbP$-a.s.\ and in $L^1(\bbP)$},
\end{equation} 
so that \eqref{eq:f} follows.

By \eqref{eq:phidecay}, for every $\epsilon > 0$ there is an $M\in\N_0$ 
such that $0 \leq \phi(x) \leq \epsilon$ for $x \geq M$. Therefore
\begin{equation}
Z_{N,\beta,h}^{\omega,\rc} \geq \ee^{-\|\phi\|_\infty\sum_{n=1}^{M-1} 
(\beta|\omega_n|+|h|) - \epsilon \sum_{n=M}^N (\beta|\omega_n|+|h|)} \,
\p(S_n \geq M\,\,\forall\,M < n \leq N) \quad\forall\,M\in\N_0.
\end{equation}
We will show in Appendix~\ref{app:exceed} that, by \eqref{eq:astau}, 
\begin{equation}
\label{eq:exceed}
\lim_{N\to\infty} \frac{1}{N} \log \p(S_n \geq M\,\,\forall\,M < n \leq N) = 0 \quad\forall\,M\in\N_0,
\end{equation}
and so it follows that $\f^\mathrm{que}(\beta,h) \geq - \epsilon (\beta \bbE[|\omega_1|]+|h|)$. 
Since $\epsilon > 0$ is arbitrary, we obtain the important inequality
\begin{equation} 
\label{eq:fege0}
\f^\que(\beta,h) \geq 0  \qquad \forall\, \beta \in [0,\infty), h \in \R.
\end{equation}
It is therefore natural to define the two phases
\begin{equation}
\begin{aligned}
\cL^\que &:= \{(\beta,h)\colon\, \f^\que(\beta,h) > 0\},\\
\cD^\que &:= \{(\beta,h)\colon\, \f^\que(\beta,h) = 0\},
\end{aligned}
\end{equation}
which we refer to as the \emph{quenched localized phase}, respectively, the 
\emph{quenched delocalized phase}.

By \eqref{eq:f}, $(\beta,h) \mapsto \f^\que(\beta,h)$ is the pointwise limit of jointly convex 
functions. Moreover, $h \mapsto Z_{N,\beta,h}^\omega$ is non-increasing, so that 
$h \mapsto \f^\que(\beta,h)$ is non-increasing as well. Furthermore, $\beta \mapsto 
\bbE[\log Z_{N,\beta,h}^\omega]$ is convex and (by direct computation) has zero 
derivative at $\beta = 0$, so that $\beta \mapsto \f^\que(\beta,h)$ is non-decreasing on 
$[0,\infty)$.

\begin{figure}[htbp]
\begin{center}
\setlength{\unitlength}{0.35cm}
\begin{picture}(12,12)(0,-1.5)
\put(0,0){\line(12,0){12}}
\put(0,0){\line(0,8){8}}
{\thicklines
\qbezier(0,0)(5,0.5)(9,6.5)
}
\put(-.8,-.8){$0$}
\put(12.5,-0.2){$\beta$}
\put(-0.1,8.5){$h$}
\put(0,0){\circle*{.4}}
\put(7,2){$\cL^\que$}
\put(3,3){$\cD^\que$}
\put(7.5,7.2){$h_c^\que(\beta)$}
\end{picture}
\end{center}
\vspace{0cm}
\caption{\small Qualitative plot of $\beta \mapsto h_c^\que(\beta)$.}
\label{fig-critcurveque}
\end{figure}

From the monotonicity of $h \mapsto \f^\que(\beta,h)$ it follows that $\cL^\que$ 
and $\cD^\que$ are separated by a \emph{quenched critical curve} $h_c^\que
\colon\, [0,\infty) \to [0,\infty)$ whose graph is $\partial\cD^\que$ (see Fig.~\ref{fig-critcurveque}):
\begin{equation}
\begin{aligned}
\cL^\que &:= \{(\beta,h)\colon\, h < h_c^\que(\beta)\},\\
\cD^\que &:= \{(\beta,h)\colon\, h \geq h_c^\que(\beta)\}.
\end{aligned}
\end{equation}
From the convexity of $(\beta,h) \mapsto \f^\que(\beta,h)$ it follows that the lower level set 
$\cD^\que = \{(\beta,h)\colon\, \f^\que(\beta,h) \leq 0\}$ is convex. Since $\cD^\que$ is the 
upper graph of $h_c^\que$, it follows that $h_c^\que$ is convex and hence continuous. 
In Section~\ref{ss:thmsgen} we will see that $h_c^\que$ is finite everywhere. Since $S$ 
is recurrent, it follows from the theory of the homogeneous pinning model (Giacomin~\cite{cf:Gi1}, 
\cite{cf:Gi2}, den Hollander~\cite{cf:dHo}) that $\f^\que(0,h) > 0$ for $h < 0$ and $\f^\que(0,h) 
= 0$ for $h \geq 0$. Hence $h_c^\que(0) = 0$. (Note that $\phi(x) \geq \phi(x_*) \ind_{\{x_*\}}(x)$ 
for any $x_*\in\Z$ with $\phi(x_*) > 0$. Therefore we can dominate the quenched free energy 
for $\beta = 0$ by the free energy of the homogeneous pinning model with a strictly positive 
pinning reward.)

Finally, from the monotonicity of $\beta \mapsto \f^\que(\beta,h)$ on $[0,\infty)$ it follows 
that $\f^\que(\beta,h) \geq \f^\que(0,h) > 0$ for $h < 0$, so that $h_c^\que(\beta) \geq 
h_c^\que(0) = 0$ for $\beta \geq 0$. Since $h_c^\que$ is convex, this implies that $h_c^\que$ 
is non-decreasing, and is strictly increasing as soon as it leaves zero. In Section~\ref{ss:thmsgen} 
we will see that $h_c^\que(\beta) > 0$ for all $\beta>0$ (see Fig.~\ref{fig-critcurveque}).


\subsection{The annealed free energy}
\label{ss:annfe}

The \emph{annealed partition function} associated with \eqref{eq:model1} is
\begin{equation} 
\label{eq:Zann}
Z_{N, \beta, h}^\ann := \bbE [Z_{N,\beta,h}^\omega] 
= \e\left[ \ee^{\sum_{n=1}^N \psi_{\beta,h}(S_n)}\right], \qquad N\in\N_0,
\end{equation}
where
\begin{equation}
\label{eq:psi}
\psi_{\beta,h}(x) := \log \M(\beta \phi(x)) - h \phi(x).
\end{equation}
This is the partition function of the homogeneous pinning model with potential $\psi_{\beta,h}$. 
A delicate point is that $\psi_{\beta,h}$ does not have a sign: it may be a mixture of attractive 
and repulsive interactions. This comes from the fact that both the charge distribution and the 
interaction potential are general.

The \emph{annealed free energy} is defined by
\begin{equation}
\label{eq:fann}
\f^\ann(\beta,h) := \lim_{N\to\infty} \frac{1}{N} \log Z_{N, \beta, h}^\ann.
\end{equation}
For the \emph{constrained} partition function
\begin{equation} 
\label{eq:Zcann}
Z_{N,\beta,h}^{\ann,\rc} := \e\left[ \ee^{\sum_{n=1}^N \psi_{\beta,h}(S_n)} 
\, \ind_{\{S_N=0\}} \right], \qquad N\in\N_0,
\end{equation}
the existence of the limit 
\begin{equation}
\lim_{N\to\infty} \frac{1}{N} \log Z_{N,\beta,h}^{\ann,\rc} 
\end{equation}
again follows by standard super-additivity arguments. Since $\psi_{\beta,h}$ is bounded,
the limit is finite. The analogue of \eqref{eq:compare}, which will be proved in 
Appendix~\ref{app:compareann}, reads
\begin{equation}
\label{eq:compareann}
\lim_{N\to\infty} \frac{1}{N} \log \frac{Z_{N,\beta,h}^{\ann,\rc}}{Z_{N,\beta,h}^\ann} = 0,
\end{equation} 
so that \eqref{eq:fann} follows.

The \emph{annealed localized phase}, \emph{annealed delocalized phase} and \emph{annealed
critical curve} are defined as
\begin{equation}
\begin{aligned}
\cL^\ann &:= \{(\beta,h)\colon\, \f^\ann(\beta,h) > 0\} 
= \{(\beta,h)\colon\, h < h_c^\ann(\beta)\}, \\
\cD^\ann &:= \{(\beta,h)\colon\, \f^\ann(\beta,h) = 0\} 
= \{(\beta,h)\colon\, h \geq h_c^\ann(\beta)\}.
\end{aligned}
\end{equation}
As is clear from \eqref{eq:psi} and the fact that $\phi$ is non-negative, $\f^\ann(\beta,h)$ is 
non-increasing and convex as a function of $h$, and non-decreasing as a function of $\beta$ 
but not necessarily convex. Later we will see that nonetheless $\beta \mapsto h_c^\ann(\beta)$ 
has a shape that is qualitatively similar to that of $\beta \mapsto h_c^\que(\beta)$ (see 
Fig.~\ref{fig-critcurveann}).
 
An important property of the annealed free energy is that it provides an upper bound for 
the quenched free energy: by Jensen's inequality we have $\f^\que(\beta,h) \leq \f^\ann(\beta,h)$ 
for all $\beta \in [0,\infty)$ and $h\in\R$. Recalling \eqref{eq:fege0}, we therefore see that
\begin{equation}
\label{eq:annbound}
0 \leq h_c^\que(\beta) \leq h_c^\ann(\beta) \qquad \forall\, \beta \geq 0.
\end{equation}
Unlike for the pinning model and the copolymer model, for general potentials $\phi$
\emph{the annealed free energy and the annealed critical curve are not known explicitly}.

\begin{figure}[htbp]
\begin{center}
\setlength{\unitlength}{0.35cm}
\begin{picture}(12,12)(0,-1.5)
\put(0,0){\line(12,0){12}}
\put(0,0){\line(0,8){8}}
{\thicklines
\qbezier(0,0)(5,0.5)(9,6.5)
}
\put(-.8,-.8){$0$}
\put(12.5,-0.2){$\beta$}
\put(-0.1,8.5){$h$}
\put(0,0){\circle*{.4}}
\put(7,2){$\cL^\ann$}
\put(3,3){$\cD^\ann$}
\put(7.5,7.2){$h_c^\ann(\beta)$}
\end{picture}
\end{center}
\vspace{0cm}
\caption{\small Qualitative plot of $\beta \mapsto h_c^\ann(\beta)$.}
\label{fig-critcurveann}
\end{figure}
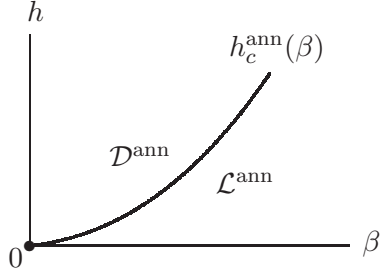


\subsection{Scaling properties of the Bessel random walk}
\label{ss:Besselscal}

Part of our results below involve the annealed free energy and the annealed critical 
curve associated with a Brownian version of the model, where the reference measure 
is based on the \emph{Bessel process} $X := (X_t)_{t \geq 0}$ of dimension $2(1-\alpha)$
(see \cite[Chapter~XI]{cf:RevYor}) defined by
\begin{equation}
\label{eq:BP}
\dd X_t = \dd B_t - \frac{\alpha-\tfrac12}{X_t}\,\dd t \quad \text{ on } [0,\infty)
\text{ with reflection at } 0,
\end{equation}
where $\alpha \in (0,1)$ and $(B_t)_{t \geq 0}$ is standard Brownian motion on $\R$.\footnote{Formally, 
the \emph{squared Bessel process} $(Y_t)_{t \geq 0}$ is defined by $\dd Y_t = 2\sqrt{Y_t}\,\dd B_t 
+ 2(1-\alpha)\,\dd t$ on $[0,\infty)$ with reflection at $0$, and $(X_t)_{t \geq 0}$ is defined 
by setting $X_t=\sqrt{Y_t}$.} Informally, $X$ makes infrequent visits to $0$ when $\alpha<\tfrac12$ 
and frequent visits to $0$ when $\alpha>\tfrac12$. The choice $\alpha=\tfrac12$ corresponds 
to reflected Brownian motion, i.e., $X_t = |B_t|$ with $(B_t)_{t \ge 0}$ standard Brownian motion.
We write $\hat{\p}_x$ to denote the law of $X$ given $X_0 = x$. When $x=0$, we simply write 
$\hat{\p}=\hat{\p}_0$.

For the semigroup $g_t(x,y) := \hat{\p}_x(X_t \in \dd y )/\dd y$ of $X$ there is an explicit formula, 
namely,
\begin{equation} 
\label{eq:gtxy}
g_t(x,y) = \frac{x^\alpha \, y^{1-\alpha}}{t} \, \ee^{-\frac{x^2 + y^2}{2t}}
\, J_{-\alpha}\bigg(\frac{xy}{t}\bigg), \qquad x, y \in (0,\infty), \, t > 0,
\end{equation}
where $J_{-\alpha}(z) = \sum_{m\in\N_0} \frac{(-1)^m}{m! \, \Gamma(m+1-\alpha)} (\frac{z}{2})^{2m-\alpha}$ 
is the Bessel function of index $-\alpha$. Since
\begin{equation}
J_{-\alpha}(z) \sim \frac{2^\alpha}{\Gamma(1-\alpha)} z^{-\alpha}, \qquad z \downarrow 0,
\end{equation}
we also have the explicit formula
\begin{equation} 
\label{eq:gt}
g_t(y) := g_t(0,y) := \frac{\hat{\p}(X_t \in \dd y)}{\dd y} = \frac{2^\alpha}{\Gamma(1-\alpha)}
\,\frac{y^{1-2\alpha}}{t^{1-\alpha}} \, \ee^{-\frac{y^2}{2t}}, \qquad y \in [0,\infty), \, t > 0.
\end{equation}
It follows that
\begin{equation} 
\label{eq:PXepsilon}
\hat\p(X_t < \epsilon) \le \frac{\epsilon^{2(1-\alpha)}}{c_\alpha \, t^{1-\alpha}}, \quad t, \epsilon > 0,
\qquad c_\alpha := \frac{\Gamma(2-\alpha)}{2^{\alpha-1}},
\end{equation}
and this inequality becomes \emph{sharp} as $\epsilon \downarrow 0$, namely,
\begin{equation}
\label{eq:hatgt0}
\lim_{\epsilon \downarrow 0} 
\frac{c_\alpha}{\epsilon^{2(1-\alpha)}} \, \hat{\p}_x(X_t < \epsilon)
= \lim_{\epsilon \downarrow 0} 
\frac{c_\alpha}{\epsilon^{2(1-\alpha)}}  \, \int_0^\epsilon g_t(x,y) \, \dd y
= \hat g_t(x,0), \qquad x \geq 0,
\end{equation}
where
\begin{equation}
\label{eq:hatgt}
\hat g_t(x,0) := \frac{1}{t^{1-\alpha}} \, \ee^{-\frac{x^2}{2t}}, \qquad
x \in [0,\infty), \, t > 0.
\end{equation}

The \emph{local time} of $X$ at $0$ up to time $T \ge 0$ is defined as the following limit
in probability:
\begin{equation}
\label{eq:hatLdef}
\hat{L}_T(0) := \lim_{\epsilon \downarrow 0} \frac{c_\alpha}{\epsilon^{2(1-\alpha)}} 
\int_0^T \dd t\,\,1_{(0,\epsilon)}(X_t). 
\end{equation}
We will informally write
\begin{equation}
\label{eq:hatdelta}
\hat{L}_T(0) =: \int_0^T \dd t\,\hat\delta_0(X_t).
\end{equation}
Note that $\hat{\e}[\hat{L}_T(0)] = \int_0^T \frac{\dd t}{t^{1-\alpha}}= \int_0^T \hat 
g_t(0,0) \, \dd t = \frac{1}{\alpha} \, T^{\alpha}$. 

The relation of the Bessel process with the Bessel random walk defined in Remark~\ref{rem:Bessel} is 
that the latter satisfies the invariance principle (see Lamperti~\cite{cf:Lam})
\begin{equation}
\label{eq:invprin}
\big(|S_{\lfloor Nt \rfloor}|/\sqrt{N}\,\big)_{t\geq 0} \Longrightarrow (X_t)_{t \geq 0}, 
\qquad N\to\infty.
\end{equation}
Write $\p_{k'}(\,\cdot\,) := \p(\,\cdot\,|\,S_0 = k')$ to denote the law of the Bessel random
walk started at $k'\in\Z$, so that $\p = \p_0$. Local limit theorems for the transition probabilities 
$\p_{k'}(|S_n|=k)$ of the Bessel random walk have been established in \cite[Theorem~2.4]{cf:Al}. 
The following formulas hold in the limit as $n\to\infty$, uniformly in a specified range of $k, k' \in\N_0$. 
We assume that $k-k'$ has the same parity as $n$, i.e., $k-k'-n$ is even, because otherwise 
$\p_{k'}(|S_n|=k)$=0.
\begin{itemize}
\item 
\emph{Low ending heights:} 
For any $\epsilon > 0$ and $\bar k_n = o(\sqrt{n}\,)$, uniformly in $0 \le k' \le \sqrt{n}/\epsilon$ and 
$0 \le k \le \bar k_n$,
\begin{equation}
\label{eq:Besseltail}
\p_{k'}(|S_n| = k) \sim  \frac{2\,c(k)}{n^{1-\alpha}}\,
\hat g_1\bigg(\frac{k'}{\sqrt{n}},0\bigg) \,
\ind_{\{\text{$k-k'-n$ is even}\}}, \qquad n \to \infty,
\end{equation}
where $\hat g_1$ is defined in \eqref{eq:hatgt} and $c\colon\, \N_0 \to (0,\infty)$ is an explicit function 
(which depends on the function $d\colon\,\Z \to \R$ in \eqref{eq:Besseltr}) such that
\begin{equation}
\label{eq:ctildeasymp}
c(k) \sim \frac{2^{\alpha}}{\Gamma(1-\alpha)}\,k^{1-2\alpha}, \qquad k \to \infty.
\end{equation}
In case of a low starting height $k' = o(\sqrt{n}\,)$, \eqref{eq:Besseltail} simplifies because 
$\hat g_1(\frac{k'}{\sqrt{n}},0) \sim \hat g_1(0,0) = 1$.
\item 
\emph{Intermediate ending height:} 
For any $\epsilon > 0$, uniformly in $0 \le k' \le \sqrt{n}/\epsilon$ and $\epsilon\sqrt{n} \le k 
\le \sqrt{n}/\epsilon$,
\begin{equation} 
\label{eq:llt}
\p_{k'}(|S_n| = k) 
\sim \frac{2}{\sqrt{n}} \, g_1\bigg(\frac{k'}{\sqrt{n}, }\frac{k}{\sqrt{n}}\bigg)\,
\ind_{\{\text{$k-k'-n$ is even}\}}, 
\qquad n \to \infty,
\end{equation}
where $g_1$ is the density in \eqref{eq:gtxy} at time $1$. In case of a low starting height 
$k' = o(\sqrt{n}\,)$, \eqref{eq:Besseltail} simplifies because $g_1(\frac{k'}{\sqrt{n}},\frac{k}{\sqrt{n}}) 
\sim g_1(0,\frac{k}{\sqrt{n}}) = g_1(\frac{k}{\sqrt{n}})$ reduces to \eqref{eq:gt}.
\item
\emph{High ending heights:} There exists a $C = C(\alpha) < \infty$ such that, for all $k \ge \sqrt{n}$,
\begin{equation}
\label{eq:lltub}
\p(|S_n| = k) \le  \frac{C}{\sqrt{n}} \, \ee^{-\frac{k^2}{8n}}\,\ind_{\{\text{$k-n$ is even}\}}.
\end{equation}
\end{itemize}
It follows from \eqref{eq:Besseltail}--\eqref{eq:lltub} that, for some $C < \infty$,
\begin{equation} 
\label{eq:globalUB}
\forall\, n \in \N \quad \forall\, k \in \Z\colon \qquad
\p(|S_n| = k) \le C \, \frac{(1+|k|)^{1-2\alpha}}{n^{1-\alpha}} \, e^{-\frac{k^2}{8n}}
\,\ind_{\{\text{$k-n$ is even}\}}.
\end{equation}
This uniform bound will be needed to control scaling computations.

\begin{remark}
{\rm Equations \eqref{eq:Besseltail} and \eqref{eq:llt} for $k' = 0$ are proved in \cite[Theorem~2.4]{cf:Al}, while 
the case $k' \neq 0$ follows via the relation $\p_{k'}(|S_n| = k) = \sum_{m=1}^n \p_{k'}(\tau_1 = m) \, \p_0(|S_{n-m}|=k) 
+ O(\p_{k'}(\tau_1 > n))$ (see also the estimates on $\p_{k'}(\tau_1 = m)$ provided in \cite[Theorem~2.2]{cf:Al}).
We further point out the duality relation $\p_{k'}(|S_n| = k) = \frac{1+d(k')}{1+d(k)} \frac{\lambda_{k'}}{\lambda_k} 
\, \p_k(|S_n| = k')$, where $\lambda_k := \prod_{x=1}^k \frac{1-d(x)}{1+d(x)}$.}
\end{remark}


\subsection{General properties}
\label{ss:thmsgen}

Our first set of theorems concerns the quenched and the annealed critical curve.

\begin{theorem}
\label{thm:varrep}
$\beta \mapsto h_c^\que(\beta)$ and $\beta \mapsto h_c^\ann(\beta)$ can be characterized 
in terms of variational formulas (see Theorems~{\rm \ref{thm:annchar}--\ref{thm:quechar}} 
below).  
\end{theorem}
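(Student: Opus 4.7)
The plan is to obtain separate variational representations for the annealed and the quenched free energies and then read off the corresponding formulas for the critical curves from the level sets $\{\f^\ann=0\}$ and $\{\f^\que=0\}$, using the characterizations \eqref{eq:annbound} and the definitions of $h_c^\ann$, $h_c^\que$ as the boundaries of these level sets.

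For the annealed curve I would start from \eqref{eq:Zann}--\eqref{eq:psi}, which exhibits $Z_{N,\beta,h}^\ann$ as a Feynman--Kac functional of $S$ with bounded (but sign-indefinite) potential $\psi_{\beta,h}$. Since $S$ is only null-recurrent in general, a direct Donsker--Varadhan level-2 formula for the empirical measure is useless (the rate function vanishes identically on shift-invariant $\mu$). Instead I would pass through the constrained partition function $Z_{N,\beta,h}^{\ann,\rc}$, which by \eqref{eq:compareann} carries the same free energy, and decompose it along the returns of $S$ to $0$:
\[
Z_{N,\beta,h}^{\ann,\rc} = \sum_{k\ge 1}\,\sumtwo{0=n_0<n_1<\cdots<n_k=N}{}\,\prod_{i=1}^{k} K_{\beta,h}(n_i-n_{i-1}),
\]
where $K_{\beta,h}(n) := \e\bigl[\ee^{\sum_{j=1}^{n-1}\psi_{\beta,h}(S_j)}\,\ind_{\{\tau_1=n\}}\bigr]$ is a single-excursion weight. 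Standard renewal arguments then identify $\f^\ann(\beta,h)$ with the unique $\lambda\ge 0$ solving $\sum_n \ee^{-\lambda n} K_{\beta,h}(n) = 1$ whenever the sum at $\lambda = 0$ exceeds $1$, and with $0$ otherwise. This yields the ``implicit'' annealed formula, and $K_{\beta,h}(n)$ can itself be rewritten as the exponential of an LDP functional on excursion paths (combining the Markov chain's path measure with the tilt by $\psi_{\beta,h}$), which upgrades the renewal criterion to an honest variational formula over excursion measures.

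For the quenched curve, the plan is to apply the empirical-process LDP framework of Birkner--Greven--den Hollander. View each return time $n_i - n_{i-1}$ together with the excursion shape and the vector of charges $(\omega_{n_{i-1}+1},\ldots,\omega_{n_i})$ sampled during it as a single ``word''; the polymer then becomes a renewal process on an enlarged alphabet, and the i.i.d.\ structure of $\omega$ lets one invoke a quenched LDP for the empirical distribution of words sampled from an i.i.d.\ letter stream. The resulting representation has the schematic form
\[
\f^\que(\beta,h) = \sup_{Q}\,\bigl\{\Phi_{\beta,h}(Q) - H^\que\bigl(Q\,\big|\,q^{\otimes\N}\bigr)\bigr\},
\]
where $Q$ ranges over shift-invariant laws on infinite word sequences, $q$ is the reference excursion-word law induced by $S$ and $\bbP$, $\Phi_{\beta,h}$ is the average reward per word, and $H^\que$ is the quenched specific relative entropy. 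Equating this supremum to $0$ gives the variational formula for $h_c^\que(\beta)$, and the contrast with the annealed formula (where $H^\que$ is replaced by its annealed counterpart $H^\ann$) will drive the bounds on $h_c^\que$ versus $h_c^\ann$.

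The main obstacle will be on the quenched side. Because the excursions have heavy-tailed lengths (exponent $\alpha\in(0,1)$ in \eqref{eq:astau}) and unbounded height ranges, the alphabet of words is non-compact in every natural topology. Identifying the correct rate function $H^\que$ and proving the variational formula therefore requires careful truncation of both the excursion length and the charge contribution $\sum_n \psi_{\beta,h}(S_n)$ within a single excursion, together with concentration estimates ensuring that the truncated and full variational problems have the same supremum in the limit. Making the matching upper and lower bounds meet at the same variational expression — rather than at an inequality pair — is the delicate point that the later Theorems~\ref{thm:annchar}--\ref{thm:quechar} need to settle.
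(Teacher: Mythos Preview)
Your proposal is correct and follows essentially the same route as the paper: the word-letter framework with excursions as words carrying their disorder, the Birkner--Greven--den Hollander quenched LDP, Varadhan's lemma, and truncation to handle the non-compact word space. You also correctly identify the main technical obstacle (unbounded word lengths and unbounded $\Phi_{\beta,h}$) and the cure (truncation plus concentration estimates, which the paper imports from \cite{cf:BodHoOp}).

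One small difference in presentation is worth noting. For the annealed formula you go first through the renewal decomposition and the criterion $\sum_n K_{\beta,h}(n)>1$, and only then propose to recast it variationally. The paper does this in the reverse order: it states the annealed variational formula directly via the standard i.i.d.\ process-level LDP (Proposition~\ref{aLDP}) and Varadhan's lemma, and only afterwards (Section~\ref{ss:solvarcrit}--\ref{ss:applvarcrit}) reduces the supremum over $Q\in\cP^\inv(\tilde\Gamma^\N)$ to a supremum over single-word marginals $q$ and thence to the renewal criterion \eqref{eq:condloc}. Both routes are equivalent --- the reduction works because $I^\ann(Q)=H(Q\mid q_0^{\otimes\N})$ is minimized at product measures for a given marginal --- but the paper's order has the advantage that the annealed and quenched formulas are stated in parallel form (same supremum, different rate function), which is exactly what is exploited in the proof of Theorem~\ref{thm:boundshc}.
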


\begin{theorem}
\label{thm:boundshc}
For every $\beta \geq 0$,
\begin{equation}
\label{eq:boundshc}
(1+\alpha)\, h_c^\ann \bigg(\frac{\beta}{1+\alpha} \bigg) 
\leq h_c^\que(\beta) \leq h_c^\ann(\beta).
\end{equation}
\end{theorem}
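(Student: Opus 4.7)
The upper bound $h_c^\que(\beta) \leq h_c^\ann(\beta)$ is already the content of \eqref{eq:annbound}, obtained by Jensen's inequality applied inside the logarithm of $Z_{N,\beta,h}^\omega$: $\bbE[\log Z_{N,\beta,h}^\omega] \leq \log \bbE[Z_{N,\beta,h}^\omega] = \log Z_{N,\beta,h}^\ann$, whence $\f^\que \leq \f^\ann$ and the inequality on critical curves follows.

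For the lower bound $(1+\alpha)\, h_c^\ann(\beta/(1+\alpha)) \leq h_c^\que(\beta)$, my plan is to set $\gamma := 1/(1+\alpha)\in(0,1)$ and prove the equivalent localization implication
$$\f^\ann(\gamma\beta,\gamma h) > 0 \ \Longrightarrow\ \f^\que(\beta,h) > 0,$$
which says $(\gamma\beta, \gamma h) \in \cL^\ann$ implies $(\beta, h) \in \cL^\que$, and therefore $(1+\alpha) h_c^\ann(\beta/(1+\alpha)) \leq h_c^\que(\beta)$ after rearranging. I would invoke the variational formulas from Theorem~\ref{thm:varrep} (i.e., Theorems~\ref{thm:annchar}--\ref{thm:quechar} below), which present $\f^\ann$ and $\f^\que$ as suprema of the common energy functional
$$J(\beta,h,\mu) := \int (\beta\omega - h)\, \phi(s)\, d\mu(\omega, s)$$
over shift-invariant probability measures $\mu$ on $(\omega,s)$-trajectories, minus a suitable Donsker--Varadhan-type rate function; the quenched variational problem differs from the annealed one by the additional constraint that the $\omega$-marginal of $\mu$ equals $\bbP$.

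Starting from a near-optimizer $\mu^*$ of the annealed formula at $(\gamma\beta,\gamma h)$ with strictly positive value, I would construct a quenched-admissible test measure $\nu^*$ at $(\beta,h)$ via a coarse-graining procedure: the polymer returns to $0$ only at the beginnings of rare disorder stretches whose empirical law matches the $\omega$-marginal of $\mu^*$, and within each such stretch reproduces the polymer behavior prescribed by the conditional law $\mu^*(\cdot\mid\omega)$. By the linearity of $J$ in $(\beta,h)$ one has $J(\beta,h,\nu^*) = (1+\alpha)\, J(\gamma\beta,\gamma h,\mu^*)$. The crucial inequality to establish is
$$I^\que(\nu^*) \leq (1+\alpha)\, I^\ann(\mu^*),$$
in which the factor $(1+\alpha)$ emerges because the polynomial tail~\eqref{eq:astau} of the return time $\tau_1$ makes the cost of forcing sufficiently long excursions (to reach favorable stretches of density $\ee^{-L\, H(\mu^{*,\omega}\mid\bbP)}$) grow exactly like $(1+\alpha)$ times the logarithm of the excursion length, matching the entropy cost of locating these stretches. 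Combining gives
$$\f^\que(\beta,h) \geq J(\beta,h,\nu^*) - I^\que(\nu^*) \geq (1+\alpha)\bigl[J(\gamma\beta,\gamma h,\mu^*) - I^\ann(\mu^*)\bigr] = (1+\alpha)\,\f^\ann(\gamma\beta,\gamma h) > 0.$$

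The main obstacle is making the coarse-graining construction of $\nu^*$ rigorous within the abstract variational framework of Theorem~\ref{thm:varrep}, and verifying the sharp rate bound $I^\que(\nu^*) \leq (1+\alpha)\, I^\ann(\mu^*)$. This is precisely the step where the exponent $\alpha$ from the return-time tail~\eqref{eq:astau} plays its essential role: the $(1+\alpha)$ factor comes out exactly because of the polynomial decay of $\p(\tau_1 = n)$, and the balance between the entropic cost of locating rare favorable stretches and the polymer's cost of reaching them would fail for any other power.
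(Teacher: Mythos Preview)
Your upper bound is fine and matches the paper.

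For the lower bound, your overall target implication $\f^\ann(\gamma\beta,\gamma h)>0 \Rightarrow \f^\que(\beta,h)>0$ with $\gamma=1/(1+\alpha)$ is exactly right, but your proposed mechanism rests on a misreading of the variational formulas. In the paper's framework (Theorems~\ref{thm:annchar}--\ref{thm:quechar}), the quenched and annealed suprema run over the \emph{same} set of shift-invariant measures $Q\in\cP^\inv(\tilde\Gamma^\N)$ with $m_Q<\infty$, $I^\ann(Q)<\infty$; there is no additional marginal constraint in the quenched case. The distinction is entirely encoded in the rate functions $I^\ann$ versus $I^\que$, where $I^\que$ is given explicitly by Proposition~\ref{qLDP} as
\[
I^\que(Q)=H(Q\mid q_0^{\otimes\N})+\alpha\,m_Q\,H(\Psi_Q\mid\nu^{\otimes\N}),\qquad m_Q<\infty.
\]
So your plan of building a special ``quenched-admissible'' $\nu^*$ from an annealed near-optimizer $\mu^*$ via coarse-graining, and then proving $I^\que(\nu^*)\le(1+\alpha)I^\ann(\mu^*)$ for this \emph{pair} of measures, is attacking the wrong object.

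The paper's argument is far more direct. One rewrites
\[
I^\que(Q)=(1+\alpha)\,I^\ann(Q)-\alpha\,R(Q),\qquad R(Q):=H(Q\mid q_0^{\otimes\N})-m_Q\,H(\Psi_Q\mid\nu^{\otimes\N}),
\]
and uses the fact (from \cite{cf:BiGrdHo}) that $R(Q)\ge 0$ is a concatenation entropy. This yields the pointwise inequality $I^\que(Q)\le(1+\alpha)I^\ann(Q)$ for \emph{every} $Q$ with $m_Q<\infty$, so no construction is needed. Plugging into \eqref{eq:varcrit} gives the sufficient criterion
\[
\sup_Q\Big\{\textstyle\int_{\tilde\Gamma}\Phi_{\beta,h}\,\dd(\tilde\pi_1Q)-(1+\alpha)I^\ann(Q)\Big\}>0 \ \Longrightarrow\ \f^\que(\beta,h)>0,
\]
and since $\Phi_{\beta,h}$ is linear in $(\beta,h)$, the left-hand side is exactly the annealed criterion \eqref{eq:varcritann} evaluated at $(\beta/(1+\alpha),h/(1+\alpha))$. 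The factor $1+\alpha$ thus emerges algebraically from the explicit quenched LDP rate function, not from a balance of strategy costs as you sketch. Your intuition about where $\alpha$ enters is morally right---it does come from the return-time tail exponent via the quenched LDP---but that work has already been done inside Proposition~\ref{qLDP}, and you should use it rather than reinvent it.
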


\noindent
As already noted in \eqref{eq:annbound}, the second inequality in \eqref{eq:boundshc} is an 
immediate consequence of Jensen's inequality applied to \eqref{eq:Zc} and \eqref{eq:Zcann}. The first
inequality in \eqref{eq:boundshc}, which is known as the Monthus-Bodineau-Giacomin bound, was previously 
shown to hold for the copolymer model~\cite{cf:BG}, \cite{cf:BodHoOp}. We show that it holds for the general 
class of potentials satisfying \eqref{eq:phidecay}.

In Section~\ref{ss:applvarcrit} we will show that $F^\ann(\beta,0)>0$ for all $\beta>0$. This 
implies that $h_c^\ann(\beta)>0$ for all $\beta>0$, which via \eqref{eq:boundshc} settles the 
claim made at the end of Section~\ref{ss:quefe} that $h_c^\que(\beta)>0$ for all $\beta>0$.


\subsection{Scaling for weak interaction}
\label{ss:thmsweak}

Our second set of theorems looks at the scaling of the annealed free energy in the limit of weak interaction, 
for the special case where $S$ is the Bessel random walk with parameter $\alpha \in (0,1)$ defined in 
Remark~\ref{rem:Bessel} and the interaction potential $\phi$ is symmetric:
$\phi(-x) = \phi(x)$ for all $x\in\Z$. We consider \emph{three different regimes} for the tail 
behaviour of $\phi$, namely, 
\begin{equation}
\label{eq:phiscal}
\lim_{|x|\to\infty} |x|^\theta\phi(x) = c \in (0,\infty)
\end{equation}
with 
\begin{equation}
\label{eq:regimes}
\begin{aligned}
&\theta \in (0,1-\alpha),\\
&\theta \in (1-\alpha,2(1-\alpha)),\\
&\theta \in (2(1-\alpha),\infty).
\end{aligned}
\end{equation}
  
\begin{theorem}
\label{thm:weak1}
Suppose that $\alpha \in (0,1)$ and $\theta \in (0,1-\alpha)$. For every $\hat\beta \in (0,\infty)$ and 
$\hat{h} \in (0,\infty)$, 
\begin{equation}
\label{eq:convweak1}
\lim_{\delta \downarrow 0}\, \delta^{-1}\,F^\ann\left(\hat\beta\,\delta^{(1-\theta)/2},
\hat{h}\,\delta^{(2-\theta)/2}\right) = \hat{F}^\ann(\hat\beta,\hat{h}),
\end{equation}
where 
\begin{equation}\label{eq:hatF1}
\hat{F}^\ann(\hat\beta,\hat{h}) := \lim_{T\to\infty} \frac{1}{T} \log \hat{\e}\left[\exp\left(
\tfrac12\hat\beta^2c^2 \int_0^T \dd t\,\,X_t^{-2\theta}
-\hat{h}c \int_0^T \dd t\,\,X_t^{-\theta}\right)\right]
\end{equation}
with $c$ the constant in \eqref{eq:phiscal}.
\end{theorem}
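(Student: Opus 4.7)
The strategy is to reduce the annealed partition function to a Gaussian path functional via Taylor expansion, and then transport the result to the continuum using the Bessel invariance principle \eqref{eq:invprin}. Since $\bbE[\omega_1]=0$ and $\bbvar[\omega_1]=1$, a second-order expansion combined with \eqref{eq:expmom} and $\|\phi\|_\infty<\infty$ gives, uniformly in $x$,
\[
\psi_{\beta,h}(x) = \tfrac12 \beta^2 \phi(x)^2 - h\phi(x) + O\bigl(\beta^3 \phi(x)^3\bigr).
\]
Plugging in $\beta=\hat\beta\delta^{(1-\theta)/2}$ and $h=\hat h\delta^{(2-\theta)/2}$, and setting $N=\lfloor T/\delta\rfloor$ for an auxiliary fixed $T>0$, the tail bound $\phi(x)\le C(1+|x|)^{-\theta}$ together with the diffusive size $|S_n|\asymp\sqrt{n}$ shows that the cumulative cubic error in the exponent is $O_T(\delta^{(1-\theta)/2})=o(1)$. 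Hence it suffices to analyse the purely Gaussian proxy
\[
\tilde Z_N := \e\!\left[\exp\!\left(\tfrac12\beta^2 \textstyle\sum_{n=1}^N \phi(S_n)^2 - h\sum_{n=1}^N \phi(S_n)\right)\right].
\]

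Next, using $|S_{\lfloor Nt\rfloor}|/\sqrt{N}\Rightarrow X_t$ and $\phi(x)\sim c|x|^{-\theta}$, a Riemann-sum computation combined with the Bessel scaling $(X_{Ts})_s\stackrel{d}{=}(\sqrt{T}\,X_s)_s$ gives the pathwise limit
\[
\tfrac12\beta^2 \!\sum_{n=1}^N \phi(S_n)^2 - h\!\sum_{n=1}^N \phi(S_n) \;\Longrightarrow\; \tfrac12\hat\beta^2 c^2\!\int_0^T X_t^{-2\theta}\,\dd t - \hat h c\!\int_0^T X_t^{-\theta}\,\dd t,
\]
while the prefactor $1/N$ in the free energy becomes $\delta/T$, so that $\delta^{-1}\cdot \frac{1}{N}\log\tilde Z_N = \frac{1}{T}\log\tilde Z_N$, exactly matching the right-hand side of \eqref{eq:hatF1}. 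The hypothesis $\theta\in(0,1-\alpha)$ is what makes the limit non-degenerate: by \eqref{eq:gt} one has $\hat\e[X_t^{-q}]<\infty$ iff $q<2(1-\alpha)$, and the local-time asymptotic \eqref{eq:hatLdef} ensures $\int_0^T X_t^{-2\theta}\,\dd t<\infty$ almost surely iff $2\theta<2(1-\alpha)$.

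The remaining—and most delicate—step is to lift distributional convergence of the exponent to convergence of expected exponentials. For fixed $T>0$ I would prove uniform-in-$\delta$ integrability of $\exp(\lambda\beta^2\sum_n\phi(S_n)^2)$ for some $\lambda>\tfrac12$ using the global local-limit bound \eqref{eq:globalUB}, and couple $S$ with $X$ via Skorokhod's representation (or use the sharp local limit theorems \eqref{eq:Besseltail}--\eqref{eq:llt} directly) to conclude, for each fixed $T>0$,
\[
\lim_{\delta\downarrow 0}\frac{1}{T}\log Z^\ann_{\lfloor T/\delta\rfloor}(\beta_\delta,h_\delta) = \frac{1}{T}\log \hat\e\!\left[\exp\!\Bigl(\tfrac12\hat\beta^2 c^2 \textstyle\int_0^T X_t^{-2\theta}\dd t - \hat h c \int_0^T X_t^{-\theta}\dd t\Bigr)\right].
\]
It remains to interchange $\delta\downarrow 0$ with $T\to\infty$, each of which defines a free energy on its own side. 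I would address this via super-multiplicativity of the constrained annealed partition functions $Z^{\ann,\rc}$ and their continuum counterparts, which expresses both $F^\ann$ and $\hat F^\ann$ as suprema of their finite-horizon versions and thus supplies one inequality for free. The matching inequality requires uniform-in-$T$ control of the finite-$T$ free energies and is, I expect, the main obstacle: the singularity of $X_t^{-2\theta}$ near $X_t=0$—sitting exactly at the edge of the regime $\theta<1-\alpha$—is what drives the critical behaviour and must be handled with care, presumably by excising small-$X$ excursions and estimating their contribution via the local-time bound \eqref{eq:PXepsilon}.
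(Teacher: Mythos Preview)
Your outline is broadly correct and identifies the right two tasks: (i) convergence of the finite-horizon partition function $Z^\ann_{TN,\beta_N,h_N}\to\hat Z^\ann_{T,\hat\beta,\hat h}$ for fixed $T$, and (ii) the interchange of $N\to\infty$ and $T\to\infty$. The paper, however, executes (i) by a different mechanism. Rather than proving weak convergence of the random exponent and then lifting via uniform integrability, the paper expands the partition function as a polynomial chaos,
\[
Z^\ann_{TN,\beta_N,h_N}=1+\sum_{k\ge 1}C_{TN,k},\qquad
C_{TN,k}=\sum_{1\le n_1<\cdots<n_k\le TN}\e\Big[\prod_{\ell=1}^k\chi_{\beta_N,h_N}(|S_{n_\ell}|)\Big],
\]
with $\chi=\ee^{\psi}-1$, and proves term-by-term convergence $C_{TN,k}\to\hat C_{T,k}$ together with a uniform tail bound $\sum_{k>\bar K}|C_{TN,k}|<\epsilon$. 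The $k$-th term is handled by inserting the local limit theorems \eqref{eq:Besseltail}--\eqref{eq:llt} directly into the $k$-fold sum, after restricting each space variable to the diffusive window $\epsilon\sqrt N\le |x|\le \epsilon^{-1}\sqrt N$; the tail bound comes from the stochastic domination estimate \eqref{eq:ifoby2} and the Dirichlet integral \eqref{eq:boChat}. This route sidesteps precisely the difficulty you flag: the functional $\int_0^T X_t^{-2\theta}\dd t$ is \emph{not} continuous on Skorokhod space, so the invariance principle plus continuous mapping does not apply directly, and a Skorokhod-coupling argument would still need the $\epsilon$-excision you mention---at which point one is essentially reproducing the chaos-expansion computation anyway.

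There is also a concrete gap in your pathwise step. The replacement $\phi(S_n)^2\rightsquigarrow c^2|S_n|^{-2\theta}$ is only valid for $|S_n|$ large; for $|S_n|=O(1)$ the discrete $\phi(S_n)^2$ is $O(1)$, not blowing up, whereas $|S_n|^{-2\theta}$ is also $O(1)$---but the approximation error is not uniformly small, and the contribution of low heights to the Riemann sum must be shown to vanish. The paper handles this through the same $\epsilon$-cutoff and the bound \eqref{eq:ifoby3}. Your cubic remainder $O(\beta^3\sum\phi(S_n)^3)$ is indeed $o(1)$ in the exponent (in fact $O(\delta^{1/2})$, not $O(\delta^{(1-\theta)/2})$ as you wrote), but controlling it \emph{inside} the expectation, uniformly over paths that visit $0$ often, again requires the same moment machinery.

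For the interchange (ii), super-multiplicativity of the constrained partition function gives only the lower bound $\liminf_N N F^\ann(\beta_N,h_N)\ge \hat F^\ann$. The paper obtains the matching upper bound not by uniform-in-$T$ estimates but by a \emph{sandwich}: it introduces auxiliary partition functions $Z^{\ann,-}_{M,N}$ (superadditive in $N$) and $Z^{\ann,+}_N$ (subadditive in $N$), both sharing the same free energy (Lemma~\ref{lem:Fannalt}), so that $\frac1N\log Z^{\ann,-}\le F^\ann\le\frac1N\log Z^{\ann,+}$ holds for every finite $N$. Passing to the continuum analogues (Lemma~\ref{lem:Fannaltalt}) and letting $T\to\infty$ closes the sandwich. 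This is the step you correctly anticipate as the main obstacle; the key idea you are missing is to take the supremum over starting points in the subadditive object.
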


\begin{theorem}
\label{thm:weak2}
Suppose that $\alpha \in (0,1)$ and $\theta \in (1-\alpha,2(1-\alpha))$. For every $\hat\beta \in (0,\infty)$ 
and $\hat{h} \in (0,\infty)$, 
\begin{equation}
\label{eq:convweak2}
\lim_{\delta \downarrow 0}\, \delta^{-1}\,F^\ann\left(\hat\beta\,\delta^{\alpha/2},
\hat{h}\,\delta^{(2-\theta)/2}\right) = \hat{F}^\ann(\hat\beta,\hat{h}),
\end{equation}
where
\begin{equation}\label{eq:hatF2}
\hat{F}^\ann(\hat\beta,\hat{h}) := \lim_{T\to\infty} \frac{1}{T} \log \hat{\e}\left[\exp\left(
\tfrac12\hat\beta^2\,c^*[\phi^2]\,\hat{L}_T(0)
- \hat{h}c \int_0^T \dd t\,\,X_t^{-\theta}\right)\right]
\end{equation}
with $c$ the constant in \eqref{eq:phiscal} and $c^*[\phi^2]=\sum_{x\in\Z} \phi^2(x)\,c(x)$,
where $x \mapsto c(x) := c(|x|)$ is the function in \eqref{eq:Besseltail}--\eqref{eq:ctildeasymp}.
\end{theorem}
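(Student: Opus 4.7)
The plan is to fix $T>0$, set $N=\lfloor T/\delta\rfloor$ (so that $\delta^{-1}\f^\ann(\beta,h)=\lim_{T\to\infty}\tfrac{1}{T}\log Z^{\ann}_{N,\beta,h}$), prove convergence at fixed $T$ of the discrete log-partition function to its Bessel-process analogue, and then take $T\to\infty$. Since the $\omega$-averaging has already reduced the problem to a homogeneous pinning-type model with potential $\psi_{\beta,h}(x)=\log \M(\beta\phi(x))-h\phi(x)$, the first step is to Taylor expand. Using $\bbE[\omega_1]=0$, $\bbvar[\omega_1]=1$ and \eqref{eq:expmom},
\begin{equation*}
\psi_{\hat\beta\delta^{\alpha/2},\,\hat h\delta^{(2-\theta)/2}}(x) \;=\; \tfrac{1}{2}\hat\beta^2\delta^{\alpha}\phi(x)^2 \;-\; \hat h\,\delta^{(2-\theta)/2}\phi(x) \;+\; R_\delta(x),
\end{equation*}
with $|R_\delta(x)|\le C\delta^{3\alpha/2}\phi(x)^3$. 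Hence the analysis reduces to the two random sums $\Sigma^{(2)}_N:=\sum_{n=1}^N\phi(S_n)^2$ and $\Sigma^{(1)}_N:=\sum_{n=1}^N\phi(S_n)$, together with control of the cubic remainder.

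For $\Sigma^{(2)}_N$: since $\phi(x)^2\sim c^2|x|^{-2\theta}$ and $c(x)\sim\tfrac{2^\alpha}{\Gamma(1-\alpha)}|x|^{1-2\alpha}$, the assumption $\theta>1-\alpha$ forces $c^*[\phi^2]=\sum_x\phi(x)^2c(x)<\infty$. The local limit theorem~\eqref{eq:Besseltail} then yields $\bbE[\phi(S_n)^2]\sim c^*[\phi^2]/(n^{1-\alpha})$, hence $\bbE[\delta^\alpha\Sigma^{(2)}_N]\to \alpha^{-1}c^*[\phi^2]T^\alpha=c^*[\phi^2]\,\hat\e[\hat L_T(0)]$; more strongly, coupling the invariance principle~\eqref{eq:invprin} with convergence of the random walk's empirical occupation measure near $0$ to the Bessel local time yields the joint convergence $(\delta^\alpha\Sigma^{(2)}_N,\,|S_{\lfloor N\cdot\rfloor}|/\sqrt{N})\Rightarrow (c^*[\phi^2]\hat L_T(0),\,X_\cdot)$. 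For $\Sigma^{(1)}_N$: the assumption $\theta<2(1-\alpha)$ makes $\sum_x\phi(x)c(x)=\infty$, so the sum is dominated by intermediate heights $|S_n|\asymp\sqrt N$, where $\phi(S_n)\sim c\,|S_n|^{-\theta}=cN^{-\theta/2}(|S_n|/\sqrt N)^{-\theta}$. Under \eqref{eq:invprin} the resulting Riemann sum converges: $\delta^{(2-\theta)/2}\Sigma^{(1)}_N\Rightarrow c\int_0^T X_t^{-\theta}\,\dd t$. The global upper bound~\eqref{eq:globalUB} gives the uniform control of the tail contributions (small $|S_n|$, where the singular factor $|x|^{-\theta}$ is largest but has only finitely many values, and large $|S_n|$, suppressed by the Gaussian factor), and an analogous estimate shows that $R_\delta$ contributes $\bbE[|\sum_n R_\delta(S_n)|]=O(\delta^{\alpha/2}T^\alpha)$, negligible after division by $T$ and the limit $\delta\downarrow 0$.

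The main obstacle will be lifting the in-distribution convergence of the exponent to convergence of $\tfrac{1}{T}\log\e[\ee^{(\cdot)}]$. The local-time term is non-negative and bounded by $O(\delta^\alpha\|\phi\|_\infty^2 N)=O(T)$, but the $\hat h$-term produces a near-singular potential $X_t^{-\theta}$ at the origin, so uniform integrability of $\ee^{-\hat h c\int_0^T X_t^{-\theta}\dd t}$ (under the discrete law) must be established uniformly in small $\delta$. I would split $\phi=\phi\,\ind_{\{|x|\le K\}}+\phi\,\ind_{\{|x|>K\}}$ and use Hölder/Cauchy–Schwarz to decouple: the window $\{|x|\le K\}$ gives the local-time term up to an error $o_K(1)$, while the complement gives the spatial-integral term with a remainder controlled by \eqref{eq:globalUB}; matching upper and lower bounds follow, respectively, from a Hölder argument together with submultiplicativity of partition functions, and from a restriction to a good event for the rescaled walk (on which $|S_{\lfloor N\cdot\rfloor}|/\sqrt N$ lies in a compact set bounded away from $0$ and $\infty$), on which \eqref{eq:llt} provides a sharp lower bound. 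An alternative route, which may be cleaner once the variational representations of Theorem~\ref{thm:varrep} are available, is to transfer the limit at the variational level: one shows $\Gamma$-convergence of the discrete rate functional (written in terms of excursion measures) to its Bessel-process counterpart, whose minimiser realises $\hat\f^\ann$. Finally, taking $T\to\infty$ in the matched identities yields the claimed limit~\eqref{eq:convweak2}.
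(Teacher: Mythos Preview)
Your approach differs substantially from the paper's. The paper does \emph{not} prove convergence in distribution of the exponent and then lift to exponential moments; instead it performs a \emph{polynomial chaos expansion}: writing $e^{\psi_{\beta_N,h_N}(x)} = 1 + \chi_{\beta_N,h_N}(x)$ and multiplying out gives $Z^\ann_{TN,\beta_N,h_N} = 1 + \sum_{k\ge 1} C_{TN,k}$ with
\[
C_{TN,k} = \sum_{1\le n_1<\cdots<n_k\le TN} \e\Big[\prod_{\ell=1}^k \chi_{\beta_N,h_N}(|S_{n_\ell}|)\Big],
\]
and an analogous expansion $1+\sum_k \hat C_{T,k}$ for the continuum partition function. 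The proof then consists of (i) a uniform tail bound $\sum_{k>K} C_{TN,k} < \epsilon$ (obtained from $\e[\bar\chi(|S_{Nt}|)] \le C N^{-1} t^{-(1-\alpha)}$ and a Dirichlet integral, giving $C_{TN,k}\le (CT^\alpha\Gamma(\alpha))^k/\Gamma(\alpha k)$), and (ii) termwise convergence $C_{TN,k}\to\hat C_{T,k}$. In the second regime the key observation is the one you also make: the $\phi^2$ term is short-range (mass on $|x|=O(1)$, handled by \eqref{eq:Besseltail}, producing the local-time factor $c^*[\phi^2]\hat g_t(0,0)$) while the $\phi$ term is long-range (mass on $|x|\asymp\sqrt N$, handled by \eqref{eq:llt}, producing $c\,X_t^{-\theta}$). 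Expanding the product over $\ell$ gives $2^k$ mixed terms, each treated by the appropriate combination of the two local limit theorems. The interchange of $N\to\infty$ and $T\to\infty$ is handled separately by a sub/superadditivity sandwich (Appendix~\ref{app:interchange}).

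Your route is in principle viable, but there is a genuine gap precisely at the step you flag as the ``main obstacle,'' and one of your supporting estimates is wrong. With $N=\lfloor T/\delta\rfloor$ the deterministic bound on the local-time term is $\delta^\alpha\Sigma^{(2)}_N \le \delta^\alpha N\|\phi\|_\infty^2 \asymp T\,\delta^{\alpha-1}$, which \emph{diverges} as $\delta\downarrow 0$ since $\alpha<1$; so no $O(T)$ bound is available and uniform integrability of $\exp(\tfrac12\hat\beta^2\delta^\alpha\Sigma^{(2)}_N)$ is exactly the nontrivial issue, not a side remark. Likewise, the $L^1$ bound $\e\big[|\sum_n R_\delta(S_n)|\big]=O(\delta^{\alpha/2}T^\alpha)$ on the cubic remainder does not control its contribution inside an exponential; you would need exponential-moment control of $\delta^{3\alpha/2}\sum_n\phi(S_n)^3$, which feeds back into the same problem. (The $-\hat h$ term is harmless for upper bounds since it is nonpositive; the whole difficulty sits with the positive $\hat\beta^2$ part.) The chaos expansion bypasses this circularity because it never exponentiates: each $C_{TN,k}$ is bounded directly and uniformly in $N$, with a summable-in-$k$ bound, so convergence of the partition function follows from dominated convergence on the series rather than from uniform integrability of a single random exponent. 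Your H\"older/splitting sketch and the $\Gamma$-convergence alternative are too vague to close this gap as written.
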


\begin{theorem}
\label{thm:weak3}
Suppose that $\alpha \in (0,1)$ and $\theta \in (2(1-\alpha),\infty)$. For every $\hat\beta \in (0,\infty)$
and $\hat{h} \in (0,\infty)$, 
\begin{equation}
\label{eq:convweak3}
\lim_{\delta \downarrow 0}\, \delta^{-1}\,F^\ann\left(\hat\beta\,\delta^{\alpha/2},
\hat{h}\,\delta^{\alpha}\right) = \hat{F}^\ann(\hat\beta,\hat{h}),
\end{equation}
where
\begin{equation}\label{eq:hatF3}
\hat{F}^\ann(\hat\beta,\hat{h}) := \lim_{T\to\infty} \frac{1}{T} \log \hat{\e}\left[\exp\left(
\Big\{\tfrac12\hat{\beta}^2\,c^*[\phi^2]-\hat{h}\,c^*[\phi]\Big\}\,\hat{L}_T(0)\right)\right]
\end{equation}
with $c^*[\phi] = \sum_{x\in\Z} \phi(x) c(x)$, where $x \mapsto c(x) := c(|x|)$ is the function in 
\eqref{eq:Besseltail}--\eqref{eq:ctildeasymp}.
\end{theorem}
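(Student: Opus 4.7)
\medskip\noindent
\emph{Proof proposal.} The plan is to reduce the annealed partition function to a Feynman-Kac functional of the Bessel process involving only its local time at zero, exploiting that the regime $\theta > 2(1-\alpha)$ is exactly the one in which every positive power of $\phi$ is summable against the $c(\cdot)$-weights from \eqref{eq:Besseltail}--\eqref{eq:ctildeasymp}.

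\emph{Step 1 (Taylor expansion).} From $\bbE[\omega_1]=0$ and $\bbvar[\omega_1]=1$ one has $\M(t) = 1 + \tfrac{1}{2} t^2 + O(t^3)$ as $t \to 0$. Under the scaling $\beta = \hat\beta\delta^{\alpha/2}$, $h = \hat h\delta^\alpha$,
\begin{equation*}
\psi_{\beta,h}(x) = \delta^{\alpha}\big[\tfrac{1}{2}\hat{\beta}^2 \phi^2(x) - \hat{h}\,\phi(x)\big] + O\big(\delta^{3\alpha/2}\phi^3(x)\big).
\end{equation*}
The prefactor $\delta^{\alpha}$ is precisely calibrated so that summation over $n=1,\ldots,\lfloor T/\delta\rfloor$ lands on the $T^{\alpha}$ scale, matching $\hat{\e}[\hat{L}_T(0)] = T^{\alpha}/\alpha$.

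\emph{Step 2 (Local-time convergence for the Bessel walk).} The technical heart is the following claim: for every bounded $g\colon\Z\to\R$ with $c^*[g]:=\sum_{x\in\Z} g(x)c(x)$ absolutely convergent,
\begin{equation*}
\delta^{\alpha}\sum_{n=1}^{\lfloor T/\delta\rfloor} g(S_n) \;\longrightarrow\; c^*[g]\,\hat{L}_T(0)
\quad\text{as } \delta\downarrow 0, \text{ in law (jointly with \eqref{eq:invprin})}.
\end{equation*}
The means match because $\sum_{n=1}^N \p(S_n=k) \sim c(k)\,N^{\alpha}/\alpha$ by \eqref{eq:Besseltail}, integrated against $g$ against the $c^*[g]$-weighted definition \eqref{eq:hatLdef} of $\hat{L}_T(0)$. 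The second moment is controlled by the Chapman-Kolmogorov expansion $\bbE[g(S_n)g(S_m)] = \sum_{k,l} g(k) g(l)\,\p(S_n=k)\,\p_k(S_{m-n}=l)$ using the two-point LLT \eqref{eq:Besseltail} with starting height $k=o(\sqrt{m-n})$ and the global Gaussian bound \eqref{eq:globalUB} for the tail contributions. Since $c(k)\sim k^{1-2\alpha}$, the hypothesis $\theta > 2(1-\alpha)$ guarantees $c^*[\phi^a]<\infty$ for every $a\geq 1$; applied with $a=3$ it also shows that the cubic Taylor remainder in Step 1 contributes an additive term of order $\delta^{\alpha/2}\cdot c^*[\phi^3]\,\hat{L}_T(0) = o(1)$.

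\emph{Step 3 (From partition functions to free energy).} Applying Step 2 with $g=\phi$ and $g=\phi^2$ and combining with Step 1 yields convergence of the full exponent in $Z^{\ann,\rc}_{\lfloor T/\delta\rfloor}(\beta_\delta,h_\delta)$ to $A\hat{L}_T(0)$ with $A:=\tfrac{1}{2}\hat\beta^2 c^*[\phi^2]-\hat h c^*[\phi]$. To upgrade in-law convergence to convergence in expectation, uniform integrability of the exponential is obtained by splitting $\phi = \phi\,\ind_{[-R,R]} + \phi\,\ind_{[-R,R]^c}$, using the Gaussian bound \eqref{eq:globalUB} to show that the truncated tail is small uniformly in $\delta$, and invoking the Gaussian-type tails of $\hat{L}_T(0)$ for the leading part. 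This gives $Z^{\ann,\rc}_{\lfloor T/\delta\rfloor}(\beta_\delta,h_\delta) \to \hat{Z}_T := \hat{\e}[\exp(A\hat{L}_T(0))]$ for every fixed $T>0$. By super-multiplicativity $Z^{\ann,\rc}_{M+N}\geq Z^{\ann,\rc}_M Z^{\ann,\rc}_N$, equivalent (via \eqref{eq:compareann}) to the standard form of $F^\ann$, one has $\delta^{-1} F^\ann(\beta_\delta,h_\delta) \geq T^{-1}\log Z^{\ann,\rc}_{\lfloor T/\delta\rfloor}(\beta_\delta,h_\delta)$ for every $T>0$; sending $\delta\downarrow 0$ then $T\to\infty$ yields the lower bound. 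The matching upper bound is obtained by a blocking argument: decompose a generic time $N\delta \approx T$ into blocks of length $T_*$ each anchored at $S_\cdot = 0$ (via the constrained partition function), compare $Z^{\ann,\rc}_{\lfloor T/\delta\rfloor}$ with products of block partition functions, let $\delta\downarrow 0$ and then $T_*\to\infty$, and use that $\hat{Z}_T$ is itself super-multiplicative so that $T^{-1}\log \hat Z_T \to \hat{F}^\ann$.

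\emph{Main obstacle.} The decisive difficulty is the combination of Steps 2 and 3: one is exponentiating a non-sign-definite sum of $\delta^{-1}$ many weakly correlated terms that converges only weakly to a local-time functional. The second-moment estimate, the tail truncation, and the control of the Taylor remainder all rely on fine pairwise and uniform versions of \eqref{eq:Besseltail}--\eqref{eq:globalUB}; the assumption $\theta>2(1-\alpha)$ is used to its full strength precisely because one needs $c^*[\phi^a]<\infty$ for arbitrarily large $a$ in the uniform integrability step. A secondary, but technically subtle, point is the upper-bound direction of Step 3, where one must exclude that $\delta^{-1}F^\ann$ is driven by atypical scales $T\to 0$ or $T\to\infty$ of the rescaled time; this is handled by the blocking/super-multiplicativity argument above.
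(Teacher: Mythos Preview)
Your approach is genuinely different from the paper's and, at the level of Steps~1--2 and the lower bound in Step~3, is a viable strategy. The paper does \emph{not} pass through a functional limit theorem for the additive functional $\delta^{\alpha}\sum g(S_n)$: instead it writes the partition function as a polynomial chaos (Mayer-type) expansion
\[
Z^{\ann}_{TN,\beta_N,h_N} \;=\; 1 + \sum_{k\ge 1} C_{TN,k}, \qquad
C_{TN,k} \;=\; \sum_{1\le n_1<\cdots<n_k\le TN}\e\Big[\prod_{\ell} \chi_{\beta_N,h_N}(|S_{n_\ell}|)\Big],
\]
proves $C_{TN,k}\to\hat C_{T,k}$ term by term via the local limit theorem, and obtains uniform tail control $\sum_{k>\bar K}C_{TN,k}<\epsilon$ from a Dirichlet-integral bound. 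This yields convergence of partition functions \emph{and} the needed uniform integrability in a single stroke, whereas your route separates weak convergence of the exponent from UI of the exponential. Both strategies are legitimate; the chaos expansion is more robust across the three regimes for $\theta$ (it treats them simultaneously), while your local-time argument is conceptually cleaner in the short-range regime $\theta>2(1-\alpha)$ but would not extend to $\theta<2(1-\alpha)$, where the limiting functional is no longer a pure local time.

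There is, however, a real gap in your Step~3: the upper bound. Super-multiplicativity of the \emph{constrained} partition function gives only $\delta^{-1}F^{\ann}(\beta_\delta,h_\delta)\ge T^{-1}\log Z^{\ann,\rc}_{\lfloor T/\delta\rfloor}$, i.e.\ the lower bound. Your ``blocking'' for the upper bound, as written, points in the wrong direction: decomposing into blocks anchored at $S_\cdot=0$ and comparing with products of constrained block partition functions again yields super-multiplicativity, not a sub-multiplicative envelope. To bound $\limsup_\delta \delta^{-1}F^{\ann}$ from above you need a quantity that is \emph{sub}-additive in $N$, dominates $F^{\ann}$ from above, and still converges under the scaling. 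The paper handles this (Appendix~A.4) by introducing $Z^{\ann,+}_{N}:=\sup_{x}\sum_{y}\max_{0\le N'\le N}Z^{\ann,x,y}_{N'}$, which is sub-additive precisely because of the supremum over the starting point and the maximum over times, and then showing that both $Z^{\ann,+}$ and its continuum analogue $\hat Z^{\ann,+}$ have the same exponential growth rate as the unconstrained versions. Your sketch does not supply such an object, and without it the interchange of the limits $\delta\downarrow 0$ and $T\to\infty$ is unjustified. (A minor point: $\hat L_T(0)$ does not have Gaussian tails---its upper tail is stretched-exponential of order $\exp(-c\,x^{1/(1-\alpha)})$---but it does have all exponential moments, which is what your UI argument actually needs.)
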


Note that, because of \eqref{eq:ctildeasymp} and \eqref{eq:phiscal}, $c^*[\phi^2]<\infty$ when 
$\theta>1-\alpha$ and $c^*[\phi]<\infty$ when $\theta>2(1-\alpha)$. In  Appendix~\ref{app:finpartcont} 
we will show that the annealed partition functions associated with the Bessel process appearing 
in Theorems~\ref{thm:weak1}--\ref{thm:weak3} are finite, and so are the corresponding 
annealed free energies. 

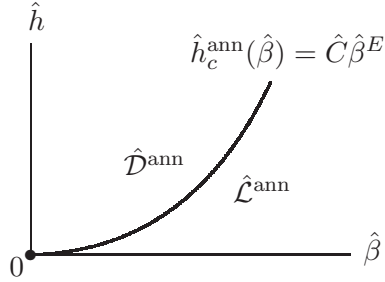
\begin{figure}[htbp]
\begin{center}
\setlength{\unitlength}{0.35cm}
\begin{picture}(12,12)(0,-1.5)
\put(0,0){\line(12,0){12}}
\put(0,0){\line(0,8){8}}
{\thicklines
\qbezier(0,0)(6,0.1)(9,6.5)
}
\put(-.8,-.8){$0$}
\put(12.5,-0.2){$\hat\beta$}
\put(-0.1,8.5){$\hat{h}$}
\put(0,0){\circle*{.4}}
\put(7.6,2){$\hat\cL^\ann$}
\put(3.5,3){$\hat\cD^\ann$}
\put(6,7.3){$\hat{h}_c^\ann(\hat\beta)=\hat{C}\hat\beta^E$}
\end{picture}
\end{center}
\vspace{0cm}
\caption{\small Plot of $\hat\beta \mapsto \hat{h}_c^\ann(\hat\beta)$ in \eqref{eq:contcritcurve}.}
\label{fig-critcurveanncont}
\end{figure}

The annealed free energy $\hat{F}^\ann(\hat\beta,\hat{h})$ appearing in Theorems~\ref{thm:weak1}--\ref{thm:weak3}
has its own phase diagram, with phases 
\begin{equation}
\begin{aligned}
\hat\cL^\ann &:= \{(\hat{\beta},\hat{h})\colon\,\hat{F}^\ann(\hat\beta,\hat{h})>0\},\\
\hat\cD^\ann &:= \{(\hat{\beta},\hat{h})\colon\,\hat{F}^\ann(\hat\beta,\hat{h})=0\},
\end{aligned}
\end{equation}
and with a critical curve that is a perfect power law (see Fig.~\ref{fig-critcurveanncont}),
namely,
\begin{equation}
\label{eq:contcritcurve}
\hat{h}_c^\ann(\hat\beta) = \hat{C}\hat\beta^E, \qquad \hat\beta \in (0,\infty), 
\end{equation}
where 
\begin{equation}
\label{eq:exponent}
E=E(\alpha,\theta) = \left\{\begin{array}{ll}
(2-\theta)/(1-\theta), &\theta \in (0,1-\alpha),\\[0.1cm]
(2-\theta)/\alpha, &\theta \in (1-\alpha,2(1-\alpha)),\\[0.1cm]
2, &\theta \in (2(1-\alpha),\infty),
\end{array}
\right.
\end{equation} 
plays the role of a \emph{critical exponent} (see Fig.~\ref{fig-exponent}). The scaling 
of the annealed critical curve in Theorems~\ref{thm:weak1}--\ref{thm:weak3} can
be summarised as saying that $h_c^\ann(\beta) \sim \hat{h}_c^\ann(\beta)$, $\beta
\downarrow 0$.

\begin{figure}[htbp]
\begin{center}
\setlength{\unitlength}{0.5cm}
\begin{picture}(12,12)(0,-4.5)
\put(0,0){\line(12,0){12}}
\put(0,0){\line(0,6){6}}
{\thicklines
\qbezier(0,2)(2,2.2)(4,4)
\qbezier(4,4)(6,3)(8,2)
\qbezier(8,2)(9,2)(10,2)
}
\qbezier[50](0,2)(5,2)(11,2)
\qbezier[50](0,4)(5,4)(11,4)
\qbezier[30](4,0)(4,2)(4,4)
\qbezier[15](8,0)(8,1)(8,2)
\put(-.2,-1){$0$}
\put(12.5,-0.2){$\theta$}
\put(-0.3,6.5){$E$}
\put(4,4){\circle*{.3}}
\put(8,2){\circle*{.3}}
\put(-0.8,1.8){$2$}
\put(-1.6,3.8){$\frac{1+\alpha}{\alpha}$}
\put(3.2,-1){{\small $1-\alpha$}}
\put(6.6,-1){{\small $2(1-\alpha)$}}
\end{picture}
\end{center}
\vspace{-1.5cm}
\caption{\small Plot of the critical exponent $E$ in \eqref{eq:exponent} as a function
of $\theta$ for fixed $\alpha$. The three regimes for $\theta$ are indicated. No 
information is available at the two crossover points.}
\label{fig-exponent}
\end{figure}
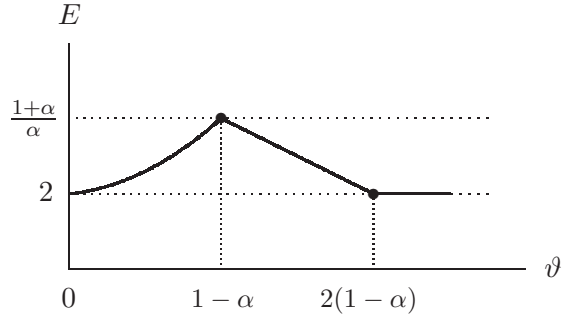
 
The constant $\hat{C}$ depends on $\alpha,\phi$ and can be characterized as the unique solution 
$\hat{C} \in (0,\infty)$ of the equation $\hat{F}^\ann(1,\hat{C})=0$. This constant is hard to identify 
in the first two regimes. In the third regime $\theta \in (2(1-\alpha),\infty)$ it is found by inserting 
\eqref{eq:contcritcurve} into the equation 
\begin{equation}
\Delta\big(\hat{\beta},\hat{h}^\ann_c(\hat\beta)\big) = 0 \quad \text{ with } \quad
\Delta(\hat{\beta},\hat{h}) := \tfrac12\hat{\beta}^2\,c^*[\phi^2]-\hat{h}\,c^*[\phi],
\end{equation} 
which gives
\begin{equation}
\hat{C} = \frac{c^*[\phi^2]}{2 c^*[\phi]}.
\end{equation}
We show in Appendix~\ref{app:annfeexplicit} that, for the the third regime $\theta \in (2(1-\alpha),\infty)$,
the annealed free energy $\hat{F}^\ann(\hat{\beta},\hat{h})$ can be computed explicitly, namely,
\begin{equation}
\label{eq:Fannid}
\hat{F}^\ann(\hat{\beta},\hat{h}) 
= \Big( \Gamma(\alpha) \, \big[0 \vee \Delta(\hat{\beta},\hat{h})\big] \Big)^{1/\alpha}.
\end{equation} 

\begin{remark}
{\rm In view of the scaling limit for the annealed free energies described in Theorems~\ref{thm:weak1}--\ref{thm:weak3},
it is natural to expect a scaling limit for the corresponding annealed critical curves as well. Indeed, 
the continuum critical curve is the perfect power law in \eqref{eq:contcritcurve}, where $\hat{C}$ and $E$ 
depend on $\alpha$ and $\phi$ (and hence on $\theta$). We \emph{conjecture} that \eqref{eq:contcritcurve} 
captures the asymptotic behaviour for weak interaction of the \emph{discrete} critical curve $h_c^\ann(\beta)$ as well, 
in the sense that in all three regimes we should have
\begin{equation} 
\label{eq:thefull}
\lim_{\beta \downarrow 0} \beta^{-E}\,h_c^\ann(\beta) = \hat{C}.
\end{equation}
This scaling relation cannot be simply deduced from Theorems~\ref{thm:weak1}--\ref{thm:weak3}, because 
pointwise convergence of the free energies does not imply convergence of their zero-level sets, of which 
the critical curves are the boundaries. However, half of \eqref{eq:thefull} follows because if the continuum 
free energy is strictly positive, then the rescaled discrete free energy eventually becomes strictly positive too
in the weak interaction limit, which leads to
\begin{equation}
\liminf_{\beta \downarrow 0} \beta^{-E}\,h_c^\ann(\beta) \ge \hat{C} .
\end{equation}
In order to prove \eqref{eq:thefull} extra work is needed: the scaling of the free energies in 
\eqref{eq:convweak1}, \eqref{eq:convweak2} and \eqref{eq:convweak3} must be strengthened to 
a \emph{perturbative scaling}, as shown in \cite{cf:BodHo} and \cite{cf:CaGi} for the copolymer model
and in \cite{cf:CaToTo} for the pinning model.}
\end{remark}


\subsection{Discussion}
\label{ss:disc}

We comment on the results in Sections~\ref{ss:thmsgen}--\ref{ss:thmsweak}.

\medskip\noindent
{\bf 1.} 
The results in Theorems~\ref{thm:varrep}--\ref{thm:boundshc} are known for the
special case where the interaction potential is that of the pinning model or the 
copolymer model defined in \eqref{eq:phipincop}. However, the techniques used 
for these two cases do not carry over to the general class of potentials considered 
in \eqref{eq:phidecay}. Intuitively, the reason why extension is possible is that the 
conditions stated in \eqref{eq:phidecay} say that, outside a large interval around 
the origin in $\Z$, the interaction potential is controlled by a multiple of that 
of the copolymer model.   

\medskip\noindent
{\bf 2.} 
As we will see in Section~\ref{s:varcrit}, the variational formula for $h_c^\que(\beta)$ 
mentioned in Theorem~\ref{thm:varrep} involves a supremum over the space of all 
shift-invariant probability distributions on the set of infinite sequences of words of 
arbitrary length drawn from an infinite sequence of letters taking values in $\R \times \Z$. 
The supremum involves a quenched rate function that captures the \emph{complexity 
of the interplay} between the disorder of the charges and the excursions of the polymer 
away from the interface. This variational formula is hard to manipulate, but it is the 
starting point for the proof of Theorems~\ref{thm:boundshc}. The variational formula 
for $h_c^\ann(\beta)$ mentioned in Theorem~\ref{thm:varrep} is simpler, but still not 
easy to manipulate (see \eqref{eq:condloc} below).  

\medskip\noindent
{\bf 3.}
Note that for $h = 0$ and $\beta > 0$ the annealed partition function $Z_{N, \beta, h}^\ann$ 
is bounded from below by the partition function of a homogenous pinning model with a 
strictly positive reward, which is localized. The lower bound in Theorem~\ref{thm:boundshc}
therefore shows that $h^\que_c(\beta) > 0$ for every $\beta > 0$. Since $\beta \mapsto 
h_c^\que(\beta)$ is convex, it must therefore be strictly increasing (see Fig.~\ref{fig-critcurveque}).

\medskip\noindent
{\bf 4.} 
Disorder has a tendency to smoothen the phase transition. In Caravenna and den 
Hollander~\cite{cf:CadHo} a general \emph{smoothing inequality} is derived that reads as follows:
\begin{itemize}
\item
For every $\beta > 0$ there exist $C(\beta) < \infty$ and $\delta(\beta) > 0$ such that
\begin{equation} 
\label{eq:boundsfe}
0 \leq \f^\que (\beta, h_c^\que(\beta) - \delta) \leq
C(\beta) \delta^2 \qquad \forall\, 0 \leq \delta \leq \delta(\beta),
\end{equation}
i.e., the quenched phase transition is at least of second order. 
\end{itemize}
Unfortunately, the key assumption under which this smoothing inequality is derived is \emph{not obviously 
met} by spatially extended pinning: it does when the tail exponent of $\phi$ (recall \eqref{eq:phiscal}) 
satisfies $\theta  \in (2,\infty)$, but it is unclear whether it also does when $\theta \in (0,2]$. Indeed, the key 
assumption in \cite{cf:CadHo} requires that $Z_{N,\beta,h}^{\omega,\rc} \geq N^{-\gamma} c^\omega_{\beta,h}$ 
for some $\gamma>0$ with $\bbE[\log c^\omega_{\beta,h}]>-\infty$. Applying Jensen's inequality to \eqref{eq:Zc}, 
we get
\begin{equation}
Z_{N,\beta,h}^{\omega,\rc} \geq \ee^{\sum_{n=1}^N (\beta\omega_n - h) \e[\phi(S_n) \mid S_N=0]}\,
\p(S_N=0), \qquad N\in\N.
\end{equation}
In general, if $\p(S_N=0) \geq CN^{-\gamma}$ for some $\gamma,C>0$ and all $N\in\N$ and,
furthermore, $\sup_{N\in\N} \sum_{n=1}^N |\e[\phi(S_n) \mid S_N=0]| < \infty$, then the assumption is 
met (recall \eqref{eq:omass}). For the Bessel random walk, the former holds by \eqref{eq:Besseltail} 
for $\gamma=1-\alpha$, while the latter holds by \eqref{eq:lltub} when $\phi(x) \sim c \, |x|^{-\theta}$ 
with $\theta \in (2,\infty)$, because in that case, for $n \le N/2$ (by symmetry), $\sup_{N\in\N}\e[\phi(S_n) 
\mid S_N=0] \lesssim \phi(\sqrt{n}\,) \asymp n^{-\theta/2}$ is summable.

\medskip\noindent
{\bf 5.}  
Theorems~\ref{thm:weak1}--\ref{thm:weak3} give detailed information about the scaling of 
the annealed free energy and the annealed critical curve in the limit of weak interaction. 
The scaling limits correspond to annealed free energies and annealed critical curves for 
Brownian versions of the model involving the Bessel process $X^\alpha$, which are 
interesting in their own right. The result is only valid for the Bessel random walk, and shows 
a \emph{trichotomy} depending on the parameters $\alpha$ and $\theta$. 
\begin{itemize}
\item
The regime $\theta \in (0,1-\alpha)$ corresponds to a \emph{long-range} interaction potential 
and is \emph{not} pinning-like. When localized, the continuum polymer spends a positive fraction of the 
time near any height $x \in \R$, and this fraction tends to zero as $|x| \downarrow 0$ or $|x|
\to \infty$. Away from $0$ it does \emph{not} behave like the Bessel process conditioned to return 
to $0$.
\item
The regime $\theta \in (1-\alpha,2(1-\alpha))$ corresponds to an \emph{intermediate-range} 
interaction potential and exhibits some pinning-like features. When localized, the continuum 
polymer visits $0$ a positive fraction of the time. Away from $0$ it does \emph{not} behave like the 
Bessel process conditioned to return to $0$.    
\item
The regime $\theta \in (2(1-\alpha),\infty)$ corresponds to a \emph{short-range} interaction potential
and is pinning-like. When localized, the continuum polymer visits $0$ a positive fraction of the 
time. Away from $0$ it behaves like the Bessel process conditioned to return to $0$.  
\end{itemize}
In the last regime the behaviour is similar to that of the homogeneous pinning model with 
$\phi(x) = c\ind_{\{x=0\}}$, for which it is known that $h^\ann_c(\beta) \sim \tfrac12 c \beta^2$, 
$\beta \downarrow 0$ (see Giacomin~\cite{cf:Gi1}, \cite{cf:Gi2}, den Hollander~\cite{cf:dHo}). 
In fact, the proof of Theorem~\ref{thm:weak3} will show that the scaling in the last regime is 
valid for any $\phi$ such that $c^*[\phi^2]$ and $c^*[\phi]$ are finite, i.e., \eqref{eq:phiscal} may
be replaced by the weaker condition $\phi(x) = O(|x|^{-2(1-\alpha)-\epsilon})$ for some 
$\epsilon>0$.
  
\medskip\noindent
{\bf 6.}
The three regimes for $\theta$ represent three \emph{universality classes}. The critical cases 
$\theta = 1-\alpha$ and $\theta = 2(1-\alpha)$ are more delicate and we have skipped them. 
Also, we have not investigated what happens when the scaling of the interaction potential in 
\eqref{eq:phiscal} is modulated by a slowly varying function. For the same reason we have 
assumed that the error term in \eqref{eq:dasymp} is $O(|x|^{-(1+\epsilon)})$ with $\epsilon>0$ 
rather than $o(|x|^{-1})$, since the latter may give rise to modulation by slowly varying functions 
in \eqref{eq:Besseltail} and \eqref{eq:ctildeasymp} (see Alexander~\cite{cf:Al}).

\medskip\noindent
{\bf 7.}
Theorems~\ref{thm:weak1}--\ref{thm:weak3} are deduced from scaling properties of the annealed 
partition function. They are not specific to the annealed model. In fact, analogous results hold for every 
homogeneous pinning potential that is a linear combination of potentials with a polynomial tail. See 
Section~\ref{s:annealed} for more details.
  
\medskip\noindent
{\bf 8.}
By the considerations made in Section~\ref{s:MBG}, the annealed model defined in 
\eqref{eq:Zann}--\eqref{eq:psi} is localized (i.e., $\f^\ann(\beta,h) > 0$, $h < h_c^\ann(\beta)$) 
if and only if
\begin{equation}
\label{eq:condloc}
\sum_{m\in\N} \e \bigg[ \ee^{\sum_{n=1}^{m} \psi_{\beta,h}(S_n)} \, 
\ind_{\{\tau_1 = m\}} \bigg] > 1,
\end{equation}
where we recall that $\tau_1$ denotes the first return time of $S$ to $0$. Although the starting 
point $0$ seems to play a special role in \eqref{eq:condloc}, it can be shown that the criterion 
in \eqref{eq:condloc} is invariant under spatial shifts of $\psi_{\beta,h}$ (see Appendix~\ref{appC}). 

A natural question is what happens when the random walk $S$ is 
\emph{transient}, i.e., $\p(\tau_1 < \infty) = \sum_{n\in\N} K(n) =: r < 1$. For the constrained 
partition function $Z_{N,\beta,h}^{\omega,\rc}$, working with a transient renewal process with 
law $K$ is equivalent to working with a recurrent renewal process with law $K/r$ and adding a 
\emph{depinning term} $\sum_{n=1}^N (\log r) \ind_{\{S_n = 0\}}$ in the exponential in 
\eqref{eq:Zc}. This amounts to replacing $\psi_{\beta,h}(x)$ by $\psi_{\beta,h}(x) + 
(\log r)\ind_{\{x=0\}}$, and so instead of \eqref{eq:condloc} the localization condition 
for the annealed model becomes
\begin{equation} 
\label{eq:condlocalt}
\sum_{m\in\N} \e\left[\ee^{\sum_{n=1}^{m} \psi_{\beta,h}(S_n)} \, \ind_{\{\tau_1 = m\}} \right] 
> \frac{1}{r}.
\end{equation}
\begin{itemize}
\item[(I)] 
For the \emph{copolymer model} we have $\phi(x) = \phi^\cop(x) = \ind_{\{x \leq 0\}}$ 
(recall \eqref{eq:phipincop}) and $\psi_{\beta,h}(x) = (\tfrac12\beta^2-h) \ind_{(-\infty,0]}(x)$. 
Therefore $h_c^\ann(\beta) = \tfrac12\beta^2$ and, in fact, the left-hand side of \eqref{eq:condloc} 
is $\leq 1$ for $h \leq h_c^\ann(\beta)$ and is $=\infty$ for $h > h_c^\ann(\beta)$. This means 
that the annealed critical curve $h_c^\ann$ does not depend on $r$, and hence neither do the 
bounds in \eqref{eq:boundshc}. In other words, making the underlying renewal process transient 
or, equivalently, adding a homogeneous depinning term at zero, does not modify the annealed 
critical curve of the copolymer model. In essence this is due to the fact that the copolymer 
potential is \emph{long range} (i.e., $\psi_{\beta,h}(x)$ does not vanish as $x \to -\infty$).
\item[(II)] 
For the \emph{pinning model}, adding a depinning term at zero amounts to shifting $h$ and 
this may have an effect. In essence this is due to the fact that the pinning potential $\phi(x) 
= \phi^\pin(x) = \ind_{\{x=0\}}$ is \emph{short range}.
\end{itemize}


\subsection{Open problems and outline}
\label{ss:open}

\begin{itemize}
\item[1.]
Are the inequalities in \eqref{eq:boundshc} strict for all $\beta>0$? For the \emph{copolymer 
model} ($\phi(x)=1_{\{x \leq 0\}}$) the answer is yes for all $\alpha>0$ (Bolthausen, den 
Hollander and Opoku~\cite{cf:BodHoOp}). Moreover, it is known that (Bolthausen and den 
Hollander~\cite{cf:BodHo}, Caravenna and Giacomin~\cite{cf:CaGi}) 
\begin{equation}
\lim_{\beta \downarrow 0} \beta^{-1} h^\ann_c(\beta) = 1,
\qquad \lim_{\beta \downarrow 0} \beta^{-1} h^\que_c(\beta) = C(\alpha),
\end{equation} 
with $C(\alpha) \in (1/(1+\alpha),1)$ for $0<\alpha<1$ (Bolthausen, den Hollander and 
Opoku~\cite{cf:BodHoOp}) and $C(\alpha)=1/(1+\alpha)$ for $\alpha \geq 1$ (Berger, 
Caravenna, Poisat, Sun and Zygouras~\cite{cf:BeCaPoSuZy}). For the \emph{pinning model} 
($\phi(x)=1_{\{x=0\}}$) the answer depends on $\alpha$: no for $0<\alpha<\tfrac12$ 
(the upper bound is an equality), yes for $\alpha>\tfrac12$. Moreover, it is known that
\begin{equation}
\lim_{\beta \downarrow 0} \beta^{-2} h_c^\ann(\beta) = \tfrac12,
\qquad \lim_{\beta \downarrow 0} \beta^{-2} h_c^\que(\beta) = C'(\alpha),
\end{equation} 
with $C'(\alpha)=\tfrac12$ for $0<\alpha<1$ and $C'(\alpha) \in (0,\tfrac12)$ for $\alpha>1$.
As a matter of fact, refined estimates are available: 
\begin{itemize}
\item
$\alpha \in (0,\tfrac12)$: $h^\ann_c(\beta) = h^\que_c(\beta)$ for $\beta > 0$ small enough 
(Alexander~\cite{cf:Al0}, see also Cheliotis and den Hollander~\cite{cf:ChdHo});
\item
$\alpha \in (\tfrac12,1)$: $h^\ann_c(\beta)-h^\que_c(\beta) \sim c_\alpha \, \beta^{2\alpha/(2\alpha-1)}$ 
as $\beta \downarrow 0$ for a universal constant $c_\alpha \in (0,\infty)$ (Caravenna, Toninelli 
and Torri~\cite{cf:CaToTo}, and previously Alexander and Zygouras~\cite{cf:AlZy},
Derrida, Giacomin, Lacoin and Toninelli~\cite{cf:DeGiLaTo}).
\item
$\alpha = \frac{1}{2}$: $h^\ann_c(\beta)-h^\que_c(\beta) = \exp(-\frac{\pi}{2\beta^2}[1+o(1)])$
(Berger and Lacoin~\cite{cf:BeLa}, and previously Giacomin, Lacoin and Toninelli~\cite{cf:GiLaTo}).
\end{itemize}
For an overview,  we refer the reader to Giacomin~\cite{cf:Gi2}.
\item[2.]
Determine the order of the quenched phase transition. For the copolymer model it is 
known that the phase transition is of infinite order when $\alpha=0$ (Berger, Giacomin and 
Lacoin~\cite{cf:BeGiLa}). The same is conjectured to be true for $\alpha \in (0,1)$.
\item[3.]
Identify the scaling for weak interaction of the annealed model in the critical cases $\theta 
= 1-\alpha$ and $\theta = 2(1-\alpha)$.
\item[4.]
Identify the scaling for weak interaction of the quenched model. Because of Theorem~\ref{thm:boundshc}, 
the \emph{same} exponent $E$ as in \eqref{eq:exponent} applies.
\item[5.]
The qualitative shape of the critical curve in Fig.~\ref{fig-critcurveque} depends on our assumption 
in \eqref{eq:phidecay} that $\phi \geq 0$. A reflected picture holds when $\phi \leq 0$. It appears 
that for $\phi$ with mixed signs there are two critical curves $\beta \mapsto h_ {c,1}^\que(\beta)$ 
and $\beta \mapsto h_{c,2}^\que(\beta)$, separating a single quenched delocalized phase 
$\cD^\que$ from two quenched localized phases $\cL^\que_1$ and $\cL^\que_2$ 
that lie above $\cD^\que$, respectively, below $\cD^\que$. What are the properties 
of these critical curves? 
\item[6.]
What happens when $\beta=\beta_N$ and $h=h_N$ with $\beta_N,h_N \downarrow 0$ as $N\to\infty$. 
\item[7.]
Is it possible to include non-nearest-neighbour random walks?  
\end{itemize}

\bigskip\noindent
{\bf Outline.} The remainder of this paper is organized as follows. Theorem~\ref{thm:varrep} is 
proved in Section~\ref{s:varcrit}, Theorem~\ref{thm:boundshc} in Section~\ref{s:MBG} and 
Theorems~\ref{thm:weak1}--\ref{thm:weak3} in Section~\ref{s:annealed}. Appendices~\ref{appA}
and~\ref{appB} collect a few technical facts that are needed along the way.


\section{Proof of Theorem~\ref{thm:varrep}}
\label{s:varcrit}

In Section~\ref{ss:LDPs} we formulate annealed and quenched large deviation principles (LDPs) 
that are an adaptation to our model of the LDPs developed in Birkner~\cite{cf:Bi} and Birkner, 
Greven and den Hollander~\cite{cf:BiGrdHo}. The latter concern LDPs for \emph{random sequences 
of words} cut out from \emph{random sequences of letters} according to a renewal process. In 
Section~\ref{ss:varcritmeth} we formulate variational characterizations of the annealed and quenched 
critical curves that are an adaptation of the characterizations derived for the pinning model in 
Cheliotis and den Hollander~\cite{cf:ChdHo} and for the copolymer model in Bolthausen, den 
Hollander and Opoku~\cite{cf:BodHoOp}. In Section~\ref{ss:Varadhan} we explain how the 
variational characterizations follow from the LDPs via Varadhan's lemma.


\subsection{Annealed and quenched LDP}
\label{ss:LDPs}

Our starting observation is that the partition function in \eqref{eq:model1} 
depends on the \emph{sequence of words} $Y=(Y_i)_{i\in\N}$ determined 
by the disorder and by the excursions of the polymer, namely,
\begin{equation}
Y_i  = Y_i(\omega,S) :=\, \big((\omega_{\tau_{i-1}+1}, \ldots, \omega_{\tau_i}),
(S_{\tau_{i-1}+1}, \ldots, S_{\tau_i})\big), \qquad i \in \N,
\end{equation}
where $\tau=(\tau_i)_{i\in\N}$ is the \emph{sequence of epochs} of the successive 
visits of the polymer to zero ($\tau_0=0$). Note that the random variables $Y_i$ 
take their values in the space $\tilde{\Gamma} := \bigcup_{n\in\N} \Gamma^n$ 
with $\Gamma := \R \times \Z$. 

To capture the role of $Y$, we introduce its empirical process,
\begin{equation}
R_M = R_M^\omega := \frac{1}{M} \sum_{i=0}^{M-1} 
\delta_{\tilde\theta^i (Y_1, \ldots, Y_M)^\per} \in \cP^\inv(\tilde\Gamma^\N),
\end{equation}
where $\cP^\inv(\tilde\Gamma^\N)$ denotes the set of probability measures on 
$\tilde\Gamma^\N$ that are invariant under the left-shift $\tilde\theta$ acting on 
$\tilde\Gamma^\N$. The superscript $\omega$ reminds us that the random variables 
$Y_i$ are functions of $\omega$. We must average over $S$ while keeping $\omega$ 
fixed. Note that, under the \emph{annealed law} $\bbP \otimes \p$, $Y$ is i.i.d.\ with 
the following marginal law $q_0$ on $\tilde\Gamma$:
\begin{equation} 
\label{eq:q0}
\begin{split}
&q_0\big( (\dd x_1, \ldots, \dd x_n) \times \{(s_1, \ldots, s_n)\} \big) \\
&\qquad := (\bbP \otimes \p)\big(Y_1 \in (\dd x_1, \ldots, \dd x_n) \times \{(s_1, \ldots, s_n)\}\big) \\
&\qquad = K(n) \, \nu(\dd x_1) \cdots \nu(\dd x_n) \,
\p\big( (S_1, \ldots, S_n) = (s_1, \ldots, s_n) \,\big|\, \tau_1 = n \big),\\
&n\in\N,\,x_1,\ldots,x_n \in \R,\,s_1,\ldots,s_n \in \Z, 
\end{split}
\end{equation}
where $K(n) := \p(\tau_1 = n)$ and $\nu(\dd x) := \bbP(\omega_1 \in \dd x)$. 

The \emph{specific relative entropy of $Q$ w.r.t.\ $q_0^{\otimes\N}$} is defined by
\begin{equation}
\label{eq:spentrdef}
H(Q \mid q_0^{\otimes\N}) := \lim_{N\to\infty} \frac{1}{N}\, h(\tilde\pi_N Q \mid q_0^N),
\end{equation}
where $\tilde\pi_N Q \in \cP(\tilde\Gamma^N)$ denotes the projection of $Q$ onto the first 
$N$ words, $h(\,\cdot\mid\cdot\,)$ denotes relative entropy, and the limit is non-decreasing. 
The following annealed LDP is standard (see Dembo and Zeitouni~\cite[Section 6.5]{cf:DeZe}).

\begin{proposition}
\label{aLDP}
{\rm {\bf [Annealed LDP]}}
The family $(\bbP \otimes \p)(R_M^\omega \in \cdot\,)$, $M\in\N$, satisfies the LDP on 
$\cP^{\mathrm{inv}}(\tilde\Gamma^\N)$ with rate $M$ and with rate function $I^{\mathrm{ann}}$ 
given by
\begin{equation}
\label{eq:Ianndef}
I^\ann(Q) := H\big(Q \mid q_0^{\otimes\N}\big),
\qquad Q \in \cP^{\mathrm{inv}}(\tilde\Gamma^\N).
\end{equation}
This rate function is lower semi-continuous, has compact level sets, has a unique
zero at $q_0^{\otimes\N}$, and is affine.
\end{proposition}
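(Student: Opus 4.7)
The plan is to deduce the statement directly from the classical process-level (``level-3'') LDP of Donsker and Varadhan for i.i.d.\ sequences, as developed in Dembo and Zeitouni~\cite[Section~6.5]{cf:DeZe}.

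First, I would verify that under the annealed law $\bbP\otimes\p$ the word sequence $(Y_i)_{i\in\N}$ is i.i.d.\ with marginal $q_0$. This follows from three independences: the charges $(\omega_n)$ are i.i.d.\ under $\bbP$; the return-time increments $(\tau_i-\tau_{i-1})$ are i.i.d.\ under $\p$ by the strong Markov property of $S$ at $\tau_{i-1}$ combined with the recurrence assumption $\p(\tau_1<\infty)=1$; and, conditionally on $\tau_i-\tau_{i-1}=n$, the excursion $(S_{\tau_{i-1}+1},\dots,S_{\tau_i})$ has the conditional law displayed inside \eqref{eq:q0}. Multiplying these independences and reading off the joint distribution yields exactly the marginal $q_0$ of \eqref{eq:q0}.

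Once the i.i.d.\ structure has been isolated, the LDP on $\cP^{\inv}(\tilde\Gamma^\N)$ with rate function $H(\,\cdot\mid q_0^{\otimes\N})$ is the standard process-level theorem for i.i.d.\ alphabets. The remaining properties are general features of specific relative entropy: lower semicontinuity and compactness of level sets are consequences of the monotone limit in \eqref{eq:spentrdef} together with lower semicontinuity of the finite-dimensional relative entropies $h(\tilde\pi_N Q\mid q_0^N)$; the unique zero at $q_0^{\otimes\N}$ follows because $H(Q\mid q_0^{\otimes\N})=0$ forces $\tilde\pi_N Q=q_0^N$ for every $N\in\N$, hence $Q=q_0^{\otimes\N}$ by Kolmogorov's extension theorem; affinity is inherited from the ergodic decomposition, through which $H(\,\cdot\mid q_0^{\otimes\N})$ is realised as an integral over ergodic components.

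The main technical point, and the only one requiring genuine work, is that the alphabet $\tilde\Gamma=\bigcup_{n\in\N}(\R\times\Z)^n$ is not compact (nor even locally compact), since it contains Euclidean factors. Upgrading the abstract weak LDP to a full LDP with compact level sets therefore requires exponential tightness of $(R_M)_{M\in\N}$. I would establish this by combining the exponential-moment hypothesis \eqref{eq:expmom} on $\omega_1$, which controls the charge component of each letter, with the tail estimate \eqref{eq:astau} on $\tau_1$, which controls the word-length component: together they let one exhibit compact sublevels $\{Q\colon\int f\,\dd Q\le c\}$ of a suitable exponential functional exhausting $\cP^{\inv}(\tilde\Gamma^\N)$. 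This exponential tightness check is the principal obstacle; once it is in place, the remainder of the proof is a direct appeal to the general theory.
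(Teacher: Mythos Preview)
Your approach matches the paper's: the paper simply states that the annealed LDP is ``standard'' and cites Dembo and Zeitouni~\cite[Section~6.5]{cf:DeZe}, and your elaboration of why the i.i.d.\ structure under $\bbP\otimes\p$ reduces the claim to the classical process-level LDP is exactly the intended reasoning.

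One remark on your final paragraph: the exponential-tightness step you flag as ``the principal obstacle'' is in fact unnecessary here. The space $\tilde\Gamma=\bigcup_{n\in\N}(\R\times\Z)^n$ is Polish (a countable disjoint union of Polish spaces), and the process-level LDP in \cite[Corollary~6.5.15]{cf:DeZe} is stated for i.i.d.\ sequences taking values in an arbitrary Polish alphabet; it already delivers a \emph{full} LDP with a \emph{good} rate function (compact level sets), without any moment or tail hypotheses on $q_0$. Neither \eqref{eq:expmom} nor \eqref{eq:astau} is needed for Proposition~\ref{aLDP}; those assumptions enter only later, for the quenched LDP and for the application of Varadhan's lemma to the unbounded functional $\Phi_{\beta,h}$.
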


The quenched LDP is more delicate and requires extra notation. The reverse operation of 
\emph{cutting} words out of a sequence of letters is \emph{glueing} words together into a 
sequence of letters. Formally, this is done by defining a \emph{concatenation map} $\kappa$ 
from $\tilde\Gamma^\N$ to $\Gamma^\N$. This map induces in a natural way a map from 
$\cP(\tilde\Gamma^\N)$ to $\cP(\Gamma^\N)$, the sets of probability measures on 
$\tilde\Gamma^\N$ and $\Gamma^\N$ (endowed with the topology of weak convergence). 
The concatenation $q_0^{\otimes\N} \circ \kappa^{-1}$ of $q_0^{\otimes\N}$ equals 
$\nu^{\otimes\N}$, as is evident from (\ref{eq:q0}).

For $Q\in\cP^\inv(\tilde\Gamma^\N)$, let $m_Q:=E_Q(\tau_1) \in [1,\infty]$ be the average 
word length under $Q$ ($E_Q$ denotes expectation under the law $Q$ and $\tau_1$
is the length of the first word). Let
\begin{equation}
\label{eq:Pfin}
\cP^{\mathrm{inv,fin}}(\tilde\Gamma^\N) := \{Q\in\cP^{\mathrm{inv}}(\tilde\Gamma^\N)
\colon\,m_Q<\infty\}.
\end{equation}
For $Q\in\cP^{\mathrm{inv,fin}}(\tilde\Gamma^\N)$, define
\begin{equation}
\label{eq:PsiQdef}
\Psi_Q := \frac{1}{m_Q} E_Q\left[\sum_{k=0}^{\tau_1-1}
\delta_{\theta^k\kappa(Y)}\right] \in \cP^{\mathrm{inv}}(\Gamma^\N).
\end{equation}
Think of $\Psi_Q$ as the shift-invariant version of $Q\circ\kappa^{-1}$ obtained
after \emph{randomizing} the location of the origin. This randomization is necessary
because a shift-invariant $Q$ in general does not give rise to a shift-invariant
$Q\circ\kappa^{-1}$.

The following quenched LDP is a straight adaptation of the one derived in Birkner, Greven and 
den Hollander~\cite{cf:BiGrdHo}.

\begin{proposition}
\label{qLDP}
{\rm {\bf [Quenched LDP]}}
For $\bbP$-a.e.\ $\omega$ the family $\p(R_M^\omega \in \cdot\,)$, $M\in\N$, satisfies 
the LDP on $\cP^\inv(\tilde\Gamma^\N)$ with rate $M$ and with rate function given by
\begin{equation}
\label{eq:Iqueiden}
I^\que(Q) := \left\{\begin{array}{ll}
H(Q \mid q_0^{\otimes\N}) + \alpha\, m_Q\, H(\Psi_Q \mid \nu^{\otimes\N}), 
&Q \in \cP^{\inv,\fin}(\tilde\Gamma^\N),\\
\lim_{\delta\downarrow 0} \inf_{Q' \in B_\delta(Q) \cap \cP^{\inv,\fin}(\tilde\Gamma^\N)} I^\que(Q'),
&Q \in \cP^{\inv}(\tilde\Gamma^\N) \setminus \cP^{\inv,\fin}(\tilde\Gamma^\N), 
\end{array}
\right.
\end{equation}
where $\alpha$ is the exponent in \eqref{eq:astau} and $B_\delta(Q)$ is the $\delta$-ball around 
$Q$ (in any appropriate metric). This rate function is lower semi-continuous, has compact level sets, 
has a unique zero at $q_0^{\otimes\N}$, 
and is affine.
\end{proposition}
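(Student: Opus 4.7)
The proposition is a transcription to our setting of the quenched LDP for random words developed in~\cite{cf:BiGrdHo}, and my plan is to reduce to that result after a short check that the additional path structure carried by the words does not alter the argument. Under $\p$, the excursion lengths $\tau_i-\tau_{i-1}$ are i.i.d.\ with law $K$, and conditionally on these lengths the excursion paths $(S_{\tau_{i-1}+1},\ldots,S_{\tau_i})$ are i.i.d.\ and independent of $\omega$. Each word $Y_i$ thus decomposes into a letter substring cut from $\omega$ together with an independent \emph{decoration}, namely the excursion of the prescribed length. Since the decoration is independent of $\omega$ and its conditional law is determined by the word length, it enters $H(Q\mid q_0^{\otimes\N})$ only through the marginals of $Q$ but does not enter the letter-matching penalty $H(\Psi_Q\mid\nu^{\otimes\N})$, which by definition of $\Psi_Q$ is driven by the letter component of $\kappa(Y)$; modulo this bookkeeping the setup is identical to~\cite{cf:BiGrdHo}.

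For $Q\in\cP^{\inv,\fin}(\tilde\Gamma^\N)$ I would reproduce the upper bound of~\cite{cf:BiGrdHo} by a tilted change of measure that separates two costs: the cost of forcing the empirical word process close to $Q$ under the annealed reference, which produces $H(Q\mid q_0^{\otimes\N})$ exactly as in Proposition~\ref{aLDP}, and the cost of matching the fixed realization of $\omega$ against the letter marginal of $\Psi_Q$ on a window of length $\tau_M\approx M m_Q$. The factor $\alpha$ in front of the second term is the key feature, and it comes from the polynomial tail \eqref{eq:astau}: the renewal process can freely shift its next block by up to order $M^{1/\alpha}$ positions at sub-exponential cost, which exactly reduces the naive letter-matching cost $m_Q H(\Psi_Q\mid\nu^{\otimes\N})$ per word by a factor $\alpha$. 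The matching lower bound is produced by a block strategy in which, for $\bbP$-a.e.\ $\omega$, one locates via a Borel--Cantelli argument the stretches on which $\omega$ is empirically typical for the letter marginal of $\Psi_Q$, and builds a renewal configuration that visits these stretches at the prescribed frequency, the cost of the long jumps between them being precisely what produces the $\alpha$-prefactor.

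Once the LDP is established on $\cP^{\inv,\fin}(\tilde\Gamma^\N)$, the extension to the complement by the lower semicontinuous envelope formula in \eqref{eq:Iqueiden} is standard, and the listed qualitative properties (lower semicontinuity, compact level sets, unique zero at $q_0^{\otimes\N}$, affinity) are inherited from the analogous properties of $H(\,\cdot\,\mid q_0^{\otimes\N})$ and of $Q\mapsto H(\Psi_Q\mid\nu^{\otimes\N})$, together with lower semicontinuity of $Q\mapsto m_Q$. The main obstacle, and the only place where genuine work is required, is the $\alpha$-prefactor: a naive combination of Sanov's theorem with a union bound over cutting patterns would yield a prefactor $1$ rather than $\alpha$, and recovering the correct value requires the delicate cutting-and-pasting construction of~\cite{cf:BiGrdHo}, which I would transport to the present setting essentially verbatim since the excursion decorations only add bookkeeping and do not interact with the letter-matching step.
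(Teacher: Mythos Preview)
Your proposal is correct and matches the paper's approach: the paper simply states that Proposition~\ref{qLDP} is ``a straight adaptation of the one derived in Birkner, Greven and den Hollander~\cite{cf:BiGrdHo}'' without further argument, and your sketch correctly identifies why the adaptation goes through (the excursion paths act as decorations independent of $\omega$, so the letter-matching term $H(\Psi_Q\mid\nu^{\otimes\N})$ only sees the $\omega$-component) and where the $\alpha$-prefactor comes from. Your remark that the infinite-mean case is handled via the lower semicontinuous envelope is also in line with the paper, which in the Remark following the proposition explicitly disavows the explicit truncation formula originally claimed in~\cite{cf:BiGrdHo}.
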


\begin{remark}
{\rm In \cite{cf:BiGrdHo} a formula was claimed for $I^\que$ on $\cP^{\inv}(\tilde\Gamma^\N) 
\setminus \cP^{\inv,\fin}(\tilde\Gamma^\N)$ based on a truncation approximation for the average 
word length. As pointed out by Jean-Christophe Mourrat (private communication), the proof of 
this formula in \cite{cf:BiGrdHo} is flawed. The formula itself may still be correct, but no proof is 
currently available. In the present paper we will only need to know $I^\que$ on $\cP^{\inv,\fin}
(\tilde\Gamma^\N)$.}
\end{remark}


\subsection{Variational criterion for localization}
\label{ss:varcritmeth}

For $N\in\N$, let $\ell_N$ be the number of returns to zero of the polymer before epoch $N$, i.e.,
\begin{equation}
\ell_N := \max\{i \in \N_0\colon\, \tau_i \leq N\}.
\end{equation}
Let $\Phi\colon\,\tilde\Gamma \to \R$ be defined by
\begin{equation} 
\label{eq:Phi}
\Phi_{\beta,h}\big((x_1, \ldots, x_n), (s_1, \ldots, s_n)\big) 
:= \sum_{m=1}^n (\beta x_m - h) \phi(s_m).
\end{equation}
Then the constrained quenched partition function defined in \eqref{eq:Zc} can be written 
as
\begin{equation}
\label{eq:Zquerepr}
Z_{N, \beta, h}^{\omega,\rc} 
= \e\left[ \ee^{\sum_{i=1}^{\ell_N} \Phi_{\beta,h}(Y_i)} \,\ind_{\{S_N = 0\}} \right]
= \e\left[ \ee^{\ell_N \int_{\tilde\Gamma} \Phi_{\beta,h}\, \dd(\tilde\pi_1\R_{\ell_N}^\omega)} 
\,\ind_{\{S_N = 0\}} \right],
\end{equation}
while the constrained annealed partition function defined in \eqref{eq:Zcann} can be 
written as
\begin{equation}
\label{eq:Zannrepr}
Z_{N, \beta, h}^{\ann,\rc} 
= (\bbE \otimes \e)\left[ \ee^{\sum_{i=1}^{\ell_N} \Phi_{\beta,h}(Y_i)} \,\ind_{\{S_N = 0\}} \right]
= (\bbE \otimes \e)\left[ \ee^{\ell_N \int_{\tilde\Gamma} \Phi_{\beta,h}\, \dd(\tilde\pi_1\R_{\ell_N}^\omega)} 
\,\ind_{\{S_N = 0\}} \right],
\end{equation}
where $\tilde\pi_1 Q$ denotes the projection of $Q$ onto the space $\tilde\Gamma$ 
of the first word. 

With the help of Propositions~\ref{aLDP}--\ref{qLDP} we can derive the following 
\emph{variational characterization} of the annealed and the quenched critical curve.
Note that $\f^\ann(\beta,h) > 0$ if and only if $h<h_c^\ann(\beta)$ and $\f^\que(\beta,h) 
> 0$ if and only if $h<h_c^\que(\beta)$.

\begin{theorem}
\label{thm:annchar}
{\rm {\bf [Annealed localization]}}
For every $\beta,h>0$,
\begin{equation}
\label{eq:varcritann}
\f^\ann(\beta,h) > 0 \quad \iff \quad
\sup_{ {Q \in \cP^\inv(\tilde\Gamma^\N)\colon} \atop {m_Q < \infty, \,I^\ann(Q) < \infty} }
\bigg\{ \int_{\tilde\Gamma} \Phi_{\beta,h}\, \dd (\tilde\pi_1 Q) - I^\ann(Q) \bigg\} > 0.
\end{equation}
\end{theorem}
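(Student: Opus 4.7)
\medskip\noindent
\textbf{Proof plan.} The strategy is to apply Varadhan's lemma to the annealed LDP of Proposition~\ref{aLDP} in order to derive the full variational formula
\begin{equation*}
\f^\ann(\beta,h)\;=\;\sup_{\substack{Q\in\cP^\inv(\tilde\Gamma^\N) \\ m_Q<\infty,\,I^\ann(Q)<\infty}}
\frac{1}{m_Q}\left\{\int_{\tilde\Gamma}\Phi_{\beta,h}\,\dd(\tilde\pi_1 Q)-I^\ann(Q)\right\},
\end{equation*}
after which the equivalence \eqref{eq:varcritann} follows at once, since $1/m_Q>0$ on the admissible set.

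First, invoke \eqref{eq:compareann} to work with the constrained partition function $Z^{\ann,\rc}_{N,\beta,h}$ and its representation \eqref{eq:Zannrepr}. Partitioning on the number of returns $\ell_N=M$ gives
\begin{equation*}
Z^{\ann,\rc}_{N,\beta,h}\;=\;\sum_{M=1}^{N}(\bbE\otimes\p)\Bigl[\ee^{M\int_{\tilde\Gamma}\Phi_{\beta,h}\,\dd(\tilde\pi_1 R_M^\omega)}\,\ind_{\{\tau_M=N\}}\Bigr].
\end{equation*}
For each $M$ the constraint $\{\tau_M=N\}$ is (up to boundary effects) a constraint on $R_M^\omega$: under $R_M^\omega\approx Q$, the average word length equals $N/M$, so this forces $m_Q=N/M$. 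Writing $\rho:=M/N\in(0,1]$ and applying the annealed LDP at rate $M=\rho N$ together with Varadhan's lemma, the exponential rate of the $M$-th summand is $\rho N\cdot\sup\{\int\Phi_{\beta,h}\,\dd(\tilde\pi_1 Q)-I^\ann(Q):\,m_Q=1/\rho\}$. Optimizing over $\rho$ and absorbing the constraint into the sup yields the displayed formula for $\f^\ann(\beta,h)$. The equivalence in \eqref{eq:varcritann} is then immediate: a near-maximizer with a positive bracket produces $\f^\ann>0$, and conversely $\f^\ann>0$ forces the bracket to be positive for some admissible $Q$.

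The main obstacle is that $\Phi_{\beta,h}$ is \emph{unbounded} on $\tilde\Gamma$: for a word of length $n$ it is dominated by $\|\phi\|_\infty\sum_{m=1}^{n}(\beta|x_m|+|h|)$, which is uncontrolled in both the charges $x_m$ and the length $n$. This rules out a direct application of Varadhan's lemma. I would deal with this by a two-step truncation: bound $\Phi_{\beta,h}$ by $\Phi_{\beta,h}^{(L)}:=(\Phi_{\beta,h}\vee(-L))\wedge L$ (uniformly continuous and bounded), pass to the limit in the variational formula for the truncated model, and control the error using the exponential moment hypothesis \eqref{eq:expmom} and the exponential tightness of $R_M^\omega$ provided by the LDP. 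This is precisely the route followed in \cite{cf:Bi} and \cite{cf:BodHoOp}, and the adaptation here is routine because $\phi$ is bounded by \eqref{eq:phidecay}, so only the $\omega$-integrability enters non-trivially.

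A secondary technical point is the handling of the boundary constraint $\{\tau_M=N\}$: the LDP lower bound at fixed $\rho$ is obtained by restricting to a small neighbourhood of a chosen $Q$ with $m_Q=1/\rho$ intersected with $\{|\frac1M\sum_{i=1}^M|Y_i|-1/\rho|<\delta\}$ and invoking a local CLT / renewal estimate to pay a negligible polynomial cost, while the upper bound uses the lower semicontinuity of $Q\mapsto m_Q$ to ensure that the sub-level sets $\{m_Q\le 1/\rho\}$ are closed, so that the contraction principle applies cleanly. For the quenched critical curve the same scheme is followed with $I^\ann$ replaced by $I^\que$ (Proposition~\ref{qLDP}), which is exactly the content of Theorem~\ref{thm:quechar}.
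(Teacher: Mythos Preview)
Your strategy is essentially the one the paper follows: apply Varadhan's lemma to the annealed LDP of Proposition~\ref{aLDP} after an approximation argument that copes with the unboundedness of $\Phi_{\beta,h}$, and then read off the positivity criterion. Two differences are worth pointing out.

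\medskip
\textbf{Approximation device.} The paper does \emph{not} truncate $\Phi_{\beta,h}$ at level $L$. Instead it introduces a penalty $-g\,m_Q$ per letter (equations \eqref{eq:form1}--\eqref{eq:glim}), applies Varadhan's lemma with the penalty in place, and then sends $g\downarrow 0$. The justification is the key entropy bound (property~(2) in Section~\ref{ss:Varadhan}): $\Phi^*_{\beta,h}(Q)\le \gamma\,h(\tilde\pi_1 Q\mid q_0)+K$, which ties the size of $\Phi^*_{\beta,h}$ directly to the rate function. Your proposed truncation $\Phi^{(L)}=(\Phi\vee(-L))\wedge L$ does make the functional bounded, but the removal of the truncation still has to control words that are simultaneously long and carry many moderate charges; ultimately you will need the same entropy bound to make ``routine'' precise. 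So your route is fine, but expect to re-derive property~(2) along the way rather than bypass it.

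\medskip
\textbf{The $1/m_Q$ factor.} Your displayed formula
\[
\f^\ann(\beta,h)=\sup_{Q}\frac{1}{m_Q}\Big\{\textstyle\int\Phi_{\beta,h}\,\dd(\tilde\pi_1 Q)-I^\ann(Q)\Big\}
\]
is the correct expression for the free energy at rate $N$ (letter scale), and your explicit handling of the constraint $\{\tau_M=N\}$ via $\rho=M/N$ makes this transparent. The paper's equation \eqref{eq:glimaltalt} writes the supremum \emph{without} the $1/m_Q$ factor; whether this is a slip or a tacit choice of word-scale rate, it is immaterial for the theorem because $m_Q\in[1,\infty)$, so the two suprema are strictly positive together. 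Your observation that the equivalence \eqref{eq:varcritann} follows from $1/m_Q>0$ is exactly the point.

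\medskip
In short: correct plan, same architecture as the paper; the paper's $g$-penalty replaces your contraction-on-$\rho$ step and your $L$-truncation, and both approaches ultimately rest on the entropy domination bound for $\Phi^*_{\beta,h}$.
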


\begin{theorem}
\label{thm:quechar}
{\rm {\bf [Quenched localization]}}
For every $\beta,h>0$,
\begin{equation}
\label{eq:varcrit}
\f^\que(\beta,h) > 0 \quad \iff \quad
\sup_{ {Q \in \cP^\inv(\tilde\Gamma^\N)\colon} \atop {m_Q < \infty, \, I^\ann(Q) < \infty} }
\bigg\{ \int_{\tilde\Gamma} \Phi_{\beta,h}\, \dd (\tilde\pi_1 Q) - I^\que(Q) \bigg\} > 0.
\end{equation}
\end{theorem}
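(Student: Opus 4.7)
The plan is to establish the variational identity
\begin{equation*}
\f^\que(\beta,h) \;=\; \sup_{Q\in\cP^\inv(\tilde\Gamma^\N):\,m_Q<\infty} \frac{1}{m_Q}\bigg\{\int_{\tilde\Gamma}\Phi_{\beta,h}\,\dd(\tilde\pi_1 Q)-I^\que(Q)\bigg\}
\end{equation*}
by combining the empirical-process representation \eqref{eq:Zquerepr} with the quenched LDP of Proposition~\ref{qLDP} via Varadhan's lemma. Theorem~\ref{thm:quechar} then follows at once: the factor $1/m_Q>0$ preserves the sign of the supremum, and any $Q$ for which the bracket is strictly positive must have $I^\que(Q)<\infty$, hence also $I^\ann(Q)\le I^\que(Q)<\infty$ by the formula in \eqref{eq:Iqueiden}, so restricting the supremum to $\{I^\ann(Q)<\infty\}$ costs nothing.

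First I would split the partition function according to the number of returns to zero,
\begin{equation*}
Z^{\omega,\rc}_{N,\beta,h} \;=\; \sum_{M=1}^{N} z_{N,M}^\omega, \qquad
z_{N,M}^\omega := \e\!\left[\ee^{M\int_{\tilde\Gamma}\Phi_{\beta,h}\,\dd(\tilde\pi_1 R_M^\omega)}\,\ind_{\{\tau_M=N\}}\right],
\end{equation*}
and use the trivial bound $\frac{1}{N}\log Z^{\omega,\rc}_{N,\beta,h}=\max_{1\le M\le N}\frac{1}{N}\log z_{N,M}^\omega+O((\log N)/N)$. For $M=M_N$ with $M_N/N\to 1/m\in(0,1]$, Varadhan's lemma applied to $R_{M_N}^\omega$, together with Proposition~\ref{qLDP} and the constraint $\tau_{M_N}=N$ (which in the LDP limit corresponds to $m_Q=m$), yields $\bbP$-a.s.
\begin{equation*}
\lim_{N\to\infty}\frac{1}{M_N}\log z_{N,M_N}^\omega
\;=\; \sup_{Q\in\cP^\inv(\tilde\Gamma^\N):\,m_Q=m}\bigg\{\int_{\tilde\Gamma}\Phi_{\beta,h}\,\dd(\tilde\pi_1 Q)-I^\que(Q)\bigg\}.
\end{equation*}
Multiplying by $M_N/N\to 1/m$ and supremising over $m\in[1,\infty)$ produces the claimed identity. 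The matching lower bound, showing that any admissible $Q^*$ contributes $\frac{1}{m_{Q^*}}\{\int\Phi_{\beta,h}\,\dd(\tilde\pi_1 Q^*)-I^\que(Q^*)\}$ to $\f^\que(\beta,h)$, follows from the LDP lower bound on a $\delta$-ball around $Q^*$ combined with the constraint $\tau_{M_N}\approx m_{Q^*}M_N$, then sending $\delta\downarrow 0$.

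The main obstacle is making the Varadhan step rigorous against three intertwined difficulties. First, the map $Q\mapsto\int\Phi_{\beta,h}\,\dd(\tilde\pi_1 Q)$ is \emph{unbounded}, since $\Phi_{\beta,h}$ grows linearly in the charges; this is handled by truncating $\omega_n$ at a level $K$, applying Varadhan to the bounded version, and using the exponential-moment hypothesis \eqref{eq:expmom} to let $K\to\infty$. Second, the deterministic constraint $\tau_{M_N}=N$ must be translated into $m_Q=m$, but $Q\mapsto m_Q$ is only lower semicontinuous; one opens a window $|M/N-1/m|\le\varepsilon$, applies the LDP upper and lower bounds on the resulting closed and open level sets of $m_Q$, and sends $\varepsilon\downarrow 0$ via standard Laplace-type arguments. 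Third, Proposition~\ref{qLDP} gives an explicit formula for $I^\que$ only on $\cP^{\inv,\fin}(\tilde\Gamma^\N)$, but since $m_Q<\infty$ is enforced throughout this is adequate; the lower semicontinuity and compact level sets of $I^\que$ then justify both the Varadhan upper bound and the interchange of suprema over $m$ and over $\{Q:m_Q=m\}$ needed at the end.
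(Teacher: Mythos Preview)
Your route is genuinely different from the paper's. The paper does \emph{not} split the partition function according to the number of returns and then constrain $m_Q=m$. Instead it introduces a per-letter penalty $-g$, works with the \emph{annealed} rate function $I^\ann$ restricted to the set $\cR$ of shift-invariant laws whose concatenation has the correct asymptotic letter statistics, applies Varadhan to obtain a formula for the penalised free energy $\f^\que(\beta,h;g)$, and only then lets $g\downarrow 0$. In that last limit a density argument (for every $Q$ there exist $Q_n\in\cR$ with $I^\ann(Q_n)\to I^\que(Q)$) converts $I^\ann$ on $\cR$ into $I^\que$ on the full space. The entire package --- the entropy control $\Phi^*_{\beta,h}(Q)\le\gamma\,h(\tilde\pi_1 Q\mid q_0)+K$ and the penalty-$\cR$-conversion mechanism --- is imported from Bolthausen, den~Hollander and Opoku~\cite{cf:BodHoOp} and is what makes the discontinuity and unboundedness of $\Phi^*_{\beta,h}$ and of $m_Q$ tractable.

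Your programme is plausible in spirit but has a real gap at the Varadhan step. Truncating $\omega$ at level $K$ changes the disorder sequence, and the quenched LDP of Proposition~\ref{qLDP} is stated for the \emph{original} $\omega$; you would have to redo the LDP for the truncated environment and control the error as $K\to\infty$, which is not automatic. More seriously, the constraint $\tau_{M_N}=N$ is the thin set $\{m_{R_{M_N}^\omega}=N/M_N\}$, and ``opening a window'' destroys the identity $Z^{\omega,\rc}_N=\sum_M z_{N,M}^\omega$. Even granting the window, the set $\{m_Q=m\}$ is neither open nor closed, so matching the LDP upper and lower bounds after $\varepsilon\downarrow 0$ requires continuity properties of the supremand that you have not established. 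The paper's penalty device is precisely engineered to avoid ever having to localise on a level set of $m_Q$: the term $-g\,m_Q$ makes $\Phi^*_{\beta,h}(Q)-g\,m_Q$ effectively bounded above (via the entropy bound (2)), so Varadhan applies cleanly, and the $g\downarrow 0$ limit is handled by an approximation in $\cR$ rather than by a continuity argument on $m_Q$.
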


\noindent
As we will see below, the role of the conditions $m_Q<\infty$ and $I^\ann(Q)< \infty$ under the two
suprema is to ensure that $\int_{\tilde\Gamma} \Phi_{\beta,h}\,\dd(\tilde\pi_1 Q) < \infty$, so that the suprema 
are \emph{well defined}. The condition $m_Q<\infty$ under the second supremum allows us to use the 
\emph{representation} in \eqref{eq:Iqueiden}. We will see in Section~\ref{s:MBG} how the variational 
formulas in \eqref{eq:varcritann}--\eqref{eq:varcrit} can be exploited.


\subsection{Proof of Theorems~\ref{thm:annchar}--\ref{thm:quechar}}
\label{ss:Varadhan}

The proof uses arguments developed in Bolthausen, den Hollander and Opoku~\cite{cf:BodHoOp}. 
Theorems~\ref{thm:annchar}--\ref{thm:quechar} follow from Propositions~\ref{aLDP}--\ref{qLDP} 
with the help of Varadhan's lemma applied to \eqref{eq:Zquerepr}--\eqref{eq:Zannrepr}. The only 
difficulty we need to deal with is the fact that both $Q \mapsto m_Q$ and $Q \mapsto \Phi^*_{\beta,h}(Q) 
= \int_{\tilde\Gamma} \Phi_{\beta,h}\, \dd(\tilde\pi_1Q)$ are neither bounded nor continuous in the weak 
topology. Therefore an \emph{approximation argument} is required, which is worked out in detail
in \cite[Appendix A--D]{cf:BodHoOp} for the case of the copolymer interaction potential in \eqref{eq:phipincop}. 
This approximation argument shows why the restriction to $m_Q<\infty$ and $I^\ann(Q)<\infty$ may 
be imposed, a key ingredient being that $I^\ann(Q)<\infty$ implies $\Phi^*_{\beta,h}(Q)<\infty$. The 
proof in \cite[Appendix A--D]{cf:BodHoOp} readily carries over because our condition on the interaction 
potential in \eqref{eq:phidecay} reflects the properties of the copolymer interaction potential. We 
sketch the main line of thought. Throughout the sequel $\beta,h>0$ are fixed. 

\begin{proof}[Proof of Theorem~\ref{thm:quechar}]
Following the argument in \cite[Appendix A]{cf:BodHoOp}, we show that
\begin{itemize}
\item[(1)]
For every $g>0$, $M \mapsto \Psi_{\beta,h}(R_M^\omega)-gm_{R_M^\omega}$ is bounded 
$\omega$-a.s.  
\item[(2)]
For every $\rho\in\cP(\N)$, $\nu\in\cP(\R)$ and $p=(p_n)_{n\in\N}$ with $p_n \in \cP(\Z^n)$, there 
exist $\gamma>0$ and $K=K(\rho,\nu,p;\gamma)>0$ such that $\Phi^*_{\beta,h}(Q) \leq \gamma 
h(\tilde{\pi}_1Q \mid q_{\rho,\nu,p}) + K$ for all $Q\in\cP^{\mathrm{inv}}(\tilde\Gamma^\N)$ with 
$h(\tilde{\pi}_1Q \mid q_{\rho,\nu,p})<\infty$, where (compare with \eqref{eq:q0})
\begin{equation}
\begin{aligned}
&q_{\rho,\nu,p}\big( (\dd x_1, \ldots, \dd x_n) \times \{(s_1, \ldots, s_n)\} \big) 
= \rho(n) \, \nu(\dd x_1) \cdots \nu(\dd x_n) \, p_n(s_1,\ldots,s_n),\\
&n\in\N,\,x_1,\ldots,x_n \in \R,\,s_1,\ldots,s_n \in \Z. 
\end{aligned}
\end{equation}
\end{itemize}
The proof uses the fact that the conditions in \eqref{eq:phidecay} allow us to approximate $\phi$ by 
a multiple of $\phi^{\mathrm{cop}}$ (recall \eqref{eq:phipincop}) \emph{uniformly} on $\Z \setminus 
[-L,L]$ at arbitrary precision as $L\to\infty$. The proof also uses a concentration of measure estimate 
for the disorder, which is proved in  \cite[Appendix D]{cf:BodHoOp}.
 
For $g>0$, define the quenched free energy 
\begin{equation}
\label{eq:form1}
\f^\que(\beta,h;g) := \lim_{N\to\infty} \frac{1}{N} \log Z_{N, \beta, h, g}^\que, 
\end{equation}
where 
\begin{equation}
\label{eq:form2}
Z_{N, \beta, h, g}^\que := \e\left[\ee^{N\big\{\Phi^*_{\beta,h}(R_M^\omega)
- gm_{R_M^\omega}\big\}}\right]
\end{equation}
is the quenched partition function in which every letter gets an energetic penalty $-g$. Following 
the argument in \cite[Appendix B]{cf:BodHoOp}, we use (1) and (2) to show that, for every $g>0$,
\begin{equation}
\label{eq:glim}
\f^\que(\beta,h;g) = \sup_{ {Q \in \cR\colon}
\atop {m_Q < \infty, \, I^\ann(Q) < \infty} }
\bigg\{ \int_{\tilde\Gamma^\N} \Phi^*_{\beta,h}(Q) - g m_Q - I^\ann(Q) \bigg\},
\end{equation}
where $\cR$ is the set of shift-invariant probability measures under which the concatenation of 
words produces a letter sequence that has the \emph{same asymptotic statistics as a typical 
realisation of} $Y$, i.e., 
\begin{equation}
\cR := \left\{Q \in \cP^\inv(\tilde\Gamma^\N)\colon\, w-\lim_{M\to\infty} \frac{1}{M} \sum_{k=0}^{M-1}
\delta_{\theta^k\kappa(Y)} = \mu^{\otimes\N}\,\, Q-\text{a.s.}\right\} 
\end{equation}
($w-\lim$ means weak limit). The proof of \eqref{eq:glim} carries over verbatim. What \eqref{eq:glim}
says is that Varadhan's lemma applies to \eqref{eq:form1}--\eqref{eq:form2} because of the 
control enforced by (1) and (2). 

Following the argument in \cite[Appendix C]{cf:BodHoOp}, we show that
\begin{equation}
\lim_{g \downarrow 0} \f^\que(\beta,h;g) = \f^\que(\beta,h)
\end{equation}
with
\begin{equation}
\label{eq:glimalt}
\f^\que(\beta,h) = \sup_{ {Q \in \cP^\inv(\tilde\Gamma^\N)\colon}
\atop {m_Q < \infty, \, I^\ann(Q) < \infty} }
\bigg\{ \int_{\tilde\Gamma^\N} \Phi^*_{\beta,h}(Q) - I^\que(Q) \bigg\}.
\end{equation}
Here, in the passage from \eqref{eq:glim} to \eqref{eq:glimalt}, the constraint in $\cR$ disappears from 
the variational characterization, while $I^\ann(Q)$ is replaced by $I^\que(Q)$. The reason is that for 
every $Q \in \cP^\inv(\tilde\Gamma^\N)$ there exists a sequence $(Q_n)_{n\in\N}$ in $\cR$ such that 
$Q = w-\lim_{n\to\infty} Q_n = Q$ and $\lim_{n\to\infty} I^\ann(Q_n) = I^\que(Q)$. The proof of 
\eqref{eq:glimalt} carries over verbatim. The supremum in the right-hand side of \eqref{eq:glimalt} is 
the same as the supremum in the right-hand side of \eqref{eq:varcrit}. 
\end{proof}

\begin{proof}[Proof of Theorem~\ref{thm:annchar}]
Varadhan's lemma also applies to
\begin{equation}
\label{eq:form2alt}
Z_{N, \beta, h}^\ann := \e\left[\ee^{N\Phi^*_{\beta,h}(R_M^\omega)}\right],
\end{equation} 
and yields
\begin{equation}
\label{eq:form1alt}
\f^\ann(\beta,h) := \lim_{N\to\infty} \frac{1}{N} \log Z_{N, \beta, h}^\ann 
\end{equation}
with 
\begin{equation}
\label{eq:glimaltalt}
\f^\ann(\beta,h) = \sup_{ {Q \in \cP^\inv(\tilde\Gamma^\N)\colon}
\atop {m_Q < \infty, \, I^\ann(Q) < \infty} }
\bigg\{ \int_{\tilde\Gamma^\N} \Phi^*_{\beta,h}(Q) - I^\ann(Q) \bigg\}.
\end{equation}
Again, this works because of the control enforced by (2). The supremum in the right-hand side of 
\eqref{eq:glimaltalt} is the same as the supremum in the right-hand side of \eqref{eq:varcritann}. 
\end{proof}


\section{Proof of Theorem~\ref{thm:boundshc}}
\label{s:MBG}

The upper bound in \eqref{eq:boundshc} is immediate from Theorems~\ref{thm:annchar}--\ref{thm:quechar}  
and the inequality $I^\que \geq I^\ann$ (see also \eqref{eq:annbound}). In Sections~\ref{ss:varcrit}--\ref{ss:applvarcrit} 
we prove the lower bound in \eqref{eq:boundshc}. This lower bound is the analogue of what for the copolymer 
model is called the Monthus-Bodineau-Giacomin lower bound (see Giacomin~\cite{cf:Gi1}, den 
Hollander~\cite{cf:dHo}).


\subsection{A sufficient criterion for quenched localization}
\label{ss:varcrit}

The quenched rate function can be written as
\begin{equation}
I^\que(Q) = (1+\alpha) I^\ann(Q) - \alpha\, R(Q), \qquad  m_Q < \infty,
\end{equation}
with
\begin{equation}
R(Q) := H(Q \mid q_0^{\otimes\N}) - m_Q\, H(\Psi_Q \mid \nu^{\otimes\N}).
\end{equation}
It can be shown that $R(Q) \geq 0$ for all $Q\in \cP^\inv(\tilde\Gamma^\N)$: $R(Q)$ has 
the meaning of a \emph{concatenation entropy} (see Birkner, Greven and den 
Hollander~\cite{cf:BiGrdHo}). Therefore, dropping this term in \eqref{eq:varcrit} we obtain 
the following \emph{sufficient} criterion for quenched localization: 
\begin{equation}
\label{eq:varcritsimple}
\f^\que(\beta,h) > 0 \quad \Longleftarrow 
\sup_{ {Q \in \cP^\inv(\tilde\Gamma^\N)\colon} \atop {m_Q < \infty, \, I^\ann(Q) < \infty} }
\bigg\{ \int_{\tilde\Gamma} \Phi\, \dd (\tilde\pi_1 Q) - (1+\alpha)I^\ann(Q) \bigg\} > 0.
\end{equation}
The right-hand side resembles the \emph{necessary and sufficient} criterion for annealed localization 
in \eqref{eq:varcritann}, the only difference being the extra factor $1+\alpha$.


\subsection{Reduction}
\label{ss:solvarcrit}

Among the laws $Q \in \cP^\inv(\tilde\Gamma^\N)$ with a given marginal law $q \in \cP(\tilde\Gamma)$, 
the product law $Q = q^{\otimes\N}$ is the unique minimizer of the specific relative entropy $H(Q \mid
q_0^{\otimes\N})$. Therefore the right-hand side of \eqref{eq:varcritsimple} reduces to
\begin{equation}
\label{eq:varcritsimple2}
\sup_{ {q \in \cP^\inv(\tilde\Gamma)\colon} \atop  {m_q < \infty,\,h(q \mid q_0) < \infty} }
\bigg\{ \int_{\tilde\Gamma} \Phi\, \dd q - (1+\alpha) h(q \mid q_0) \bigg\} > 0,
\end{equation}
where $h(\cdot \mid \cdot)$ denotes relative entropy.

We next show that \eqref{eq:varcritsimple} reduces to an even simpler criterion. To that end, 
let $C_N := \bigcup_{n=1}^N \Gamma^n$ be the subset of words of length at most $N$. Consider the 
law $\hat q_N \in \cP(\tilde\Gamma)$ defined by
\begin{equation}
\frac{\dd\hat q_N}{\dd q_0} := \frac{\ee^{\frac{1}{1+\alpha}\Phi}\, \ind_{C_N}}{\cN_{\alpha,N}},
\end{equation}
where
\begin{equation} 
\label{eq:finN}
\cN_{\alpha,N} := \int_{\tilde\Gamma} \ee^{\frac{1}{1+\alpha}\Phi} \, \ind_{C_N} \, \dd q_0 < \infty
\end{equation}
is the normalizing constant. The latter is finite because, by \eqref{eq:Phi}, $\Phi$ restricted to $C_N$ is 
the sum of at most $N$ random variables with finite exponential moments. Note that also
\begin{equation} 
\label{eq:finent}
\int_{\tilde\Gamma} \Phi \, \ee^{\frac{1}{1+\alpha}\Phi} \, \ind_{C_N} \, \dd q_0 < \infty,
\end{equation}
which yields $h(\hat q_N \mid q_0) < \infty$. Trivially, $m_{\hat q_N} \leq N < \infty$. Therefore 
we are allowed to pick $q = \hat q_N$ in \eqref{eq:varcritsimple2}, so that \eqref{eq:varcritsimple2} 
is satisfied when
\begin{equation}
(1+\alpha) \log \cN_{\alpha,N} > 0.
\end{equation}
Since $N$ is arbitrary, this in turn is satisfied when 
\begin{equation}
\label{eq:varcritsimple3}
\cN_\alpha > 1 \quad \text{ with } \quad \cN_\alpha := \sup_{N\in\N} \cN_{\alpha,N} 
= \int_{\tilde\Gamma} \ee^{\frac{1}{1+\alpha}\Phi} \, \dd q_0,
\end{equation}
where $\cN_\alpha = \infty$ is allowed. Conversely, if $\cN_\alpha \leq 1$, then \eqref{eq:varcritsimple2} 
is not satisfied. Indeed, as soon as $\cN_\alpha < \infty$ we may introduce the law $\hat q \in 
\cP(\tilde\Gamma)$ defined by
\begin{equation}
\frac{\dd\hat q}{\dd q_0} := \frac{\ee^{\frac{1}{1+\alpha}\Phi}}{\cN_\alpha},
\end{equation}
and rewrite $h(q \mid q_0) = h(q \mid \hat q) - \frac{1}{1+\alpha} \int_{\tilde\Gamma} \Phi \, \dd q 
+ \log \cN_\alpha$, so that 
\begin{equation}
\int_{\tilde\Gamma} \Phi\, \dd q - (1+\alpha)h(q \mid q_0) 
= (1+\alpha) \log \cN_\alpha - h(q \mid \hat q) \leq 0,
\end{equation}
where the last inequality holds for any $q$ because $\cN_\alpha \leq 1$, and so \eqref{eq:varcritsimple2} 
fails. Thus, \eqref{eq:varcritsimple} reduces to 
\begin{equation}
\label{eq:simplloc}
\f^\que(\beta,h) > 0 \qquad \Longleftarrow \qquad \cN_\alpha > 1.
\end{equation}


\subsection{Application}
\label{ss:applvarcrit}

As we remarked below \eqref{eq:varcritsimple}, \eqref{eq:varcritann} resembles \eqref{eq:varcritsimple}, 
the only difference being the factor $1+\alpha$ instead of $1$ in front of $I^\ann(Q) = H(Q \mid q_0^{\otimes\N})$. 
Therefore, repeating the above steps and recalling \eqref{eq:varcrit}, we conclude that
\begin{equation}
\label{eq:simpllocann}
\f^\ann(\beta,h) > 0 \qquad \iff \qquad \cN_0 > 1.
\end{equation}
It follows from \eqref{eq:Phi}, \eqref{eq:Zannrepr} and \eqref{eq:varcritsimple3} that the condition 
$\cN_\alpha > 1$ is equivalent to
\begin{equation}
\f^\ann\big(\tfrac{1}{1+\alpha} \beta, \tfrac{1}{1+\alpha} h\big) > 0,
\end{equation} 
i.e., $\frac{1}{1+\alpha}h < h_c^\ann(\frac{1}{1+\alpha}\beta)$, which by \eqref{eq:simplloc} implies 
$\f^\que(\beta,h) > 0$, i.e., $h < h_c^\que(\beta)$. This completes the proof of the lower bound 
in \eqref{eq:boundshc}. 

Recalling \eqref{eq:q0}, \eqref{eq:Phi} and the function $\psi_{\beta,h}$ defined in \eqref{eq:psi},
we may write $\cN_0$ as
\begin{equation} 
\label{eq:cN}
\begin{split}
\cN_0 = \int_{\tilde\Gamma} \ee^{\Phi} \, \dd q_0
&= \sum_{m\in\N} \bbE \bigg[ \e \bigg[ \ee^{\sum_{n=1}^{m} (\beta \omega_n - h) \phi(S_n)} 
\, \ind_{\{\tau_1 = m\}} \bigg] \bigg] \\
&= \sum_{m\in\N} \e \bigg[ \ee^{\sum_{n=1}^{m} \psi_{\beta,h}(S_n)} 
\, \ind_{\{\tau_1 = m\}} \bigg].
\end{split}
\end{equation}
We will analyse this expression in Section~\ref{s:annealed} in the weak interaction limit 
$\beta,h\downarrow 0$ for the Bessel random walk. 

Note that for $h=0$ the expression in \eqref{eq:cN} reduces to
\begin{equation}
\sum_{m\in\N} \bbE \bigg[ \e \bigg[ \ee^{\sum_{n=1}^{m} \beta\omega_n\phi(S_n)} 
\, \ind_{\{\tau_1 = m\}} \bigg] \bigg].
\end{equation}
By Jensen and the fact that $\phi\not\equiv 0$, this sum is $>1$ for all $\beta>0$. Hence
$F^\ann(\beta,0)>0$ for all $\beta>0$, which settles the claim made at the end of 
Section~\ref{ss:thmsgen}.


\section{Proof of Theorems~\ref{thm:weak1}--\ref{thm:weak3}}
\label{s:annealed}

To prove our scaling results for weak interaction, we will exploit the invariance principle in \eqref{eq:invprin}.
Recall \eqref{eq:Zann}--\eqref{eq:psi}. Since $\|\psi_{\beta,h}\|_\infty$ tends to zero as $\beta,h \downarrow 0$, 
we can do a \emph{weak coupling expansion} in the spirit of Caravenna, Sun and Zygouras~\cite{cf:CaSuZy}.

The proof is long and technical. In Section~\ref{sec:strategy} we outline the general strategy, which leads to three
tasks. These tasks are carried out in Sections~\ref{sec:hatCexpr}--\ref{sec:convCk}, respectively.


\subsection{General strategy}
\label{sec:strategy}

Fix $\hat\beta, \hat h \in (0,\infty)$ and $\beta_N, h_N$ such that, as $N\to\infty$,
\begin{equation}
\label{eq:betaN}
\beta_N \sim \hat \beta \times
\begin{cases}
N^{-(1-\theta)/2}, &\theta \in (0,1-\alpha), \\
N^{-\alpha/2}, &\theta \in (1-\alpha, 2(1-\alpha)), \\
N^{-\alpha/2}, &\theta \in (2(1-\alpha),\infty), 
\end{cases}
\end{equation}
and 
\begin{equation}
\label{eq:hN}
h_N \sim \hat h  \times 
\begin{cases}
N^{-(2-\theta)/2}, &\theta \in (0,1-\alpha), \\
N^{-(2-\theta)/2}, &\theta \in (1-\alpha, 2(1-\alpha)), \\
N^{-\alpha}, &\theta \in (2(1-\alpha),\infty). 
\end{cases}
\end{equation}
We will prove that, for any $T \in (0,\infty)$,
\begin{equation}
\label{eq:scalpart0}
\lim_{N\to\infty} Z^\ann_{TN, \beta_N, h_N} 
= \hat{Z}^\ann_{T,\hat{\beta},\hat{h}}  :=
\hat{\e}\left[\exp\left( \int_0^T \hat\psi_{\hat{\beta},\hat{h}}(X_t) \,  \dd t\right)\right] 
\end{equation}
(for ease of notation we pretend that $TN$ is integer), where we set
\begin{equation}
\label{eq:psi0def}
\hat\psi_{\hat{\beta},\hat{h}}(x) :=
\begin{cases}
\tfrac12\hat{\beta}^2c^2\,|x|^{-2\theta} - \hat{h}c\,|x|^{-\theta},
&\theta \in (0,1-\alpha), \\
\rule{0pt}{1.5em}\tfrac12\hat{\beta}^2\,c^*[\phi^2]\,\hat\delta_0(x) 
- \hat{h}c\,|x|^{-\theta},
&\theta \in (1-\alpha, 2(1-\alpha)) \\
\rule{0pt}{1.5em}\big\{\tfrac12\hat{\beta}^2\,c^*[\phi^2]-\hat{h}\,c^*[\phi]\big\}\,\hat\delta_0(x),  
&\theta \in (2(1-\alpha),\infty). 
\end{cases}
\end{equation}
We recall that the \emph{renormalized} Dirac-function $\hat\delta_0(\cdot)$ is the notation introduced 
in \eqref{eq:hatdelta} to make the time integral $\int_0^T \hat\psi_{\hat{\beta},\hat{h}}(X_t)\,\dd t$ in 
\eqref{eq:scalpart0} well-defined.

Once the convergence in \eqref{eq:scalpart0} is established, Theorems~\ref{thm:weak1}-\ref{thm:weak3} 
follow. Indeed, recalling the definitions of $\hat{F}^\ann$ in \eqref{eq:hatF1}, \eqref{eq:hatF2} and 
\eqref{eq:hatF3}, respectively, we can write
\begin{equation}
\label{eq:interchange}
\begin{aligned}
\hat{F}^\ann(\hat{\beta},\hat{h}) 
& = \lim_{T\to\infty} \frac{1}{T} \log \hat{Z}^\ann_{T,\hat{\beta},\hat{h}} 
= \lim_{T\to\infty} \frac{1}{T}
\log \left(\lim_{N\to\infty} 
Z^\ann_{TN, \beta_N, h_N} \right)\\
&= \lim_{N\to\infty} N \left( \lim_{T\to\infty} \frac{1}{TN} 
\log Z^\ann_{TN, \beta_N, h_N}  \right)\\
&= \lim_{N\to\infty} N F^\ann\big( \beta_N , h_N\big),
\end{aligned}
\end{equation}
where the interchange of the limits $T\to\infty$ and $N\to\infty$ is justified in
Appendix~\ref{app:interchange}.

To prove \eqref{eq:scalpart0}, we write $Z^\ann_{TN,\beta_N,h_N}$ as a series,
recall  \eqref{eq:Zann}. Since the  potential $\phi$ is assumed to be symmetric, 
i.e., $\phi(-x) = \phi(x)$ for all $x\in\Z$, we can write $\psi_{\beta,h}(S_n) = \psi_{\beta,h}(|S_n|)$,
and hence
\begin{equation}
\label{eq:ZannWDE0}
Z^\ann_{TN,\beta_N,h_N} = \e\left[\,\prod_{n=1}^{TN} 
\left\{1+\big(\ee^{\psi_{\beta_N,h_N}(|S_n|)}-1\big)\right\}\right]
= 1 + \sum_{k \in \N} C_{TN,k}
\end{equation} 
with
\begin{equation}
\label{eq:CNkdef0}
C_{TN,k} := \sum_{1 \leq n_1 < \cdots < n_k \leq TN} 
\e\left[\,\prod_{\ell=1}^k \chi_{\beta_N,h_N}(|S_{n_\ell}|)\right] , \qquad
\chi_{\beta,h}(x) := e^{\psi_{\beta,h}(x)}-1.
\end{equation}
We can write a similar decomposition for $\hat{Z}^\ann_{T,\hat{\beta},\hat{h}}$, namely,
\begin{equation}
\label{eq:hatZ0}
\hat{Z}^\ann_{T,\hat{\beta},\hat{h}} = 1 + \sum_{k \in \N}\hat{C}_{T,k}
\end{equation}
with
\begin{equation}
\label{eq:hatCexpr0}
\hat{C}_{T,k} := \frac{1}{k!} \, \hat{\e}\bigg[ 
\left( \int_0^T \dd t\,\hat\psi_{\hat{\beta},\hat{h}}(X_t)\right)^k \bigg].
\end{equation}
Formally, if we rewrite the $k$-th power of the integral as a $k$-fold integral and afterwards switch the 
integral and the expectation, then we get the expression
\begin{equation}
\label{eq:hatCexpr}
\begin{split}
\hat{C}_{T,k}
= \int\limits_{\substack{0 < t_1<\cdots<t_k < T \\
x_1, \ldots, x_k \in [0,\infty)}} 
\prod_{\ell=1}^k  \Big\{ g_{t_\ell - t_{\ell-1}}(x_{\ell-1}, x_\ell)
\, \hat\psi_{\hat{\beta},\hat{h}}\big(x_\ell\big) \Big\} \,
\dd t_1 \cdots \dd t_k\,
\dd x_1 \cdots \dd x_k
\end{split}
\end{equation}
with $t_0 := x_0 := 0$. This is justified by Fubini's theorem in the first regime in \eqref{eq:psi0def}, because 
$\hat\psi_{\hat{\beta},\hat{h}}(x)$ is a genuine function (see \eqref{eq:psi0def}). However, the second and 
third regime in \eqref{eq:psi0def} are more delicate because $\hat\psi_{\hat{\beta},\hat{h}}(x)$ contains the 
formal term $\hat\delta_0(x)$. We claim that \eqref{eq:hatCexpr} holds in these regimes as well, provided 
we use the interpretation
\begin{equation} 
\label{eq:recipe}
\text{``} \ g_{t_\ell - t_{\ell-1}}(x_{\ell-1}, x_\ell) \, \hat\delta_0(x_\ell) \, \dd x_\ell  \ \text{''} 
:= \hat g_{t_\ell - t_{\ell-1}}(x_{\ell-1}, 0) \, \delta_0(\dd x_\ell),
\end{equation}
where $\hat g$ is the function defined in \eqref{eq:hatgt} and $\delta_0$ is the usual Dirac measure at zero,
both of which are proper. We prove \eqref{eq:hatCexpr} in Section~\ref{sec:hatCexpr} below.

\begin{remark}
{\rm In the third regime, both terms in $\hat\psi_{\hat{\beta},\hat{h}}(x)$ contain $\hat\delta_0(x)$ (see 
\eqref{eq:psi0def}). Therefore \eqref{eq:hatCexpr} can be simplified, by the recipe in \eqref{eq:recipe}, 
to give
\begin{equation} 
\label{eq:hatCalte}
\begin{split}
\hat{C}_{T,k} 
&= \big\{\tfrac12\hat{\beta}^2\,c^*[\phi^2]-\hat{h}\,c^*[\phi]\big\}^k
\int\limits_{0 < t_1<\cdots<t_k < T} 
\prod_{\ell=1}^k \Big\{ \hat g_{t_\ell - t_{\ell-1}}(0,0) \Big\} \,
\dd t_1 \cdots \dd t_k \\
& = \big\{\tfrac12\hat{\beta}^2\,c^*[\phi^2]-\hat{h}\,c^*[\phi]\big\}^k
\int\limits_{0 < t_1<\cdots<t_k < T} 
\prod_{\ell=1}^k \frac{1}{(t_\ell - t_{\ell-1})^{1-\alpha}} \,
\dd t_1 \cdots \dd t_k  \\
& = \frac{\big[ \big\{\tfrac12\hat{\beta}^2\,c^*[\phi^2]-\hat{h}\,c^*[\phi]\big\}
T^\alpha \, \Gamma(\alpha) \big]^k}{\Gamma(\alpha k)},
\end{split}
\end{equation}
where the last equality can be seen to hold by the normalisation constant
of the Dirichlet distribution (see also \eqref{eq:Dirichlet}).
Therefore \eqref{eq:hatCexpr} is delicate only in the second regime.}
\end{remark}

We will prove in Section~\ref{sec:negligible-tails} that the series in \eqref{eq:ZannWDE0} and 
\eqref{eq:hatZ0} have
\emph{negligible tails}:
\begin{equation}
\label{eq:negligible-tails}
\begin{gathered}
\forall\, T \in (0,\infty) \ \ \forall\, \epsilon \in (0,1) \ \ \exists\, \bar{K} = \bar{K}(T,\epsilon) < \infty\colon \\
\limsup_{N\to\infty} \, \sum_{k > \bar{K}} C_{TN,k} < \epsilon \,, \qquad
\sum_{k > \bar{K}} \hat{C}_{T,k} < \epsilon.
\end{gathered}
\end{equation}
We will show in Section~\ref{sec:convCk} that 
\begin{equation}
\label{eq:convCk}
\forall\, k \in \N\colon \qquad
\lim_{N\to\infty} C_{TN,k} = \hat{C}_{T,k}.
\end{equation}
The last two equations combine to yield \eqref{eq:scalpart0} and complete the proof. \qed

\subsection{Proof of \eqref{eq:hatCexpr}}
\label{sec:hatCexpr}

For $\epsilon > 0$ we define a genuine function $\hat\delta_0^\epsilon(x)$ that is meant to 
approximate $\hat\delta_0(x)$ as $\epsilon \downarrow 0$ (recall \eqref{eq:hatLdef}--\eqref{eq:hatdelta}):
\begin{equation}
\label{eq:hatdeltaepsilon}
\hat\delta_0^\epsilon(x) := \frac{c_\alpha}{\epsilon^{2(1-\alpha)}} \, \ind_{(0,\epsilon)}(x).
\end{equation}
We also define an approximate version $\hat\psi^\epsilon(x)$ of $\hat\psi(x)$ in \eqref{eq:psi0def} by
setting (suppressing the dependence on $\hat{\beta},\hat{h}$)
\begin{equation}
\label{eq:psi0epsilondef}
\begin{split}
\hat\psi^\epsilon(x) := 
&\text{ the expression obtained from \eqref{eq:psi0def} after replacing $\hat\delta_0(x)$ by $\hat\delta_0^\epsilon(x)$},
\end{split}
\end{equation}
so that
\begin{equation}
\label{eq:psiconv}
\lim_{\epsilon \downarrow 0} \int_0^T \hat\psi^\epsilon(X_t) \, \dd t
= \int_0^T \hat\psi(X_t) \, \dd t \quad \text{in probability.} 
\end{equation}
Below we prove that this convergence \emph{also} holds in $L^k$, $k\in\N$. Then, via \eqref{eq:hatCexpr0}, 
we can write
\begin{equation}
\label{eq:hatCuno}
\begin{split}
\hat{C}_{T,k} &= \frac{1}{k!} \, \lim_{\epsilon\downarrow 0} \, \hat{\e}\bigg[ 
\left( \int_0^T \dd t\,\hat\psi^\epsilon(X_t)\right)^k \bigg] \\
&= \lim_{\epsilon\downarrow 0} \, 
\int\limits_{\substack{0 < t_1<\cdots<t_k < T \\
x_1, \ldots, x_k \in [0,\infty)}} 
\prod_{\ell=1}^k  \Big\{ g_{t_\ell - t_{\ell-1}}(x_{\ell-1}, x_\ell)
\, \hat\psi^\epsilon\big(x_\ell\big) \Big\} \,
\dd t_1 \cdots \dd t_k\,
\dd x_1 \cdots \dd x_k,
\end{split}
\end{equation}
which coincides precisely with our target equation \eqref{eq:hatCexpr} with the recipe in \eqref{eq:recipe} via
the characterization of $\hat g$ in \eqref{eq:hatgt0}--\eqref{eq:hatgt} (note that $x \mapsto g_t(x,y)$ is continuous 
on $[0,\infty)$).

To prove that \eqref{eq:psiconv} holds in $L^k$, $k\in\N$, it is enough to show that \emph{all 
moments of $\int_0^T \hat\psi^\epsilon(X_t) \, \dd t$ are uniformly bounded}:
\begin{equation}
\label{eq:unibound}
\forall\, n \in \N\colon \qquad
\sup_{\epsilon \in (0,1)}  \hat{\e} \bigg[ \bigg(\int_0^T \hat\psi^\epsilon(X_t) \, \dd t \bigg)^n \bigg] < \infty.
\end{equation}
Since $\hat\psi^\epsilon(x)$ is a genuine function, Fubini's theorem tells us that, for every $\epsilon > 0$,
\begin{equation}
\begin{split}
\frac{1}{n!} \, \hat{\e}_0 \bigg[ \bigg( \int_0^T \hat\psi^\epsilon(X_t) \, \dd t \bigg)^n \bigg]
& =  \int\limits_{0 < t_1<\cdots<t_n < T} 
\hat{\e}_0 \bigg[ \prod_{\ell=1}^n  \hat\psi^\epsilon\big(X_{t_\ell}\big) \bigg] \,
\dd t_1 \cdots \dd t_n.
\end{split}
\end{equation}
Combining \eqref{eq:psi0def} abd \eqref{eq:hatdeltaepsilon}--\eqref{eq:psi0epsilondef}, we can bound $|\hat\psi^\epsilon(x)| 
\le \bar{\psi}^\epsilon(x) := C_1 \, \hat\delta_0^\epsilon(x) + C_2 \, |x|^{-\gamma}$ for suitable constants $C_1, C_2$ 
and $\gamma \in \{\theta,2\theta\} < 2(1-\alpha)$. Since $\bar{\psi}^\epsilon$ is decreasing, and since
$\hat{\p}_0(X_t \in \cdot)$ is stochastically dominated by $\hat{\p}_x(X_t \in \cdot)$ for any $x \ge 0$,
we can bound
\begin{equation}
\label{eq:hatCdue}
\begin{split}
& \frac{1}{n!} \, \hat{\e}_0 \bigg[ \bigg( \int_0^T \hat\psi^\epsilon(X_t) \, \dd t \bigg)^n \bigg]
\le \int\limits_{0 < t_1<\cdots<t_n < T} 
\prod_{\ell=1}^n  \hat{\e}_0 \big[ \bar\psi^\epsilon\big(X_{t_\ell - t_{\ell-1}}\big) \big] \,
\dd t_1 \cdots \dd t_n \\
& \quad = \int\limits_{0 < t_1<\cdots<t_n < T} 
\prod_{\ell=1}^n 
\bigg\{ C_1 \, \frac{c_\alpha}{\epsilon^{2(1-\alpha)}}
\, \hat{\p}_0 (X_{t_\ell - t_{\ell-1}} < \epsilon )
+ C_2 \, \hat{\e}_0 \big[ |X_{t_\ell - t_{\ell-1}}|^{-\gamma} \big] \bigg\} \,
\dd t_1 \cdots \dd t_n \\
& \quad \le \int\limits_{0 < t_1<\cdots<t_n < T} 
\prod_{\ell=1}^n 
\bigg\{ \frac{C_1}{(t_\ell - t_{\ell-1})^{1-\alpha}}
+ \frac{C'_2}{(t_\ell - t_{\ell-1})^{\gamma/2}} \bigg\} \,
\dd t_1 \cdots \dd t_n,
\end{split}
\end{equation}
where the last inequality holds by \eqref{eq:gt}--\eqref{eq:PXepsilon}, for a suitable (and explicit) $C'_2 < \infty$
(note that $\hat{\e}_0[|X_{t_\ell - t_{\ell-1}}|^{-\gamma}] < \infty$ because $\gamma < 2(1-\alpha)$). The last 
expression is a finite constant, and so we have completed the proof of \eqref{eq:hatCexpr}. \qed


\subsection{Proof of \eqref{eq:negligible-tails}}
\label{sec:negligible-tails}

The second inequality in \eqref{eq:negligible-tails} follows from the bounds in \eqref{eq:hatCuno}--\eqref{eq:hatCdue}, namely, 
for some constant $C = C(T) < \infty$ (recall that $\gamma < 2(1-\alpha)$)
\begin{equation}
\label{eq:boChat}
\begin{split}
&\forall\, T \in (0,\infty) \quad \forall k \in \N\colon\\ 
&\hat C_{T,k} \le \int\limits_{0 < t_1<\cdots<t_k < T} 
\Bigg\{ \prod_{\ell=1}^k \frac{C}{(t_\ell - t_{\ell-1})^{1-\alpha}}
\Bigg\} \, \dd t_1 \cdots \dd t_k
= \frac{\big( C \, T^\alpha \, \Gamma(\alpha) \big)^k}{\Gamma(\alpha k)}
\end{split}
\end{equation}
(see also Appendix~\ref{app:finpartcont}). 

To prove the first inequality in \eqref{eq:negligible-tails}, we recall that, by \eqref{eq:CNkdef0},
\begin{equation} 
\label{eq:CNkdef0rep}
C_{TN,k} = \sum_{1 \leq n_1 < \cdots < n_k \leq TN} 
\e\left[\,\prod_{\ell=1}^k \chi_{\beta_N,h_N}(|S_{n_\ell}|)\right] \,, \qquad
\chi_{\beta_N,h_N}(x) = \ee^{\psi_{\beta_N,h_N}(x)}-1.
\end{equation}
In the sequel, $C, C' < \infty$ denote absolute constants, possibly depending on $T, \hat\beta,\hat h$,
that may change from line to line. Since $\beta_N, h_N \downarrow 0$, it follows from \eqref{eq:psi} that for
$N\to\infty$ (recall that $\phi$ is bounded):
\begin{equation} 
\label{eq:chibound}
\text{uniformly in } x\in\Z\colon \qquad
\chi_{\beta_N,h_N}(x) \sim
\psi_{\beta_N,h_N}(x) \sim
\tfrac12\beta_N^2 \, \phi(x)^2 \, - \, h_N \, \phi(x).
\end{equation}
Since $\lim_{|x|\to\infty} |x|^\theta\phi(x) = c \in (0,\infty)$ by \eqref{eq:phiscal}, we can bound
\begin{equation} 
\label{eq:ifoby}
\forall x \in \Z: \qquad
\big| \chi_{\beta_N,h_N}(|x|) \big| \le
\bar{\chi}_{\beta_N,h_N}(|x|) :=
C \, \big( \tfrac12\beta_N^2 \, (1+ |x|)^{-2\theta} \, + \, h_N \, (1+|x|)^{-\theta} \big).
\end{equation}

Next, recall the uniform upper bound in \eqref{eq:globalUB}. For any $\gamma \ne 2(1-\alpha)$ we can 
bound
\begin{equation}
\label{eq:teaches}
\begin{split}
\forall n \in \N: \qquad
\e\big[ (1+ |S_n|)^{-\gamma} \big] \le 
\frac{C}{n^{1-\alpha}}
\sum_{k\in\Z} (1+|k|)^{1-2\alpha-\gamma} \, \ee^{-\frac{|k|^2}{8n}}
\le \frac{C'}{n^{(1-\alpha) \wedge (\gamma/2)}}
\end{split}
\end{equation}
(where $a \wedge b := \min\{a,b\}$). Indeed,
\begin{itemize}
\item 
for $\gamma > 2(1-\alpha)$ we drop the exponential and note that $\sum_{k\in\Z} (1+|k|)^{1-2\alpha-\gamma} 
< \infty$;
\item 
for $\gamma < 2(1-\alpha)$ a Riemann sum approxmation shows that the sum can be bounded by a multiple 
of $n^{1-\alpha-\gamma/2}$ (note that $\int_0^\infty x^{1-2\alpha-\gamma} \, \ee^{-x^2/8} \, \dd x < \infty$).
\end{itemize}
It follows from \eqref{eq:ifoby}--\eqref{eq:teaches} that
\begin{equation}
\label{eq:ifoby2}
\forall\, n,N\in\N\colon \qquad
\e\big[\bar{\chi}_{\beta_N,h_N}(|S_n|) \big] 
\le C \, \bigg( \frac{\beta_N^2}{n^{(1-\alpha)\wedge \theta}}
\, + \, \frac{h_N}{n^{(1-\alpha)\wedge (\theta/2)}} \bigg).
\end{equation}
Setting $n = Nt$ with $t \in (0,T]$, and recalling that $\beta_N$, $h_N$ are given in \eqref{eq:betaN}--\eqref{eq:hN}, 
we obtain
\begin{equation} 
\label{eq:ebarchi}
\begin{split}
&\forall\, N\in\N, \ \forall t\, \in (0,T] \cap N^{-1}\N\colon \\
&\e\big[ \bar{\chi}_{\beta_N,h_N}(|S_{Nt}|)\big] 
\le \frac{C}{N} \, \bigg( \frac{1}{t^{(1-\alpha)\wedge \theta}}
+  \frac{1}{t^{(1-\alpha)\wedge (\theta/2)}} \bigg)
\le \frac{C'}{N} \, \frac{1}{t^{1-\alpha}}.
\end{split}
\end{equation}

Next, by \eqref{eq:CNkdef0rep} we can bound
\begin{equation} 
\label{eq:Crep}
\begin{split}
C_{TN,k} \le \sum_{1 \leq n_1 < \cdots < n_k \leq TN} 
\prod_{\ell=1}^k \e\left[\bar{\chi}_{\beta_N,h_N}(|S_{n_\ell-n_{\ell-1}}|)\right], 
\end{split}
\end{equation}
because the function $\bar{\chi}_{\beta_N,h_N}(|x|)$ is decreasing in $|x|$ and the law $\p(|S_n| \in 
\cdot \,|\, |S_{n'}| = m)$ stochastically dominates $\p(|S_n| \in \cdot \,|\, S_{n'} = 0)
= \p(|S_{n-n'}| \in \cdot)$ for all $m \geq 0$ (as can be seen via a coupling argument).
Then, setting $n_\ell = N t_\ell$, via \eqref{eq:ebarchi} we get
\begin{equation} 
\label{eq:Crep2}
\begin{split}
C_{TN,k} & \le \frac{1}{N^k} \,
\sum_{\substack{0 < t_1 < \cdots < t_k \leq T \\
\text{such that } N t_1, \ldots, N t_k \in \N}} 
\prod_{\ell=1}^k \frac{C}{(t_\ell-t_{\ell-1})^{1-\alpha}} \\
& \le \int\limits_{0 < t_1<\cdots<t_k < T} 
\Bigg\{ \prod_{\ell=1}^k \frac{C}{(t_\ell - t_{\ell-1})^{1-\alpha}}
\Bigg\} \, \dd t_1 \cdots \dd t_k.
\end{split}
\end{equation}
This is the same bound as in \eqref{eq:boChat}, and hence the first inequaltiy in \eqref{eq:negligible-tails} is 
proved.\qed


\subsection{Proof of \eqref{eq:convCk}}
\label{sec:convCk}

Intuitively, we can get \eqref{eq:convCk} by substituting the asymptotic relation \eqref{eq:chibound} 
into the definition \eqref{eq:CNkdef0rep} of $C_{TN,k}$ and performing a Riemann sum approximation,
recalling \eqref{eq:phiscal}. However, some care is required to properly implement this strategy.

\medskip\noindent
{\bf 1.} Recall from \eqref{eq:chibound} that $\chi_{\beta_N,h_N}(x) \sim \frac{\beta_N^2}{2} \, \phi(x)^2  
- h_N \, \phi(x)$ as $N\to\infty$, and by \eqref{eq:phiscal} the terms $\phi(x)^2$ and $\phi(x)$ decay 
polynomially as $|x|^{-\gamma}$ as $|x|\to\infty$, with $\gamma = 2\theta$ and $\gamma = \theta$, 
respectively. Fix $\epsilon > 0$ small and introduce an approximation $\chi^\epsilon_{\beta_N,h_N}(x)$ 
of $\chi_{\beta_N,h_N}(x)$ in which each term is restricted to a relevant range: either $|x| \approx 1$ 
(more precisely, $|x| \le \frac{1}{\epsilon}$) or $|x| \approx \sqrt{N}$ (more precisely, $\epsilon \sqrt{N} 
\le |x| \le \frac{1}{\epsilon} \sqrt{N}$), depending on the decay exponent $\gamma$. Indeed, the bound 
in \eqref{eq:teaches} on $\e\big[ (1+ |S_N|)^{-\gamma} \big] \le$ tells us that
\begin{itemize}
\item 
for $\gamma > 2(1-\alpha)$ the relevant contribution comes from $|x| = |S_N| \approx 1$;
\item 
for $\gamma < 2(1-\alpha)$ the relevant contribution comes from $|x| = |S_N| \approx \sqrt{N}$.
\end{itemize}
This motivates the following definition:
\begin{equation}
\label{eq:hatpsi}
\chi^\epsilon_{\beta_N,h_N}(x) :=
\begin{cases}
\Big\{\tfrac12\beta_N^2 \, \phi(x)^2 \, - \, h_N \, \phi(x) \Big\}
\ind_{\{\epsilon \sqrt{N} \le |x| \le \frac{1}{\epsilon} \sqrt{N}\}},
&\theta \in (0,1-\alpha), \\
\rule{0pt}{1.7em}\tfrac12\beta_N^2 \, \phi(x)^2 
\ind_{\{|x| \le \frac{1}{\epsilon}\}} - h_N \, \phi(x) \, \ind_{\{\epsilon \sqrt{N} \le |x| \le \frac{1}{\epsilon} \sqrt{N}\}},
&\theta \in (1-\alpha, 2(1-\alpha)), \\
\rule{0pt}{1.7em}\Big\{\tfrac12\beta_N^2 \, \phi(x)^2 
- h_N \, \phi(x) \Big\} \ind_{\{|x| \le \frac{1}{\epsilon}\}},
&\theta \in (2(1-\alpha),\infty). 
\end{cases}
\end{equation}
We correspondingly define an approximation $C^\epsilon_{TN,k}$ of $C_{TN,k}$ (see \eqref{eq:CNkdef0rep}):
\begin{equation} 
\label{eq:CNkappr}
C^\epsilon_{TN,k} = \sum_{1 \leq n_1 < \cdots < n_k \leq TN} 
\e\left[\,\prod_{\ell=1}^k \chi^\epsilon_{\beta_N,h_N}(|S_{n_\ell}|)\right].
\end{equation}
This allows us to split the proof of \eqref{eq:convCk} in two parts: For \emph{fixed} $k\in\N$,
\begin{equation}
\label{eq:splittwo}
\lim_{\epsilon \downarrow 0} \ \limsup_{N\to\infty} \
\big| C_{TN,k} - C^\epsilon_{TN,k} \big| = 0,
\qquad
\lim_{\epsilon \downarrow 0} \ \lim_{N\to\infty} \ C^\epsilon_{TN,k} = \hat{C}_{T,k}.
\end{equation}

\medskip\noindent
{\bf 2.} The first claim in \eqref{eq:splittwo} is a consequence of the bounds that we have derived in 
Section~\ref{sec:negligible-tails} for the proof of \eqref{eq:negligible-tails}. Indeed, we can use a 
telescopic argument based on the bound
\begin{equation}
\bigg|\prod_{\ell=1}^k a_\ell - \prod_{\ell=1}^k b_\ell\bigg| \,\le\, \sum_{i=1}^k \ 
\bigg( \prod_{\ell=1}^{i-1} |a_\ell| \bigg)  |a_i - b_i|  \bigg( \prod_{\ell=i+1}^k |b_\ell| \bigg) 
\end{equation}
to replace $a_\ell := \chi_{\beta_N,h_N}(S_{n_\ell})$ by $b_\ell := \chi^\epsilon_{\beta_N,h_N}(S_{n_\ell})$ 
in the definition \eqref{eq:CNkdef0rep} of $C_{TN,k}$. Note that both $|a_\ell|$ and $|b_\ell|$ are bounded by 
$\bar{\chi}_{\beta_N,h_N}(|S_{n_\ell}|)$ defined in \eqref{eq:ifoby}. Similarly, we can bound $|a_i - b_i|
= |(\chi_{\beta_N,h_N} - \chi^\epsilon_{\beta_N,h_N})(S_{n_i})| \le \Delta\bar{\chi}^\epsilon_{\beta_N,h_N}
(|S_{n_i}|)$, where we define $\Delta\bar{\chi}^\epsilon_{\beta_N,h_N}(|x|)$ to be $\bar{\chi}_{\beta_N,h_N}(|x|)$ 
in \eqref{eq:ifoby} with the terms $(1+ |x|)^{-2\theta}$ and $(1+ |x|)^{-\theta}$ replaced as follows:
\begin{equation}
\label{eq:replacement}
(1+|x|)^{-\gamma} \ \rightsquigarrow \
\begin{cases}
(1+|x|)^{-\gamma} \, \ind_{\{|x| > \frac{1}{\epsilon}\}}, 
&\gamma > 2(1-\alpha) \\
(1+|x|)^{-\gamma} \, \ind_{\{|x| < \epsilon \sqrt{N}\}
\cup \{|x| > \frac{1}{\epsilon} \sqrt{N}\}}, 
&\gamma < 2(1-\alpha). 
\end{cases}
\end{equation}
This leads to
\begin{equation} 
\label{eq:thisleadsto}
\big| C_{TN,k} - C^\epsilon_{TN,k} \big| \le\sum_{i=1}^k \, 
\sum_{1 \leq n_1 < \cdots < n_k \leq TN} 
\e\left[\, \Delta\bar\chi^\epsilon_{\beta_N,h_N}(|S_{n_i}|)
\times \prod_{\ell \in \{1,\ldots,k\} \setminus \{i\}} 
\bar\chi_{\beta_N,h_N}(|S_{n_\ell}|)\right].
\end{equation}

\medskip\noindent
{\bf 3.} Recall the bound \eqref{eq:ifoby2} for $\e[\bar\chi_{\beta_N,h_N}(|S_{n}|)]$. A similar but improved 
bound holds for $\e\big[\Delta\bar{\chi}^\epsilon_{\beta_N,h_N}(|x|)\big]$, namely,
\begin{equation}
\label{eq:ifoby3}
\begin{split}
\forall\, n,N\in\N\colon \qquad
& \e\big[\Delta\bar{\chi}^\epsilon_{\beta_N,h_N}(|S_n|) \big] \le
C \, \eta_\epsilon \, \bigg( \frac{\beta_N^2}{n^{(1-\alpha)\wedge \theta}}
\, + \, \frac{h_N}{n^{(1-\alpha)\wedge (\theta/2)}} \bigg)\\
& \text{with} \quad \lim_{\epsilon\downarrow 0} \eta_\epsilon = 0.
\end{split}
\end{equation}
Indeed, by \eqref{eq:teaches} and the lines following it, we can choose
\begin{equation}
\eta_\epsilon := \begin{cases}
\sum_{k > 1/\epsilon}  (1+|k|)^{1-2\alpha-\gamma}, 
&\gamma > 2(1-\alpha) \\
\rule{0pt}{1.5em}\int_{(0,\epsilon) \cup (1/\epsilon,\infty)} 
x^{1-2\alpha-\gamma} \, e^{-x^2/8} \, \dd x, 
&\gamma < 2(1-\alpha). 
\end{cases}
\end{equation}
Consequently, arguing as in \eqref{eq:ebarchi}--\eqref{eq:Crep}, we get the following improvement of 
\eqref{eq:Crep2}:
\begin{equation} 
\label{eq:Crep3}
\begin{split}
\big| C_{TN,k} - C^\epsilon_{TN,k} \big| & \le
k \, \eta_\epsilon \, \int\limits_{0 < t_1<\cdots<t_k < T} 
\Bigg\{ \prod_{\ell=1}^k \frac{C}{(t_\ell - t_{\ell-1})^{1-\alpha}}
\Bigg\} \, \dd t_1 \cdots \dd t_k 
\end{split}
\end{equation}
(set $n_0 = 0$, $x_0 = 0$), from which the first equation in \eqref{eq:splittwo} readily follows.
With similar arguments we can show that in \eqref{eq:CNkappr} we can further restrict the sum to
$n_\ell - n_{\ell - 1} \ge \epsilon N$ for all $\ell=1,\ldots, k$, with a negligible error as $N\to\infty$ 
followed by $\epsilon \downarrow 0$ (we omit the details). We can thus update the definition 
\eqref{eq:CNkappr} of $C^\epsilon_{TN,k}$, where we also sum over space variables:
\begin{equation} 
\label{eq:CNkappr2}
C^\epsilon_{TN,k} = \sum_{\substack{1 \leq n_1 < \cdots < n_k \leq TN: \\[0.1cm]
n_\ell - n_{\ell - 1} \ge \epsilon N \ \forall\, \ell=1,\ldots, k\\[0.1cm]
x_1, \ldots, x_k \in \N_0}} \
\prod_{\ell=1}^k 
\p_{x_{\ell-1}}(|S_{n_\ell-n_{\ell-1}}| = x_\ell)
\, \chi^\epsilon_{\beta_N,h_N}(x_\ell).
\end{equation}

\medskip\noindent
{\bf 4.} We are left with proving the second claim in \eqref{eq:splittwo}. To do so, we distinguish between the three
regimes in \eqref{eq:hatpsi}.

\medskip\noindent
\paragraph{$\bullet$ First regime.}
In the regime $\theta \in (0,1-\alpha)$, the function $\chi^\epsilon_{\beta_N,h_N}(x_\ell)$ restricts all 
variables $x_\ell$ in the diffusive range $\{\epsilon \sqrt{N} \le x \le \frac{1}{\epsilon} \sqrt{N}\}$ 
(``intermediate heights''). This allows us to apply the local limit theorem in \eqref{eq:llt}, which we may rewrite 
as follows: Uniformly in $t \in [\epsilon, T]$ and $\epsilon \le z', z \le 1/\epsilon$,
\begin{equation} 
\label{eq:lltbis}
\p_{\sqrt{N} z'}(|S_{Nt}| = \sqrt{N}z) 
\sim \frac{2}{\sqrt{N}} \, g_t(z', z)\,
\ind_{\{\text{$\sqrt{N}z-\sqrt{N}z'-Nt$ is even}\}}, 
\qquad N \to \infty,
\end{equation}
where we recall that $g_t(z',z) = g_1(z'/\sqrt{t}, z/\sqrt{t})/\sqrt{t}$. (For notational simplicity, we will ignore the 
parity constraint and remove the compensating factor $2$.) Note that $\phi(x) \sim c /  x^\theta$ as $x \to \infty$ 
(see \eqref{eq:phiscal}). Therefore, by \eqref{eq:betaN}--\eqref{eq:psi0def} and \eqref{eq:hatpsi}, uniformly in 
$\epsilon \le z \le 1/\epsilon$,
\begin{equation}
\begin{split}
\chi^\epsilon_{\beta_N,h_N}(\sqrt{N} z) 
&\sim \big\{\tfrac12 \beta_N^2 \, c^2 \, (\sqrt{N} z)^{-2\theta} 
- h_N \, c \, (\sqrt{N} z)^{-\theta} \big\} 
\sim \frac{\hat\psi_{\hat\beta,\hat h}(z)}{N}.
\end{split}
\end{equation}
Introducing macroscopic space-time variables $n_\ell = N t_\ell$, $x_\ell = \sqrt{N} z_\ell$, we get from 
\eqref{eq:CNkappr2} that
\begin{equation}
\begin{split}
C^\epsilon_{TN,k} 
&\sim \sum_{\substack{0 < t_1 < \cdots < t_k \leq T:\\[0.1cm]
N t_\ell \in \N \text{ and } t_\ell-t_{\ell-1} \ge \epsilon \ \forall\, \ell \\[0.1cm]
z_1, \ldots, z_k \in [0,\infty): \\[0.1cm]
\sqrt{N}z_\ell \in \Z
\text{ and } \epsilon \le z_\ell \le 1/\epsilon \ \forall\, \ell}} \
\prod_{\ell=1}^k \frac{g_{t_\ell-t_{\ell-1}}(z_{\ell-1}, z_\ell)}{\sqrt{N}}
\, \frac{\hat\psi_{\hat\beta,\hat h}(z_\ell)}{N} \\
& \xrightarrow[\, N\to\infty \, ]{} \int\limits_{\substack{0 < t_1 < \cdots < t_k \leq T:\\[0.1cm]
t_\ell-t_{\ell-1} \ge \epsilon \ \forall\, \ell \\[0.1cm]
z_1, \ldots, z_k \in [0,\infty)\colon \\[0.1cm]
\epsilon \le z_\ell \le 1/\epsilon \ \forall\, \ell}} 
\prod_{\ell=1}^k 
\big\{ g_{t_\ell-t_{\ell-1}}(z_{\ell-1}, z_\ell) \, \hat\psi_{\hat\beta,\hat h}(z_\ell) \big\} 
\, \dd t_1 \, \ldots \, \dd t_k \, \dd z_1 \, \ldots
\, \dd z_k.
\end{split}
\end{equation}
When we let $\epsilon \downarrow 0$, the last integral converges to $\hat C_{TN,k}$ (see \eqref{eq:hatCexpr}). 
This proves the second claim in \eqref{eq:splittwo}.

\medskip\noindent
\paragraph{$\bullet$ Third regime.}
In the regime $\theta \in (2(1-\alpha),\infty)$, the function $\chi^\epsilon_{\beta_N,h_N}(x_\ell)$
restricts all variables $x_\ell$ to the $O(1)$ range $\{x \le \frac{1}{\epsilon} \}$ (``low heights'').
This allows us to apply the local limit theorem in \eqref{eq:Besseltail}, which we may rewrite as follows:
Uniformly in $t \in [\epsilon, T]$ and $0 \le x', x \le 1/\epsilon$,
\begin{equation}
\label{eq:Besseltailbis}
\p_{x'}(|S_{Nt}| = x) \sim  \frac{2\,c(x)}{N^{1-\alpha} \, t^{1-\alpha}}\,
\ind_{\{\text{$x-x'-Nt$ is even}\}}, \qquad N \to \infty,
\end{equation}
where we recall that $\hat g_1(x'/\sqrt{n},0) \sim \hat g_1(0,0) = 1$ (see \eqref{eq:hatgt}).
(We again ignore the parity constraint and remove the compensating factor $2$.) Moreover, it follows 
from \eqref{eq:betaN}--\eqref{eq:hN} and \eqref{eq:hatpsi} that
uniformly for $x \le 1/\epsilon$,
\begin{equation}
\begin{split}
\chi^\epsilon_{\beta_N,h_N}(x) 
&= \bigg\{\tfrac12 \beta_N^2 \, \phi(x)^2
- h_N \, \phi(x) \bigg\} 
\sim \frac{\tfrac12\hat\beta^2\, \phi(x)^2
- \hat h \, \phi(x)}{N^\alpha}.
\end{split}
\end{equation}
Introducing the macroscopic time variables $n_\ell = N t_\ell$, while keeping the microscopic space
variables $x_\ell$, we get by \eqref{eq:CNkappr2} that
\begin{equation}
\begin{aligned}
&C^\epsilon_{TN,k}\\ 
&\sim \sum_{\substack{0 < t_1 < \cdots < t_k \leq T\colon\\[0.1cm]
N t_\ell \in \N \text{ and } t_\ell-t_{\ell-1} \ge \epsilon \ \forall\, \ell \\[0.1cm]
x_1, \ldots, x_k \in \N_0\colon \\[0.1cm]
x_\ell \le 1/\epsilon \ \forall\, \ell}} \
\prod_{\ell=1}^k 
\frac{c(x_\ell)}{N^{1-\alpha} \, (t_\ell-t_{\ell-1})^{1-\alpha}}
\, \frac{\tfrac12\hat\beta^2 \, \phi(x_\ell)^2
- \hat h \, \phi(x_\ell)}{N^\alpha} \\
& \xrightarrow[\, N\to\infty \, ]{} \int\limits_{\substack{0 < t_1 < \cdots < t_k \leq T\colon\\[0.1cm]
t_\ell-t_{\ell-1} \ge \epsilon \ \forall\, \ell }} 
\prod_{\ell=1}^k 
\frac{\big\{\tfrac12\hat\beta^2 \, \sum_{x \le 1/\epsilon} c(x) \phi(x)^2
- \hat h \, \sum_{x \le 1/\epsilon} c(x) \phi(x)\big\}}{(t_\ell-t_{\ell-1})^{1-\alpha}}
\, \dd t_1 \, \ldots \, \dd t_k.
\end{aligned}
\end{equation}
When we let $\epsilon \downarrow 0$, the term in brackets converges to
\begin{equation}
\tfrac12\hat\beta^2 \, \sum_{x \in\N_0} c(x) \phi(x)^2
- \hat h \, \sum_{x \in\N_0} c(x) \phi(x) =: 
\tfrac12\hat\beta^2\, c^*[\phi] - \hat h \, c^*[\phi].
\end{equation}
Hence $C^\epsilon_{TN,k} $ converges to $\hat C_{TN,k}$ (see \eqref{eq:hatCalte}). This proves 
the second claim in \eqref{eq:splittwo}.

\medskip\noindent
\paragraph{$\bullet$ Second regime.}
In the regime $\theta \in (1-\alpha, 2(1-\alpha))$, the function $\chi^\epsilon_{\beta_N,h_N}(x_\ell)$ 
is the sum of two terms with different restrictions, namely, $\{x \le \frac{1}{\epsilon} \}$ and 
$\{\epsilon \sqrt{N} \le x \le \frac{1}{\epsilon} \sqrt{N}\}$. Expanding the product over $\chi^\epsilon_{\beta_N,h_N}(
x_\ell)$ in \eqref{eq:CNkappr2}, we obtain $2^k$ different terms, where every variable $x_\ell$ is subject to 
one of the two restrictions. Each of these terms can be analysed by arguing precisely as in the previous
regimes: 
\begin{itemize}
\item[-] 
for $\{\epsilon \sqrt{N} \le x_\ell \le \frac{1}{\epsilon} \sqrt{N}\}$ we apply the local limit theorem 
in \eqref{eq:lltbis}, introducing the macroscopic space variable $z_\ell \in [0,\infty)$ defined by $x_\ell 
= \sqrt{N} z_\ell$;
\item[-] 
for $\{x_\ell \le \frac{1}{\epsilon} \}$ we apply the local limit theorem in \eqref{eq:Besseltailbis}, keeping the 
microscopic variable $x_\ell \in \N_0$.
\end{itemize}
The choice of $\beta_N$, $h_N$ in \eqref{eq:betaN}--\eqref{eq:hN} ensure that the appropriate Riemann factor 
appears, so that the sum defining $C^\epsilon_{NT,k}$ converges to the integral in \eqref{eq:hatCexpr} as 
$N\to\infty$ followed by $\epsilon \downarrow 0$. This integral defines $\hat C_{NT,k}$, with the recipe 
in \eqref{eq:recipe}. We omit the details, which are notationally tedious because of the need to specify 
the restriction for each variable $x_\ell$, but are ultimately straightforward.


\appendix


\section{Equivalence}
\label{appA}

In Appendices~\ref{app:exceed}--\ref{app:compareann} we prove \eqref{eq:exceed}, 
\eqref{eq:compare} and \eqref{eq:compareann}, respectively. In Appendix~\ref{app:interchange} 
we show that the limits $N\to\infty$ and $T\to\infty$ in \eqref{eq:interchange} may be interchanged. 


\subsection{Proof of \eqref{eq:exceed}} 
\label{app:exceed}

For $N\in\N_0$ and $x \in \N_0$, define
\begin{equation}
f_x(N) := \p_x\big(S_n \geq x\,\,\forall\,0 < n < N, S_N=x\big).
\end{equation}
(If $S$ has period 2, then replace $N$ by $2N$.) By superadditivity,
\begin{equation}
\lim_{N\to\infty} \frac{1}{N} \log f_x(N) =: C_x \in (-\infty,0] \quad \text{exists}.
\end{equation}
For $z \in [0,\infty)$ and $x \in \N_0$, define
\begin{equation}
F_x(z) := \sum_{N \in \N_0} z^N f_x(N). 
\end{equation}
Then, clearly,
\begin{equation}
C_x = 0 \quad \Longleftrightarrow \quad F_x(1+\epsilon) = \infty\,\,\forall\,\epsilon>0. 
\end{equation}
By \eqref{eq:astau}, we have $C_0=0$. Below we show that this implies $C_x=0\,\,\forall\, 
x\in\N_0$. The proof is by induction on $x$.

Any path that starts at $x$, ends at $x$ and does not go below $x$ can be cut into pieces 
that zig-zag between $x$ and $x+1$ and pieces that start at $x+1$, end at $x+1$ and do 
not go below $x+1$. Hence we have
\begin{equation}
F_x(z) = \frac{r_xzQ_x(z)}{1-F_{x+1}(z)Q_x(z)}, \qquad Q_x(z) := \frac{1}{1-zr_x},
\quad  r_x := p_{x,x+1}p_{x+1,x}>0,
\end{equation}
where $p_{x,x+1}:=\p(S_1=x+1 \mid S_0=x)$. We know that $F_0(1+\epsilon) = \infty\,\,\forall\,
\epsilon>0$. We show that this implies $F_x(1+\epsilon) = \infty\,\,\forall\,\epsilon>0$ for every 
$x\in\N$. The proof is by induction on $x$.

Fix $x\in\N_0$ and suppose that $F_x(1+\epsilon) = \infty\,\,\forall\,\epsilon>0$. We argue by 
contradiction. Suppose that $F_{x+1}(1+\epsilon)<\infty$ for $\epsilon$ small enough. Because 
$r_x>0$ and $Q_x(1+\epsilon)<\infty$ for $\epsilon$ small enough, it follows that $F_{x+1}(1+\epsilon)
Q_x(1+\epsilon) \geq 1\,\,\forall\,\epsilon>0$, and by continuity that $F_{x+1}(1)Q_x(1) \geq 1$. 
To get the contradiction it therefore suffices to show that $F_{x+1}(1)Q_x(1)<1$. Now, because 
$(S_n)_{n\in\N_0}$ is recurrent, we have $F_{x+1}(1) = p_{x+1,x+2}$. Because $Q_x(1) = 
1/(1-r_x)$, it follows that
\begin{equation}
F_{x+1}(1)Q_x(1) = \frac{p_{x+1,x+2}}{1-r_x} = \frac{p_{x+1,x+2}}{1-p_{x+1,x}p_{x,x+1}}
= \p_{x+1}(S_n \geq x\,\,\forall\,n \in \N_0) < 1,
\end{equation}
where for both the last equality and the inequality we again use recurrence.  

To prove \eqref{eq:exceed}, note that for any $0 \leq M \leq N$,
\begin{equation}
\begin{aligned}
\p_0(S_n \geq M\,\,\forall\,M < n \leq N) 
&= \p_0(S_n=n\,\,\forall\,0 < n \leq M)\,\p_M(S_n \geq M\,\,\forall\,M < n \leq N)\\ 
&\geq \left[\prod_{x=0}^{M-1} p_{x,x+1}\right] \,f_M(N-M).
\end{aligned}
\end{equation} 
Hence
\begin{equation}
\liminf_{N\to\infty} \frac{1}{N} \log \p_0(S_n \geq M\,\,\forall\,M < n \leq N) \geq C_M 
\qquad \forall\,M\in\N_0.
\end{equation}
Since $C_M = 0$ for all $M\in\N_0$, this implies \eqref{eq:exceed}. 


\subsection{Proof of \eqref{eq:compare}}
\label{app:compare}

Note that $Z_{N,\beta,h}^{\omega,\rc} \leq Z_{N,\beta,h}^\omega$ for all $N\in\N_0$. Hence we 
only need to show that $Z_{N,\beta,h}^{\omega} \leq \ee^{o(N)}\,Z_{N,\beta,h}^{\omega,\rc}$ 
$\bbP$-a.s. Define the constrained partition function
\begin{equation}
\label{eq:Znxy}
Z_{N,\beta,h}^{\omega,x,y}  
:= \e_x\left[\ee^{\sum_{n=1}^N (\beta\omega_n-h)\phi(S_n)}\,1_{\{S_n = y\}}\right],
\qquad N \in \N_0,\,x,y \in \Z. 
\end{equation}
Then
\begin{equation}
Z_{N,\beta,h}^\omega = \sum_{y \in \Z} Z_{N,\beta,h}^{\omega,0,y},
\qquad Z_{N,\beta,h}^{\omega,\rc} = Z_{N,\beta,h}^{\omega,0,0}.
\end{equation}
By \eqref{eq:phidecay}, for every $\epsilon>0$ there is an $M \in\N_0$ such that 
\begin{equation}
\label{eq:phiapprox}
|\phi(x)-\gamma^+| \leq \epsilon, \quad x \geq M, 
\qquad 
|\phi(x)-\gamma^-| \leq \epsilon, \quad x \leq -M.
\end{equation} 
where we abbreviate $\gamma^\pm = \lim_{x\to\pm\infty} \phi(x)$ and recall that $\gamma^+=0$,
$\gamma^- \in [0,\infty)$ according to \eqref{eq:phidecay}. For $N \geq M$, split
\begin{equation}
Z_{N,\beta,h}^\omega = \mathrm{I} + \mathrm{II} + \mathrm{III}
\end{equation}
with
\begin{equation}
\mathrm{I} = \sum_{y \geq 2M} Z_{N,\beta,h}^{\omega,0,y}, \qquad
\mathrm{II} = \sum_{y \leq -2M} Z_{N,\beta,h}^{\omega,0,y}, \qquad
\mathrm{III} = \sum_{-2M<y<2M} Z_{N,\beta,h}^{\omega,0,y}.
\end{equation}  
We will show that all three terms are bounded by $\ee^{o(N)}Z_{N,\beta,h}^{\omega,0,0}$.
For ease of notation we will pretend that the Markov chain has period $1$. This is easily
fixed when the period is $2$. 

Consider $\mathrm{I}$. Let $\sigma_N = \max\{0 \leq n \leq N\colon\,S_n = M\}$, and split
\begin{equation}
\mathrm{I} = \sum_{M \leq m \leq N-M} \e_0\left[\ee^{\sum_{n=1}^N (\beta\omega_n-h)
\phi(S_n)}\,1_{\{S_N \geq 2M\}}\,1_{\{\sigma_N=m\}}\right].
\end{equation}
Replace $(S_m,\ldots,S_N)$ by an $(N-M-m)$-step path from $M$ to $M$ that is 
everywhere $\geq M$, followed by an $M$-step downward path $(M,\ldots,0)$. 
The cost of this replacement is at most 
\begin{equation}
\exp\Big(\big[2\epsilon (N-M-m) + 2M\|\phi\|_\infty\big]\,(\beta\,\Omega_N(\omega)+|h|)\Big)
\end{equation} 
for the weight factor, with $\Omega_N(\omega) = \max_{1 \leq n \leq N} \omega_n$, 
and at most
\begin{equation}
\frac{\p_M(S_n \geq M\,\,\forall\, 0<n<N-M, S_{N} \geq 2M)}
{\p_M(S_n \geq M\,\,\forall\, 0<n<N-M-m, S_{N-M-m}=M)\,\Pi_M}
\end{equation}  
for the path probability, with $\Pi_M := \prod_{x=M}^1 p_{x,x-1}$. The probability in the denominator 
equals $f_M(N-M-m)$, and so we get
\begin{equation}
\mathrm{I} \leq \ee^{[2\epsilon N + 2M\|\phi\|_\infty]\,(\beta\,\Omega_N(\omega)+|h|)}
\frac{\Pi_M^{-1}}{f_M(N-M-m)}\,Z_{N,\beta,h}^{\omega,0,0}.
\end{equation}
Since $M$ is fixed, $\Pi_M>0$, $\|\phi\|_\infty<\infty$, $\Omega_N = O(\log N)$ $\bbP$-a.s.\ 
and $\max_{1 \leq n \leq N} 1/f_M(n) = \ee^{o(N)}$, we get $\mathrm{I} = \ee^{o(N)}
Z_{N,\beta,h}^{\omega,0,0}$ $\bbP$-a.s.

The argument for $\mathrm{II}$ is similar. For $\mathrm{III}$ we replace $(S_{N-M},\ldots,S_N)$ 
by an $M$-step path from $S_{N-M}$ to $0$ at a finite cost (depending on $M$ and $\Omega_N$).   

\subsection{Proof of \eqref{eq:compareann}}
\label{app:compareann}
 
Copy the argument in Appendix~\ref{app:compare} starting from the analogue of \eqref{eq:Znxy}:
\begin{equation}
Z_{N,\beta,h}^{\ann,x,y}  
:= \e_x\left[\ee^{\sum_{n=1}^N \psi_{\beta,h}(S_n)}\,1_{\{S_n = y\}}\right],
\qquad N \in \N_0,\,x,y \in \Z. 
\end{equation}
Use that $\|\psi_{\beta,h}\|_\infty < \infty$, and that for every $\epsilon>0$ there is an $M \in\N_0$ 
such that
\begin{equation}
\label{eq:psiapprox}
|\psi_{\beta,h}(x)-\eta^+| \leq \epsilon, \quad x \geq M,
\qquad 
|\psi_{\beta,h}(x)-\eta^-| \leq \epsilon, \quad x \leq -M,
\end{equation} 
with $\eta^\pm = \psi_{\beta,h}(\gamma^\pm) = \log M(\beta\gamma^\pm)-h\gamma^\pm$ (recall 
\eqref{eq:psi}).


\subsection{Interchange of limits}
\label{app:interchange}

The following two lemmas, which give us a sandwich for the annealed free energies in the discrete 
and in the continuous model, are the key to showing that the limits in \eqref{eq:interchange} may 
be interchanged.

\begin{lemma}
\label{lem:Fannalt}
For $M,N\in\N_0$, $\beta \in (0,\infty)$ and $h \in \R$, define
\begin{equation}
\label{eq:Zann+-def}
\begin{aligned}
Z^{\ann,-}_{M,N,\beta,h} &:= \min_{|x| \leq M} \sum_{|y| \leq M}
Z^{\ann,x,y}_{N,\beta,h},\\
Z^{\ann,+}_{N,\beta,h} &:= \sup_{x\in\Z} \sum_{y\in\Z} 
\max_{0 \leq N' \leq N} Z^{\ann,x,y}_{N',\beta,h}.
\end{aligned}
\end{equation}
Assume that $\lim_{|x| \to \infty} \phi(x) = 0$ (recall \eqref{eq:phiscal}). Then, for all $\beta \in 
(0,\infty)$ and $h \in \R$,
\begin{equation}
\label{eq:Fannsand}
\lim_{N\to\infty} \frac{1}{N} \log Z^{\ann,-}_{M,N,\beta,h}
= F^\ann(\beta,h) = \lim_{N\to\infty} \frac{1}{N} \log Z^{\ann,+}_{N,\beta,h}
\qquad \forall\,M \in \N_0.
\end{equation}
\end{lemma}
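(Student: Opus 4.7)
The plan is to sandwich each of $Z^{\ann,-}_{M,N,\beta,h}$ and $Z^{\ann,+}_{N,\beta,h}$ between the free partition function $Z^{\ann}_{N,\beta,h}$ and the constrained partition function $Z^{\ann,\rc}_{N+O(1),\beta,h}$, up to multiplicative errors of order $\ee^{o(N)}$, and then invoke \eqref{eq:compareann} together with the existence of $F^\ann(\beta,h)$ to identify the limits.

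For the first equality in \eqref{eq:Fannsand}, the upper bound $Z^{\ann,-}_{M,N,\beta,h} \le Z^{\ann}_{N,\beta,h}$ is immediate by choosing $x=0$ in the infimum and enlarging the sum over $y$ to all of $\Z$. For the matching lower bound I would use that $0 \in [-M,M]$ to prepend to any constrained $0\to 0$ walk a deterministic nearest-neighbour path from $x$ to $0$ of length $|x|\le M$. Since $\|\psi_{\beta,h}\|_\infty<\infty$ and the one-step transition probabilities along this short path are bounded below, the Markov property yields
\begin{equation*}
Z^{\ann,-}_{M,N,\beta,h} \,\ge\, \min_{|x|\le M}\, Z^{\ann,x,0}_{N,\beta,h} \,\ge\, c_M\, Z^{\ann,\rc}_{N-M,\beta,h}
\end{equation*}
for some $c_M>0$, and \eqref{eq:compareann} then closes the first equality.

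For the second equality, the trivial lower bound $Z^{\ann,+}_{N,\beta,h} \ge Z^{\ann}_{N,\beta,h}$ comes from choosing $x=0$ and $N'=N$ in the supremum. The matching upper bound is the main obstacle. I would first use $\max_{N'\le N} a_{N'} \le \sum_{N'=0}^N a_{N'}$ inside the sum over $y$ in the definition of $Z^{\ann,+}_{N,\beta,h}$, paying at most a polynomial prefactor, which reduces the task to showing
\begin{equation*}
\sup_{x\in\Z}\, Z^{\ann,x}_{N,\beta,h} \,\le\, \ee^{N F^\ann(\beta,h) + o(N)}, \qquad Z^{\ann,x}_{N,\beta,h} := \sum_{y\in\Z} Z^{\ann,x,y}_{N,\beta,h}.
\end{equation*}
Here the assumption $\phi(x)\to 0$ as $|x|\to\infty$ is essential: it forces $\psi_{\beta,h}(x)\to 0$, so for every $\epsilon>0$ there exists $M_\epsilon$ with $|\psi_{\beta,h}|\le\epsilon$ outside $[-M_\epsilon,M_\epsilon]$. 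I would then decompose $\e_x$ according to the first hitting time $T_\epsilon$ of $[-M_\epsilon,M_\epsilon]$: on $\{T_\epsilon > N\}$ the exponential weight is at most $\ee^{N\epsilon}$, while on $\{T_\epsilon = s,\, S_{T_\epsilon}=y\}$ the strong Markov property at time $s$ produces the factor $Z^{\ann,y}_{N-s,\beta,h}$ with $|y|\le M_\epsilon$. Arguing as in Appendix~\ref{app:compareann} (prepending a deterministic path from $0$ to $y$ of length $|y|\le M_\epsilon$) then yields $\sup_{|y|\le M_\epsilon} Z^{\ann,y}_{N'} \le C_{M_\epsilon}\,Z^{\ann}_{N'+M_\epsilon}$ uniformly in $N'\le N$. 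Summing over $s$ and using the elementary bound $Z^{\ann}_k \le \ee^{k(F^\ann+o(1))}$ produces the above display with $F^\ann$ replaced by $F^\ann+\epsilon$, and letting $\epsilon\downarrow 0$ completes the argument.

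The hard part will be controlling the dependence on $x$ in the uniform bound: without the symmetric decay of $\phi$, a starting point $x$ far from the origin could conceivably generate a partition function strictly larger than $Z^{\ann}_N$, and it is precisely the decay $\psi_{\beta,h}(x)\to 0$ at $\pm\infty$ that rules this out and allows all contributions to be funnelled back into $F^\ann(\beta,h)$ via the constrained partition function.
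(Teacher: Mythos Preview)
Your proposal is correct and follows essentially the same strategy as the paper: exploit the decay $\psi_{\beta,h}(x)\to 0$ at $\pm\infty$ to reduce, via the first-hitting-time decomposition of $[-M_\epsilon,M_\epsilon]$, the $\sup_x$ to a finite $\max_{|y|\le M_\epsilon}$, and then use short deterministic paths to compare starting/ending points within that window. The paper organizes the argument slightly differently---it proves directly $Z^{\ann,+}_{N}\le \ee^{o(N)}Z^{\ann,-}_{M,N}$ in one stroke rather than comparing each side to $Z^{\ann}_N$ separately, and it handles the $\max_{0\le N'\le N}$ via superadditivity of $N\mapsto\log Z^{\ann,0,0}_{N}$ rather than your $\max\le\sum$ trick---but these are cosmetic differences and your route works equally well.
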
 

\begin{proof}
Note that $Z^{\ann,-}_{M,N,\beta,h} \leq Z^{\ann,+}_{N,\beta,h}$ for all $M,N \in \N_0$. Hence 
we only need to show that $Z^{\ann,+}_{M,N,\beta,h} \leq \ee^{o(N)} Z^{\ann,-}_{N,\beta,h}$ 
as $N\to\infty$ for all $M\in\N_0$. Indeed, this will imply \eqref{eq:Fannsand} because $F^\ann
(\beta,h) = \lim_{N\to\infty} \frac{1}{N} \log Z^{\ann,0,0}_{N,\beta,h}$, as proved in 
Appendix~\ref{app:compareann}. In what follows we fix $\beta,h$ and abbreviate 
$\cH_N(S):=\sum_{n=1}^N \psi_{\beta,h}(S_n)$. 

Recall \eqref{eq:psiapprox}, where $\eta^\pm = 0$ because $\gamma^\pm = \lim_{x \to 
\pm\infty} \phi(x) = 0$ by assumption. Define $\tau_M:= \min\{n\in\N_0\colon\,|S_n| \leq M\}$ 
and split
\begin{equation}
\label{eq:xsplit}
\e_x\left[\ee^{\cH_N(S)}\right] = \mathrm{I}_x + \mathrm{II}_x
\end{equation}
with
\begin{equation}
\mathrm{I}_x = \e_x\left[\ee^{\cH_N(S)} 1_{\{\tau_M > N\}}\right],
\qquad 
\mathrm{II}_x = \e_x\left[\ee^{\cH_N(S)} 1_{\{\tau_M \leq N\}}\right].
\end{equation}
For $x \geq M$ and $x \leq -M$,
\begin{equation}
\label{eq:Ixbds}
\begin{aligned}
\mathrm{I}_x &\leq \ee^{\epsilon N} \p_x(\tau_M>N) \leq \ee^{\epsilon N},\\
\mathrm{II}_x &\leq \e_x\left[\ee^{\epsilon\tau_M} 1_{\{\tau_M \leq N\}} \max_{z = \pm M} 
\e_z\left[\ee^{\cH_{N-\tau_M}(S)}\right] \right]\\
&\leq \ee^{\epsilon N}  \max_{|z| \le M} \max_{0 \leq N' \leq N}  
\e_z\left[\ee^{\cH_{N'}(S)}\right].
\end{aligned}
\end{equation}
Combining \eqref{eq:xsplit} and \eqref{eq:Ixbds}, we obtain
\begin{equation}
Z^{\ann,+}_{N,\beta,h} \leq (1+\ee^{\epsilon N}) \max_{|z| \leq M} \max_{0 \leq N' \leq N}
\e_z\left[\ee^{\cH_{N'}(S)}\right].
\end{equation}
For every $|x|, |y| \leq M$ we have
\begin{equation}
\label{eq:sand}
\e_x\left[\ee^{\cH_{N'}(S)}\right] \geq \Pi_M\,\ee^{-M\|\psi_{\beta,h}\|_\infty}\, 
\e_y\left[\ee^{\cH_{N'}(S)}\right]
\end{equation}
with $\Pi_M = \prod_{x=-M}^{M-1} p_{x,x+1} \wedge \prod_{x=M}^{-M+1} p_{x,x-1}$. Since 
$\Pi_M> 0$, it follows that
\begin{equation}
\label{eq:Zannest1}
Z^{\ann,+}_{N,\beta,h} \leq \ee^{\epsilon N+o(N)} \max_{0 \leq N' \leq N} 
\e_0\left[\ee^{\cH_{N'}(S)}\right], \qquad N\to\infty.
\end{equation}
As shown in Appendix~\ref{app:compareann},  
\begin{equation}
\e_0\left[\ee^{\cH_{N'}(S)}\right] = Z^\ann_{N',\beta,h} 
= \ee^{o(N')} Z^{\ann,0,0}_{N',\beta,h}, \qquad N' \to\infty.
\end{equation}
Since $N \mapsto \log Z^{\ann,0,0}_{N,\beta,h}$ is super-additive, it follows that
\begin{equation}
\label{eq:Zannest2}
\max_{0 \leq N' \leq N} \e_0\left[\ee^{\cH_{N'}(S)}\right]
= \ee^{o(N)} \e_0\left[\ee^{\cH_N(S)}\right], \qquad N \to \infty.
\end{equation}
Combining \eqref{eq:Zannest1} and \eqref{eq:Zannest2}, we obtain
\begin{equation}
Z^{\ann,+}_{N,\beta,h} \leq \ee^{\epsilon N+o(N)} Z^{\ann,0,0}_{N,\beta,h},
\qquad N \to\infty.
\end{equation}
But 
\begin{equation}
Z^{\ann,0,0}_{N,\beta,h} \leq \e_0\left[\ee^{\cH_N(S)}\,1_{\{|S_N| \leq M\}}\right].
\end{equation}
Appealing once more to \eqref{eq:sand}, we arrive at
\begin{equation}
Z^{\ann,+}_{M,N,\beta,h} \leq \ee^{\epsilon N+o(N)} Z^{\ann,-}_{M,N,\beta,h},
\qquad N\to\infty,
\end{equation}
which proves the claim because $\epsilon>0$ is arbitrary.
\end{proof}

\begin{lemma}
\label{lem:Fannaltalt}
For $S,T \geq 0$, $\hat{\beta} \in (0,\infty)$ and $\hat{h} \in \R$, define
\begin{equation}
\begin{aligned}
\hat{Z}^{\ann,-}_{S,T,\hat{\beta},\hat{h}} &:= \min_{|x| \leq S}
\e_x\left[\ee^{\int_0^T dt\,\hat{\psi}_{\hat{\beta},\hat{h}}(X_t)} 1_{\{|X_T| \leq S\}}\right],\\
\hat{Z}^{\ann,+}_{T,\hat{\beta},\hat{h}} &:= \sup_{x\in\R} \max_{0 \leq T' \leq T} 
\e_x\left[\ee^{\int_0^{T'} dt\,\hat{\psi}_{\hat{\beta},\hat{h}}(X_t)}\right],
\end{aligned}
\end{equation}
with $\hat{\psi}_{\hat{\beta},\hat{h}}$ defined in \eqref{eq:psi0def}. Then, for all $\hat{\beta} \in (0,\infty)$ 
and $\hat{h} \in \R$,
\begin{equation}
\lim_{T\to\infty} \frac{1}{T} \log \hat{Z}^{\ann,-}_{S,T,\hat{\beta},\hat{h}}
= \hat{F}^\ann(\hat{\beta},\hat{h}) 
= \lim_{T\to\infty} \frac{1}{T} \log \hat{Z}^{\ann,+}_{T,\hat{\beta},\hat{h}} 
\qquad \forall\,S \in (0,\infty).
\end{equation}
\end{lemma}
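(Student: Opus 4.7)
My plan is to mimic the proof of Lemma~\ref{lem:Fannalt}, with the Bessel process $X$ playing the role of the discrete Markov chain $S$. By applying the Markov property at an intermediate time and restricting paths to lie in $[0,S]$, one shows that $T \mapsto \log \hat{Z}^{\ann,-}_{S,T,\hat\beta,\hat h}$ is super-additive, so the limit $\hat{F}_S := \lim_T \tfrac{1}{T}\log \hat{Z}^{\ann,-}_{S,T}$ exists in $[-\infty,+\infty]$. Since $0 \in [0,S]$, taking $x=0$ in the minimum defining $\hat{Z}^{\ann,-}_{S,T}$ yields the trivial sandwich $\hat{Z}^{\ann,-}_{S,T} \leq \hat{\e}_0[\exp(\int_0^T \hat\psi_{\hat\beta,\hat h}(X_s)\,\mathrm{d}s)] \leq \hat{Z}^{\ann,+}_T$, so $\hat{F}_S \leq \hat{F}^{\ann}(\hat\beta,\hat h) \leq \liminf_T \tfrac{1}{T}\log \hat{Z}^{\ann,+}_T$.

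The central step is to prove that, for every $\epsilon > 0$, there exists $S_0=S_0(\epsilon)$ such that for all $S \geq S_0$,
\[
\hat{Z}^{\ann,+}_T \leq e^{\epsilon T + o(T)}\, \hat{Z}^{\ann,-}_{S,T}, \qquad T \to \infty.
\]
I would choose $S_0$ so large that $|\hat\psi_{\hat\beta,\hat h}(x)| \leq \epsilon$ for $x > S_0$, noting that outside $\{0\}$ the renormalized Dirac $\hat\delta_0$ vanishes and only the polynomial part $|x|^{-\theta}$ (or $|x|^{-2\theta}$) in \eqref{eq:psi0def} survives. For a starting point $x > S$, I would split $\hat{\e}_x[\exp(\int_0^{T'} \hat\psi_{\hat\beta,\hat h}(X_s)\,\mathrm{d}s)]$ at the first hitting time $\tau_S := \inf\{t \geq 0\colon X_t \leq S\}$: on $\{\tau_S > T'\}$ the exponent is deterministically bounded by $\epsilon T'$, while on $\{\tau_S \leq T'\}$ the strong Markov property, combined with $X_{\tau_S} = S$ from path continuity, reduces to starting from $S$. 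This produces
\[
\hat{Z}^{\ann,+}_T \leq e^{\epsilon T + o(T)} \max_{y \in [0,S]}\max_{0 \leq T' \leq T} \hat{\e}_y\Big[\exp\Big(\int_0^{T'} \hat\psi_{\hat\beta,\hat h}(X_s)\,\mathrm{d}s\Big)\Big].
\]
A super-additivity argument analogous to Appendix~\ref{app:compareann} then replaces $\max_{T' \leq T}$ by $T' = T$ at multiplicative cost $e^{o(T)}$, and starting-point and endpoint manipulations based on the strict positivity and continuity of the Bessel transition density $g_t(y,z)$ in \eqref{eq:gtxy} replace $\max_y$ by $\min_{y'}$ and insert the constraint $X_T \leq S$, yielding the desired bound by $\hat{Z}^{\ann,-}_{S,T}$. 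Letting $\epsilon \downarrow 0$ and combining with the first paragraph gives $\hat{F}_S = \hat{F}^{\ann}(\hat\beta,\hat h)$ for $S \geq S_0$, and a further density comparison removes the restriction on $S$.

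The main obstacle is the final comparison of starting points and the insertion of the endpoint constraint. In the discrete setting one exploits a deterministic nearest-neighbour path of bounded length, with cost controlled by uniformly positive transition probabilities and by $\|\psi_{\beta,h}\|_\infty<\infty$. In the continuum, however, $\hat\psi_{\hat\beta,\hat h}$ is not a bounded function: in regimes 2 and 3 it contains a Dirac mass at $0$, handled via the local time $\hat{L}_t(0)$ of~\eqref{eq:hatLdef}, and in regimes 1 and 2 it has a polynomial singularity $|x|^{-\theta}$ at $0$. The required control of the integrand $\exp(\int_0^\delta \hat\psi_{\hat\beta,\hat h}(X_s)\,\mathrm{d}s)$ on short time intervals near the endpoints should follow from moment estimates of the type~\eqref{eq:unibound}--\eqref{eq:hatCdue}, after conditioning on the event that the Bessel path remains in a bounded interval bounded away from $0$, an event of positive probability uniformly in starting and ending points in $[0,S]$ by the density formula~\eqref{eq:gtxy}.
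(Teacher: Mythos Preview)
Your proposal is correct and takes essentially the same approach as the paper, whose entire proof reads ``The proof is similar to that of Lemma~\ref{lem:Fannalt}.'' You have in fact gone further than the paper by explicitly identifying the one point where the continuum adaptation is not routine: the starting-point/endpoint comparison, which in the discrete case uses $\|\psi_{\beta,h}\|_\infty<\infty$ and a deterministic nearest-neighbour path, whereas $\hat\psi_{\hat\beta,\hat h}$ is singular at $0$. Your suggested resolution via moment bounds of the type \eqref{eq:unibound}--\eqref{eq:hatCdue} and positivity of the transition density \eqref{eq:gtxy} is the natural one; note also that in all three regimes the \emph{negative} part of $\int_0^\delta \hat\psi_{\hat\beta,\hat h}(X_s)\,\dd s$ is either identically zero or controlled by $\int_0^\delta X_s^{-\theta}\,\dd s$ with $\theta<2(1-\alpha)$, which has uniformly bounded moments by \eqref{eq:gt}, so the short-time cost is finite even when the path starts at $0$.
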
 

\begin{proof}
The proof is similar to that of Lemma~\ref{lem:Fannalt}.
\end{proof}

What makes Lemma~\ref{lem:Fannalt} useful is that $N \mapsto \log Z^{\ann,-}_{M,N,\beta,h}$
is superadditive for every $M \in \N_0$, while $N \mapsto \log Z^{\ann,+}_{N,\beta,h}$ is subadditive. 
Consequently,
\begin{equation}
\label{eq:hardsand}
\frac{1}{N} \log Z^{\ann,-}_{M,N,\beta,h} \leq F^\ann(\beta,h) 
\leq \frac{1}{N} \log Z^{\ann,+}_{N,\beta,h} \quad \forall\,M,N\in\N.
\end{equation}
Let $A,B$ be the pair of exponents appearing in part (a) of Theorems~\ref{thm:weak1}--\ref{thm:weak3}, 
i.e.,
\begin{equation}
(A,B) = \left\{\begin{array}{ll}
((1-\theta)/2,(2-\theta)/2), &\theta \in (0,1-\alpha),\\[0.1cm] 
(\alpha/2,(2-\theta)/2), &\theta \in (1-\alpha,2(1-\alpha)),\\[0.1cm] 
(\alpha/2,\alpha), &\theta \in (2(1-\alpha),\infty).
\end{array}
\right.
\end{equation}
Fix $T,S \in (0,\infty)$, in \eqref{eq:hardsand} replace $N$ by $TN$, and pick $M=S\sqrt{TN}$, 
$\beta=\hat{\beta}N^{-A}$, $h=\hat{h}N^{-B}$, to obtain
\begin{equation}
\begin{aligned}
&\frac{1}{T} \log Z^{\ann,-}_{S\sqrt{TN},TN,\hat{\beta}N^{-A},\hat{h}N^{-B}}
\leq N F^\ann(\hat{\beta}N^{-A},\hat{h}N^{-B})
\leq \frac{1}{T} \log Z^{\ann,+}_{TN,\hat{\beta}N^{-A},\hat{h}N^{-B}}\\ 
&\forall\,N\in\N,\,\,T,S \in (0,\infty).
\end{aligned}
\end{equation}
Let $N\to\infty$ to obtain
\begin{equation}
\label{eq:partlimN}
\begin{aligned}
&\frac{1}{T} \log \hat{Z}^{\ann,-}_{S,T,\hat{\beta},\hat{h}}
\leq \liminf_{N\to\infty} N F^\ann(\hat{\beta}N^{-A},\hat{h}N^{-B})\\
&\qquad \leq \limsup_{N\to\infty} N F^\ann(\hat{\beta}N^{-A},\hat{h}N^{-B})
\leq \frac{1}{T} \log \hat{Z}^{\ann,+}_{T,\hat{\beta},\hat{h}}
\qquad \forall\,T,S \in (0,\infty).
\end{aligned}
\end{equation}
Here, the fact that 
\begin{equation}
\begin{aligned}
\hat{Z}^{\ann,-}_{S,T,\hat{\beta},\hat{h}} 
&= \lim_{N\to\infty} Z^{\ann,-}_{S\sqrt{TN},TN,\hat{\beta}N^{-A},\hat{h}N^{-B}},\\
\hat{Z}^{\ann,+}_{T,\hat{\beta},\hat{h}}
&= \lim_{N\to\infty} Z^{\ann,+}_{TN,\hat{\beta}N^{-A},\hat{h}N^{-B}},
\end{aligned}
\end{equation}
follows from the same argument as used in Section~\ref{s:annealed} to prove that
\begin{equation}
\lim_{N\to\infty} Z^{\ann}_{TN,\hat{\beta}N^{-A},\hat{h}N^{-B}}
= \hat{Z}^{\ann}_{T,\hat{\beta},\hat{h}}.
\end{equation}
In particular, the scaling $M=S\sqrt{TN}$ fits well with the invariance principle in \eqref{eq:invprin}. 
Finally, let $T\to\infty$ in \eqref{eq:partlimN} and use Lemma~\ref{lem:Fannaltalt}, to obtain
\begin{equation}
\lim_{N\to\infty} N F^\ann(\hat{\beta}N^{-A},\hat{h}N^{-B}) 
= \hat{F}^\ann(\hat{\beta},\hat{h}). 
\end{equation}


\section{Properties of the annealed scaling limit}
\label{appB}

In this appendix we show that the annealed partition functions and the corresponding annealed free 
energies encountered in Theorems~\ref{thm:weak1}--\ref{thm:weak3} are finite in each of the three 
regimes. We also give an explicit characterization of the annealed free energy and the annealed 
critical curve in the regime where $\theta \in (2(1-\alpha),\infty)$. 


\subsection{Finite free energies for the Bessel process}
\label{app:finpartcont}

For $\mu,T,\gamma \in [0,\infty)$, define 
\begin{equation}
\hat{Z}_{\mu,T} := \hat{\e}_0\left[\exp\left(\mu \int_0^T \dd t\,X_t^{-\gamma}\right)\right] ,
\qquad \tilde{Z}_{\mu,T} := \hat{\e}_0\big[\exp\big(\mu \hat{L}_T(0)\big)\big] \,.
\end{equation}
We show that, for $0<\gamma < 2(1-\alpha)$, these quantities grow at most exponentially as 
$T \to \infty$:
\begin{equation}
\forall\,\mu \in [0,\infty), \, \forall\,0< \gamma < 2(1-\alpha): \quad
\limsup_{T\to\infty} \frac{1}{T} \, \log \, \hat{Z}_{\mu,T} < \infty, \quad
\limsup_{T\to\infty} \frac{1}{T} \, \log \, \tilde{Z}_{\mu,T} < \infty.
\end{equation}
By Cauchy-Schwarz, this implies that all the free energies in Theorems~\ref{thm:weak1}--\ref{thm:weak3} 
are finite.

We first focus on $\hat{Z}_{\mu,T}$, which we rewrite as
\begin{equation}
\hat{Z}_{\mu,T} = 1 + \sum_{k\in\N} \mu^k \hat{C}_{k,T}
\end{equation}
with
\begin{equation}
\hat{C}_{k,T} := \int_{0 \leq t_1<\cdots<t_k\leq T} \dd t_1 \cdots \dd t_k\,\,
\hat{\e}_0\left[\prod_{\ell=1}^k X_{t_\ell}^{-\gamma}\right].
\end{equation}
We use the Markov property at times $t_1,\ldots,t_k$ to estimate
\begin{equation}
\begin{aligned}
\hat{\e}_0\left[\prod_{\ell=1}^k X_{t_\ell}^{-\gamma}\right]
&= \int_{0 \leq x_1,\ldots,x_k < \infty}
\prod_{\ell=1}^k \hat{\p}_{x_{\ell-1}}(X_{t_\ell-t_{\ell-1}} \in \dd x_\ell)\,x_\ell^{-\gamma}\\
&\leq \int_{0 \leq x_1,\ldots,x_k < \infty}
\prod_{\ell=1}^k \hat{\p}_0(X_{t_\ell-t_{\ell-1}} \in \dd x_\ell)\,x_\ell^{-\gamma}\\
\end{aligned} 
\end{equation}
where the inequality holds because $\hat{\p}_0(X_t \in \cdot)$ stochastically dominates 
$\hat{\p}_x(X_t \in \cdot)$ for any $x \ge 0$ (by a standard coupling argument and the 
fact that $X$ is a Markov process with continuous paths) and because $x \mapsto x^{-\gamma}$ 
is non-increasing. We thus obtain
\begin{equation}
\hat{C}_{k,T} \leq \int_{0 \leq t_1<\cdots<t_k\leq T} \dd t_1 \cdots \dd t_k\,\,
\prod_{\ell=1}^k \hat{\e}_0\left[X_{t_\ell-t_{\ell-1}}^{-\gamma}\right] 
\end{equation}
with $t_0=0$. By diffusive scaling we have $\hat{\e}_0[X_t^{-\gamma}] = t^{-\gamma/2}C$, with 
$C=\hat{\e}_0[X_1^{-\gamma}]<\infty$ for $0<\gamma<2(1-\alpha)$ (recall \eqref{eq:gt}). 
The change of variables $t_k = T s_k$ yields
\begin{equation}
\label{eq:hatZsumbd}
\hat{Z}_{\mu,T} \leq 1 + \sum_{k\in\N} (\mu C \, T^{1-\gamma/2})^k\, I_{k}(\gamma/2)
\end{equation}
where, for $\theta \in (0, 1)$,
\begin{equation}
\label{eq:Ikdef}
\begin{split}
I_{k}(\theta) &:= \int_{0 \leq s_1 < \cdots < s_k \leq 1} \dd s_1 \cdots \dd s_k\,\,
\prod_{\ell=1}^k (s_\ell - s_{\ell-1})^{-\theta}
\end{split}
\end{equation}
with $s_0=0$. Since 
\begin{equation}\label{eq:Dirichlet}
\int_{0 \le u_1 < \ldots < u_{k-1} \le 1} \dd u_1\cdots \dd u_k\,\, 
\prod_{\ell=1}^k (u_\ell - u_{\ell-1})^{-\theta}
= \frac{\Gamma(1-\theta)^k}{\Gamma(k(1-\theta))}
\end{equation}
with $u_0=1$ and $u_k=1$ is the normalization of the Dirichlet distribution, after setting $s_i = s_k \, u_i$ 
for $i=1,\ldots, k-1$ in \eqref{eq:Ikdef} we obtain
\begin{equation}
\label{eq:Ikasymp}
\begin{split}
I_{k} (\theta)
& = \int_{0 \le s_k \le 1} \dd s_k \, s_k^{k(1-\theta)-1} \,
\frac{\Gamma(1-\theta)^k}{\Gamma(k(1-\theta))}
= \frac{\Gamma(1-\theta)^k}{\Gamma(k(1-\theta)+1)} \\
& \le \Gamma(1-\theta)^k \, \exp\Big(- (1-\theta)k \, \big\{\log [(1-\theta)k] - 1\big\}\Big),
\end{split}
\end{equation}
where we have used Stirling's 
bound $\Gamma(x+1) \ge \ee^{x (\log x-1)}$. Substitute $\theta = \gamma/2$ 
and set
\begin{equation}
A := (1-\gamma/2), \quad B := \log \big( \mu \, C \, \Gamma(1-\gamma/2) \big),
\end{equation}
to obtain
\begin{equation} 
\label{eq:finalsum}
\begin{split}
\hat{Z}_{\mu,T} & \leq 1 + \sum_{k\in\N} 
\exp \Big( - Ak \big[ \log \big( A k \big) - 1 \big] + Ak \log T + Bk \Big), \\
& = 1 + \sum_{k\in\N} 
\exp \Big( T \cdot \tfrac{k}{T} \Big\{- A \big[ \log \big( A \tfrac{k}{T} \big) - 1 \big] + B \Big\} \Big).
\end{split}
\end{equation}
We now set $x := k/T$ and note by direct computation that, with $\bar x := A^{-1} \ee^{B/A}$,
\begin{equation}
\sup_{x \in [0,\infty)} x \{-A \big[ \log (A x) - 1 \big] + B\}
= \bar x \{-A \big[ \log (A \bar x) - 1 \big] + B\} = \ee^{B/A}.
\end{equation}
Therefore the leading contribution to the sum in \eqref{eq:finalsum} is given by $k \approx \bar x T$
(more precisely, the sum restricted to $0 \le k < 2 \bar x T$ is at most $ 2 \bar x \, T \, \exp(T \, \ee^{B/A})$, 
while the contribution of the remaining terms with $k \ge 2\bar x T$ is negligible). It follows that
\begin{equation}
\label{eq:finalhatZ}
\limsup_{T\to\infty} \frac{1}{T} \, \log \hat{Z}_{\mu,T}
\le \ee^{B/A} = \big[ \mu \, C \, \Gamma(1-\gamma/2) \big]^{\frac{1}{1-\gamma/2}} < \infty.
\end{equation}

We next focus on $\tilde{Z}_{\mu,T}$, which equals  
\begin{equation}
\tilde{Z}_{\mu,T} = 1 + \sum_{k\in\N} \mu^k \bar{C}_{k,T}
\end{equation}
with
\begin{equation}
\label{eq:CkT}
\bar{C}_{k,T} := \frac{1}{k!} \hat{\e}\left[ \hat{L}_T(0)^k \right].
\end{equation}
We recall from \eqref{eq:hatLdef} that we can write $\hat{L}_T(0) = \lim_{\epsilon \downarrow 0} 
\hat{L}_T^\epsilon(0)$ in probability, where
\begin{equation}
\hat{L}_T^\epsilon(0) := \frac{c_\alpha}{\epsilon^{2(1-\alpha)}}
\int_0^T \dd s\,1_{\{X_s \in (0,\epsilon)\}} , \qquad
\text{with} \qquad c_\alpha := \frac{\Gamma(2-\alpha)}{2^{\alpha-1}}.
\end{equation}
Let us focus on $\hat{L}_T^\epsilon(0)$ for a moment. By explicit computation, for any $k\in\N$
(with $t_0 := 0$)
\begin{equation} \label{eq:ineqin}
\begin{split}
\frac{1}{k!} \, \hat{\e} \left[ \hat{L}_T^\epsilon(0)^k \right]
& = \frac{c_\alpha^k}{\epsilon^{2(1-\alpha)k} } \,
\int_{0 \leq t_1<\cdots<t_k\leq T} \dd t_1 \cdots \dd t_k\,\,
\hat{\p}_0\bigg( \bigcap_{\ell=1}^k \{ X_{t_\ell} \in (0,\epsilon) \} \bigg) \\
& \le \frac{c_\alpha^k}{\epsilon^{2(1-\alpha)k} } \,
\int_{0 \leq t_1<\cdots<t_k\leq T} \dd t_1 \cdots \dd t_k\,\,
\prod_{\ell=1}^k \hat{\p}_0\big( X_{t_\ell - t_{\ell-1}} \in (0,\epsilon) \big) \\
&\le \int_{0 \leq t_1<\cdots<t_k\leq T} \dd t_1 \cdots \dd t_k\,\,
\prod_{\ell=1}^k (t_\ell - t_{\ell-1})^{-(1-\alpha)} 
=  T^\alpha \, I_k(1-\alpha),
\end{split}
\end{equation}
where the first inequality holds because $\hat{\p}_0(X_t \in \cdot)$ stochastically dominates 
$\hat{\p}_x(X_t \in \cdot)$ for any $x \ge 0$, while the last inequality holds by \eqref{eq:PXepsilon} 
and the definition \eqref{eq:Ikdef} of $I_k(\cdot)$. This shows that, as $\epsilon \downarrow 0$,
\emph{$\hat{L}_T^\epsilon(0)$ is uniformly bounded in $L^k$, for any $k\in\N$}. By uniform integrability, 
we can therefore exchange $\lim_{\epsilon \downarrow 0}$ and $\hat{\e}$ to get, recalling \eqref{eq:CkT},
\begin{equation} 
\label{eq:CkT2}
\bar{C}_{k,T} = \lim_{\epsilon\downarrow 0} \, \frac{1}{k!} 
\hat{\e}\left[ \hat{L}_T^\epsilon(0)^k \right]
= T^\alpha \, I_k(1-\alpha),
\end{equation}
where the last equality holds because the inequality in \eqref{eq:ineqin} becomes sharp as $\epsilon \downarrow 0$.
Hence
\begin{equation}
\label{eq:ZmuT}
\begin{split}
\tilde{Z}_{\mu,T}
&= 1 + \sum_{k\in\N} \mu^k \int_{0 \leq t_1<\cdots<t_k\leq T} \dd t_1 \cdots \dd t_k\,\,
\prod_{\ell=1}^k (t_\ell - t_{\ell-1})^{-(1-\alpha)} \\
& = 1 + \sum_{k\in\N} (\mu T^\alpha)^k I_{k}(1-\alpha).
\end{split}
\end{equation}
The steps in \eqref{eq:Ikasymp}-\eqref{eq:finalhatZ} (with $\theta = 1-\alpha$ instead of $\theta=\gamma/2$) 
show that not only $\tilde{Z}_{\mu,T}<\infty$ for all $\mu,T$, but also $\lim_{T\to\infty} \frac{1}{T} \log 
\tilde{Z}_{\mu,T} < \infty$ for all $\mu$.


\subsection{Formula for the annealed free energy}
\label{app:annfeexplicit}

To compute the annealed free energy $\hat{F}(\hat{\beta},\hat{h})$ in the regime $\theta \in (2(1-\alpha),\infty)$, 
we use the first line in \eqref{eq:ZmuT} to compute the Laplace transform of  $\tilde{Z}_{\mu,T}$. Writing 
$\ee^{-\lambda T} = (\prod_{\ell=1}^k \ee^{-\lambda (t_\ell - t_{\ell-1})})\, \ee^{-\lambda(T-t_k)}$ for $\lambda \ge 0$, 
we get
\begin{equation}
\begin{split}
\int_0^\infty \dd T\, \lambda \, \ee^{-\lambda T} \tilde{Z}_{\mu,T}
& = 1 + \sum_{k\in\N} \bigg( \mu \int_0^\infty t^{\alpha-1} \, e^{-\lambda t} \, \dd t \bigg)^k 
= \frac{1}{1- \mu \, \lambda^{-\alpha} \, \Gamma(\alpha)}.
\end{split}
\end{equation}
Hence
\begin{equation}
F(\mu) = \lim_{T\to\infty} \frac{1}{T} \log \tilde{Z}_{\mu,T}  
= \begin{cases} 
(\mu \Gamma(\alpha))^{1/\alpha}, &\mu \ge 0, \\
0, &\mu < 0,
\end{cases}
\end{equation}
which proves \eqref{eq:Fannid}.


\section{Localization criterion for the annealed model}
\label{appC}

In this appendix we take a closer look at the criterion in \eqref{eq:condloc} and show that it does not 
depend on the starting point of the walk. For the purpose of this appendix, let $S = (S_n)_{n\in\N_0}$  
be any recurrent Markov chain on a countable space $E$, and let $\psi\colon\, E \to \R$ be an arbitrary 
function. Denote by $\tau^x := \min\{n\in\N\colon\, S_n = x\}$ the first return time of $S$ to $x$. Define
\begin{equation} 
\label{eq:condlocapp}
A_x := \sum_{m\in\N} \e_x \bigg[ \ee^{\sum_{n=1}^{m} \psi(S_n)} \, 
\ind_{\{\tau^x = m\}} \bigg] \in (0, \infty].
\end{equation}
We will prove the following property:
\begin{equation} 
\label{eq:AxAy}
\forall x, y\colon \qquad A_x > 1 \quad \iff \quad A_y > 1.
\end{equation}

It is convenient to introduce the following shorthand notation, for (possibly random) $\sigma \in \N$:
\begin{equation}
\cH(\sigma) := \sum_{n=1}^\sigma \psi(S_n),
\end{equation}
so that we may simply write $A_x = \e_x [\ee^{\cH(\tau^x)}]$. Given an arbitrary $y$, we can split this 
expected value according to the two complementary events $\{\tau^x < \tau^y\}$ and $\{\tau^y < \tau^x\}$: 
\begin{equation}
A_x = \e_x \big[ \ee^{\cH(\tau^x)} \, \ind_{\{\tau^x < \tau^y\}} \big]
+ \e_x \big[ \ee^{\cH(\tau^x)} \, \ind_{\{\tau^y < \tau^x\}} \big].
\end{equation}
The second term can be expanded by summing over all visits of $S$ to $y$ that precede the first return to 
$x$. If we define
\begin{equation}
B_{xy} := \e_x \big[ \ee^{\cH(\tau^x)} \, \ind_{\{\tau^x < \tau^y\}} \big], \qquad
C_{xy} := \e_x \big[ \ee^{\cH(\tau^y)} \, \ind_{\{\tau^y < \tau^x\}} \big],
\end{equation}
then by the strong Markov property we get
\begin{equation}
A_x = B_{xy} + \sum_{\ell=0}^\infty C_{xy} \, (B_{yx})^\ell \, C_{yx}
= B_{xy} + \frac{C_{xy} \, C_{yx}}{1-B_{yx}},
\end{equation}
\emph{with the convention that $A_x = \infty$ if $B_{yx} \ge 1$}. Exchanging the roles of $x$ and $y$,
we get
\begin{equation}
A_y = B_{yx} + \frac{C_{xy} \, C_{yx}}{1-B_{xy}}.
\end{equation}

We are now ready to prove \eqref{eq:AxAy}. Fix $x,y$. We show that if $A_x \le 1$, then also $A_y 
\le 1$. To simplify the notation, we abbreviate $b := B_{xy}$, $b' := B_{yx}$ and $c := C_{xy} C_{yx}$, so 
that
\begin{equation}
A_x = b + \frac{c}{1-b'}, \qquad A_y = b' + \frac{c}{1-b},
\end{equation}
with the convention that the ratios equal $\infty$ if $b' \ge 1$, respectively, $b \ge 1$. Assume that 
$A_x \le 1$. Then we must have $b' < 1$ and the formula $A_x = b + \frac{c}{1-b'}$ applies, which shows 
that also $b < 1$ (because $c > 0$). Hence we can write
\begin{equation}
A_y = b' + \frac{c}{1-b} = b' + \frac{1-b'}{1-b} \frac{c}{1-b'}
= b' + \frac{1-b'}{1-b}(A_x-b) \le b' + \frac{1-b'}{1-b} (1-b) = 1,
\end{equation}
i.e., $A_y \le 1$.


\end{document}